\newcommand{\C}{\mathbb{C}}               
\newcommand{\R}{\mathbb{R}}               
\newcommand{\N}{\mathbb{N}}                
\renewcommand{\Re}{\mathrm{Re}\,}          
\renewcommand{\Im}{\mathrm{Im}\,}          
\newcommand{\A}{\mathcal{A}}             
\newcommand{\diag}{\mathrm{diag}}          
\newcommand{\PL}{\mathcal{P}}
\newcommand{\kernel}{\mathcal{N}}
\newcommand{\range}{\mathcal{R}}
\newcommand{\begriff}[1]{\text{#1}}
\renewcommand{\theequation}{\arabic{section}.\arabic{equation}}
\renewcommand{\@secnumfont}{\bfseries}
  \def\section{\@startsection{section}{1}%
    \z@{.7\linespacing\@plus\linespacing}{.5\linespacing}%
    {\normalfont\LARGE\bfseries}}
\def\@seccntformat#1{%
  \protect\textup{%
    \protect\@secnumfont
    \expandafter\protect\csname format#1\endcsname 
    \csname the#1\endcsname
    \protect\@secnumpunct
  }%
}
\newcommand{\sect}
{
  \setcounter{equation}{0}
  \setcounter{figure}{0}
  \section
}
\theoremstyle{plain}
\newtheorem{definition}{Definition}[section]
\newtheorem{theorem}[definition]{Theorem}
\newtheorem{lemma}[definition]{Lemma}
\newtheorem{assumption}[definition]{Assumption}
\newtheorem{remark}[definition]{Remark}
\theoremstyle{definition}
\newtheorem{example}[definition]{Example}
\newtheorem{proposition}[definition]{Proposition}
\begin{document}
\title[Computation and Stability of Traveling Waves\\in Second Order Evolution Equations]{Computation and Stability of Traveling Waves\\in Second Order Evolution Equations}
\setlength{\parindent}{0pt}
\begin{center}
\normalfont\huge\bfseries{\shorttitle}\\
\vspace*{0.25cm}
\end{center}

\vspace*{0.8cm}
\noindent
\begin{minipage}[t]{0.99\textwidth}
\begin{minipage}[t]{0.48\textwidth}
\hspace*{1.8cm} 
\textbf{Wolf-J{\"u}rgen Beyn}\footnotemark[1]${}^{,}$\footnotemark[4] \\
\hspace*{1.8cm}
\textbf{Denny Otten}\footnotemark[2]${}^{,}$\footnotemark[4] \\
\hspace*{1.8cm}
Department of Mathematics \\
\hspace*{1.8cm}
Bielefeld University \\
\hspace*{1.8cm}
33501 Bielefeld \\
\hspace*{1.8cm}
Germany
\end{minipage}
\begin{minipage}[t]{0.48\textwidth}
\hspace*{1.8cm}
\textbf{Jens Rottmann-Matthes}\footnotemark[3]${}^{,}$\footnotemark[5] \\
\hspace*{1.8cm}
Institut für Analysis \\
\hspace*{1.8cm}
Karlsruhe Institute of Technology \\
\hspace*{1.8cm}
76131 Karlsruhe \\
\hspace*{1.8cm}
Germany
\end{minipage}
\end{minipage}\\

\footnotetext[1]{e-mail: \textcolor{blue}{beyn@math.uni-bielefeld.de}, phone: \textcolor{blue}{+49 (0)521 106 4798}, \\
                                          fax: \textcolor{blue}{+49 (0)521 106 6498}, homepage: \url{http://www.math.uni-bielefeld.de/~beyn/AG\_Numerik/}.}
\footnotetext[2]{e-mail: \textcolor{blue}{dotten@math.uni-bielefeld.de}, phone: \textcolor{blue}{+49 (0)521 106 4784}, \\
                                 fax: \textcolor{blue}{+49 (0)521 106 6498}, homepage: \url{http://www.math.uni-bielefeld.de/~dotten/}.}
\footnotetext[3]{e-mail: \textcolor{blue}{jens.rottmann-matthes@kit.edu}, phone: \textcolor{blue}{+49 (0)721 608 41632}, \\
                                          fax: \textcolor{blue}{+49 (0)721 608 46530}, homepage: \url{http://www.math.kit.edu/iana2/~rottmann/}.}
\footnotetext[4]{supported by CRC 701 'Spectral Structures and Topological Methods in Mathematics',  Bielefeld University}
\footnotetext[5]{supported by CRC 1173 'Wave Phenomena: Analysis and Numerics', Karlsruhe Institute of Technology}
\vspace*{0.6cm}
\noindent
\hspace*{6.2cm}
Date: \today
\normalparindent=12pt

\vspace{0.4cm}
\begin{center}
\begin{minipage}{0.9\textwidth}
  {\small
  \textbf{Abstract.} 
  The topic of this paper are nonlinear traveling waves occuring in a system
of damped waves equations in one space variable. We extend the freezing method from first to second order equations in time. When
applied to a Cauchy problem, this method generates a comoving frame
in which the solution becomes stationary. In addition it generates an algebraic 
variable which converges to the speed of the wave, provided the original
wave satisfies certain spectral conditions  and initial 
perturbations are sufficiently small. We develop a rigorous theory 
for this effect by recourse to some recent nonlinear stability
results for waves in first order hyperbolic systems. Numerical
computations illustrate the theory for examples of Nagumo and FitzHugh-Nagumo type.
  }
\end{minipage}
\end{center}

\noindent
\textbf{Key words.} Systems of damped wave equations, traveling waves, nonlinear stability, freezing method, second order evolution equations, point spectra and essential spectra.

\noindent
\textbf{AMS subject classification.} 65P40, 35L52, 47A25 (35B35, 35P30, 37C80). 


%
%
\sect{Introduction}
\label{sec:1}
In this paper we study the numerical computation and stability of
traveling waves in second order evolution equations. Our model system 
is a nonlinear wave equation in one space dimension
\begin{equation}
  \begin{aligned}
    \label{equ:2.1}
      Mu_{tt}= Au_{xx} + f(u,u_x,u_t),\,x\in\R,\,t\geqslant 0, u(x,t) \in \R^m.
  \end{aligned}
\end{equation}
Here we use constant matrices $A,M \in \R^{m,m}$ and a
sufficiently smooth nonlinearity $f:\R^{3m}\rightarrow\R^m$. In the numerical
computations we have the simpler case where $f$ is linear in $u_x$ and $u_t$,
i.e.
\begin{equation}
\begin{aligned}
  \label{equ:2.1a}
  f(u,v,w)= g(u)+Cv-Bw, \quad B,C \in \R^{m,m},\; g:\R^m \rightarrow \R^m \;
  \text{smooth},
  \end{aligned}
  \end{equation}
  and $B$ plays the role of a damping matrix.
We also
require $M$ to be invertible and $M^{-1}A$ to be real diagonalizable
with positive eigenvalues (positive diagonalizable for short). This 
ensures that the principal part of equation
\eqref{equ:2.1} is well-posed.

Our main concern are \begriff{traveling wave} solutions
$u_{\star}:\R\times[0,\infty)\rightarrow\R^m$ of \eqref{equ:2.1}, i.e.
\begin{equation}
  \begin{aligned}
  \label{equ:2.2}
    u_{\star}(x,t) = v_{\star}(x-\mu_{\star}t),\;x\in\R,\,t\geqslant 0,
  \end{aligned}
\end{equation}
such that
\begin{equation}
  \label{equ:2.2.1}
  \lim_{\xi\to\pm\infty}v_{\star}(\xi)=v_{\pm}\in\R^m\quad\text{and}\quad
  f(v_{\pm},0,0)=0.
\end{equation}
Here $v_{\star}:\R\rightarrow\R^m$ is a non-constant function
and denotes the \begriff{profile} (or \begriff{pattern}) of the wave,
$\mu_{\star}\in\R$ its \begriff{translational velocity} and $v_{\pm}$
its \begriff{asymptotic states}. The quantities $v_{\star}$ and
$\mu_{\star}$ are generally unknown, explicit formulas are only
availabe for very specific equations.  As usual, a traveling wave
$u_{\star}$ is called a \begriff{traveling pulse} if $v_{+}=v_{-}$,
and a \begriff{traveling front} if $v_{+}\neq v_{-}$.

We have two main aims for this paper. First, we want to determine
traveling wave solutions of \eqref{equ:2.1} from second order boundary
value problems and investigate their stability for the time-dependent
problem.  Second, we will generalize the method of freezing solutions
of the Cauchy problem associated with \eqref{equ:2.1}, from first
order to second order equations in time
(cf.~\cite{BeynOttenRottmannMatthes2013,BeynThuemmler2004}).
The idea for approximating the traveling wave $u_{\star}$ is to
determine the profile $v_{\star}$ and the velocity $\mu_{\star}$
simultaneously. For this purpose, let us transform \eqref{equ:2.1} via
$u(x,t)=v(\xi,t)$ with $\xi:=x-\mu_{\star}t$ into a co-moving frame
\begin{equation}
  \begin{aligned}
    \label{equ:2.3}
    Mv_{tt} = (A-\mu_{\star}^2 M)v_{\xi\xi} + 2\mu_{\star}Mv_{\xi
      t} + f(v,v_{\xi},v_t-\mu_{\star}v_{\xi}),\;\xi\in\R,\,t\geqslant 0.
  \end{aligned}
\end{equation}
Inserting \eqref{equ:2.2} into \eqref{equ:2.1} shows, that $v_{\star}$
is a stationary solution of \eqref{equ:2.3}, meaning that $v_{\star}$
solves the traveling wave equation
\begin{equation}
  \begin{aligned}
  \label{equ:2.4}
  0 = (A-\mu_{\star}^2 M)v_{\star,\xi\xi}(\xi) +
  f(v_{\star}(\xi),v_{\star,\xi}(\xi),-\mu_{\star} v_{\star,\xi}(\xi)),\;\xi\in\R.
  \end{aligned}
\end{equation}
There are basically two different ways of
determining the profile $v_{\star}$ and the velocity $\mu_{\star}$
from the equations above. In the first approach one solves
\eqref{equ:2.4} as a boundary value problem for
$v_{\star},\mu_{\star}$ by truncating to a finite interval and using
asymptotic boundary conditions as well as a scalar phase condition 
(see \cite{BeynThuemmler2007} for a survey). This method requires 
rather good initial approximations, but has the advantage of being 
applicable to unstable waves as well. The second approach is through 
simulation of \eqref{equ:2.1} via the freezing method which transforms 
the orginal PDE \eqref{equ:2.1} into a partial differential algebraic 
equation (PDAE). Its solutions converge to the unknown profile and the 
unknown velocity simultaneously, provided the initial data lie in the 
domain of attraction of a stable profile.  In Section~\ref{subsec:2.1} 
below we will investigate this approach in more detail. For the numerical
examples we will employ and specify a well known relation of
traveling waves for the hyperbolic system \eqref{equ:2.1}, \eqref{equ:2.1a} to those of
a parabolic system, cf. \cite{GallayRaugel1997,Hadeler1988}
and Section~\ref{subsec:2.2}.

We are also interested in nonlinear stability of traveling waves.
Some far-reaching  global stability results for scalar
damped wave equations have been proved in \cite{GarrayJoly2009,GallayRaugel1997}. Here we consider
local stability only.
For a certain class of first-order evolution equations it
is well-known, that spectral stability implies nonlinear stability,
see \cite{Sandstede2002}, for example. Spectral stability of a traveling wave  refers to the spectrum of the operator obtained
by linearizing about the profile in the co-moving frame. In the case \eqref{equ:2.1}
the  linearization of \eqref{equ:2.3} at 
the wave profile $v_{\star}$ reads
\begin{equation}
  \begin{aligned}
  \label{equ:2.5}
    Mv_{tt}- (A-\mu_{\star}^2 M)v_{\xi\xi} - 2\mu_{\star}Mv_{\xi t}  
    +(\mu_{\star} D_3f(\star)-D_2f(\star))v_{\xi} -D_3f(\star)v_t - D_1f(\star)v= 0,
  \end{aligned}
\end{equation}
where arguments are abbreviated by $(\star)=(v_{\star},v_{\star,\xi}, - \mu_{\star}v_{\star,\xi})$.
Applying separation of variables (or Laplace transform) to  \eqref{equ:2.5} via $v(\xi,t)=e^{\lambda t}w(\xi)$  leads us to the following \begriff{quadratic eigenvalue problem}
\begin{equation}
  \begin{aligned}
  \label{equ:2.6}
    \PL(\lambda)w = \left[\lambda^2 M + \lambda\left(-D_3f(\star) - 2\mu_{\star}M\partial_{\xi}\right) -(A-\mu_{\star}^2 M)\partial_{\xi}^2 
    + (\mu_{\star}D_3f(\star)-D_2f(\star))\partial_{\xi} -
    D_1f(\star)\right]w=0,
  \end{aligned}
\end{equation}
for the eigenfunction $w:\R\rightarrow\C^m$ and its
associated eigenvalue  $\lambda\in\C$ of $\PL$.
As usual $\PL$ has the eigenvalue zero with associated
eigenfunction $v_{\star,\xi}$ due to shift equivariance.
If one requires this eigenvalue to be simple and all other
parts of the spectrum, both essential and point spectrum, to
be strictly to the left of the imaginary axis, then one 
expects the traveling wave to be locally stable with asymptotic phase. This expectation will be confirmed in Section~\ref{sec:3} 
by transforming to a first order hyperbolic system and using the
extensive stability theory developed in  
\cite{RottmannMatthes2010,RottmannMatthes2012a}.
We will also transform the freezing approach and
the spectral problem to the first order formulation. In this
way we obtain a justification of the freezing approach,
showing that the equilibrium $(v_{\star},\mu_{\star})$ of the freezing
PDAE will be stable in the classical Lyapunov sense (w.r.t.\
appropriate norms) provided the conditions on spectral stability
above are satisfied.

Section~\ref{sec:4} is devoted to the study of the spectrum of the
operator $\PL$ from~\eqref{equ:2.6}.  While there is always the zero
eigenvalue present, further isolated eigenvalues in the
point spectrum are often determined by numerical computations (see
\cite{blr14} and the references therein for a variety of
approaches). The essential spectrum can be analyzed by replacing
$v_\star$ in $\PL$ by its limits 
$v_{\pm}$ and the
operator $\partial_{\xi}$ by its Fourier symbol $i
\omega, \omega \in \R$.  The essential spectrum then contains all
values $\lambda \in \C$ satisfying the dispersion relation
\begin{equation}
\label{equ:1.7}
\det\left(\lambda^2 M + \lambda(-D_3f(\pm)-2i \omega\mu_{\star}M)
+\omega^2(A-\mu_{\star}^2M)+i\omega (\mu_{\star}D_3f(\pm)-D_2f(\pm))-
D_1f(\pm)\right)=0
\end{equation}
for some $\omega \in \R$, where the argument is now $(\pm)=(v_{\pm},0,0)$. In Section~\ref{sec:4} we investigate
the shape of these algebraic curves for two examples: a scalar equation with a nonlinearity of Nagumo type and a system of dimension
two with nonlinearity of FitzHugh-Nagumo type.
These examples will also be used for illustrating the effect
of the freezing method from Section~\ref{sec:2} when applied to
 the second order system~\eqref{equ:2.1}.

%
%
\sect{Freezing traveling waves in damped wave equations}
\label{sec:2}
In this section we extend the freezing method
(\cite{BeynOttenRottmannMatthes2013,BeynThuemmler2004}) from
first to second order evolution equations for the case of
translational equivariance.  A generalization to several space
dimensions and more general symmetries is discussed in
\cite{BeynOttenRottmannMatthes2016}.

\subsection{Derivation of the partial differential algebraic equation (PDAE)}
\label{subsec:2.1}
Consider the Cauchy problem associated with \eqref{equ:2.1}
\begin{subequations} 
  \label{equ:2.7}
  \begin{align}
    & Mu_{tt}= Au_{xx} +f(u,u_x,u_t),
    &&x\in\R,\,t\ge 0, \label{equ:2.7a} \\
    & u(\cdot,0) = u_0,\quad u_t(\cdot,0) = v_0,
    &&x\in\R,\,t=0, \label{equ:2.7b}
  \end{align}
\end{subequations}
for some initial data $u_0,v_0:\R\rightarrow\R^m$.  Introducing new
unknowns $\gamma(t)\in\R$ and $v(\xi,t)\in\R^m$ via the
\begriff{freezing ansatz}
\begin{equation}
  \begin{aligned}
  \label{equ:2.8}
  u(x,t) & = v(\xi,t),\quad\xi:=x-\gamma(t),\quad x\in\R,\,t\geqslant
  0,
  \end{aligned}
\end{equation}
we obtain (suppressing arguments)
\begin{align} \label{equ:2.9} u_t &= -\gamma_t v_{\xi} + v_t, \quad
  u_{tt} = -\gamma_{tt} v_{\xi} + \gamma_t^2 v_{\xi\xi} - 2 \gamma_t
  v_{\xi t} + v_{tt}.
\end{align}
Inserting this into \eqref{equ:2.7a} leads to the equation
\begin{equation}
  \begin{aligned}
  \label{equ:2.11}
  Mv_{tt}  & = (A-\gamma_t^2 M)v_{\xi\xi} + 2\gamma_t Mv_{\xi t}
  + \gamma_{tt} M v_{\xi} +f(v,v_{\xi},v_t-\gamma_t v_{\xi}),\quad\xi\in\R,\,t\geqslant 0.
  \end{aligned}
\end{equation}
It is convenient to introduce the time-dependent functions
$\mu_1(t)\in\R$ and $\mu_2(t)\in\R$ via
\begin{equation*}
  \begin{aligned}
    \mu_1(t):=\gamma_t(t),\quad \mu_2(t):=\mu_{1,t}(t) = \gamma_{tt}(t),
  \end{aligned}
\end{equation*}
which transform \eqref{equ:2.11} into the coupled PDE/ODE system
\begin{subequations} 
  \label{equ:2.12}
  \begin{align}
    &Mv_{tt} = (A-\mu_1^2 M)v_{\xi\xi} + 2\mu_1 Mv_{\xi t} +
    \mu_2 M v_{\xi} + f(v,v_{\xi},v_t-\mu_1 v_{\xi}), &&\xi\in\R,\,t\geqslant 0,
    \label{equ:2.12a} \\
    &\mu_{1,t} = \mu_2, &&t\geqslant 0,\label{equ:2.12b} \\
    &\gamma_t = \mu_1, &&t\geqslant 0. \label{equ:2.12c}
  \end{align}
\end{subequations}
The quantity $\gamma(t)$ denotes the \begriff{position}, $\mu_1(t)$
the \begriff{translational velocity} and $\mu_2(t)$ the
\begriff{acceleration} of the wave $v$ at time $t$. 
We next specify initial data for the system \eqref{equ:2.12} as follows,
\begin{equation}
  \begin{aligned}
  \label{equ:2.13}
    v(\cdot,0) = u_0,\quad v_t(\cdot,0)=v_0+\mu_1^0 u_{0,\xi},\quad \mu_1(0)=\mu_1^0,\quad \gamma(0)=0.
  \end{aligned}
\end{equation}
Note that, requiring $\gamma(0)=0$ and $\mu_1(0)=\mu_1^0$, the first equation in \eqref{equ:2.13} follows from \eqref{equ:2.8} and \eqref{equ:2.7b}, while 
the second condition in \eqref{equ:2.13} can be deduced from \eqref{equ:2.9}, \eqref{equ:2.7b}, \eqref{equ:2.12c}.
At first glance the initial value $\mu_1^0$ can be taken arbitrarily and  set to
zero, for example.  But, depending on the solver used, it can be advantageous to define $\mu_1^0$ 
such that it is consistent with the algebraic constraint to be
discussed below.

To compensate the extra variable $\mu_2$ in the system \eqref{equ:2.12}, we impose an additional scalar algebraic constraint, also known as a \begriff{phase condition}, 
of the general form
\begin{align}
  \label{equ:2.14}
  \psi^{\mathrm{2nd}}(v,v_t,\mu_1,\mu_2)=0,\,t\geqslant 0. 
\end{align}
Together with \eqref{equ:2.12} this will lead to a partial
differential algebraic equation (PDAE).
For the phase condition we require that it vanishes at the traveling
wave solution
\begin{align} \label{equ:2.14a}
\psi^{\mathrm{2nd}}(v_\star,0,\mu_\star,0)=0.
\end{align}
In essence, this condition singles out one element from the
family of shifted profiles $v_{\star}(\cdot -\gamma),\gamma \in \R$.

In the following we discuss two possible choices for a phase
condition:

\textbf{Type 1:} (\textbf{fixed phase condition}).  Let
$\hat{v}:\R\rightarrow\R^m$ denote a time-independent and sufficiently
smooth \begriff{template} (or \begriff{reference}) \begriff{function},
e.g. $\hat{v}=u_0$. Then we consider the following \begriff{fixed
  phase condition}
\begin{equation}  \label{equ:2.15}
  \psi_{\mathrm{fix},3}^{\mathrm{2nd}}(v) 
  := \langle v-\hat{v},\hat{v}_{\xi} \rangle_{L^2} = 0,\;t\geqslant 0.
\end{equation} 
This condition is obtained from minimizing the $L^2$-distance of the
shifted versions of $v$ from the template $\hat{v}$ at each time
instance
\begin{align*}
  \rho(\gamma) 
  := \left\|v(\cdot,t)-\hat{v}(\cdot -\gamma)\right\|^2_{L^2}=
  \left\|v(\cdot+\gamma,t)-\hat{v}(\cdot)\right\|^2_{L^2}.
\end{align*}
The necessary condition for a local minimum to occur at $\gamma=0$ is
\begin{align*}
  0\overset{!}{=}\left[\frac{d}{d\gamma}
    \left\langle  v(\cdot,t)-\hat{v}(\cdot-\gamma),
      v(\cdot,t)-\hat{v}(\cdot-\gamma)  \right\rangle_{L^2}
  \right]_{\gamma=0} = 2
  \left\langle v(\cdot,t)-\hat{v},\hat{v}_{\xi}\right\rangle_{L^2},
  \;t\geqslant 0.
\end{align*}
To reduce the index of the resulting PDAE, we
differentiate \eqref{equ:2.15} w.r.t. $t$ and obtain 
\begin{equation}\label{equ:2.16} 
  \psi_{\mathrm{fix},2}^{\mathrm{2nd}}(v_t) 
  := \langle v_t,\hat{v}_{\xi} \rangle_{L^2} = 0,\;t\geqslant 0.
\end{equation}
Finally, differentiating \eqref{equ:2.16} once more w.r.t. $t$ and
using equation \eqref{equ:2.12a} yields the following
condition
\begin{equation}  \label{equ:2.17}
\begin{aligned}
  \psi_{\mathrm{fix},1}^{\mathrm{2nd}}(v,v_t,\mu_1,\mu_2) := 
  & \langle
  (M^{-1}A-\mu_1^2 I_m)v_{\xi\xi} + 2\mu_1  v_{\xi t}
    + M^{-1} f(v,v_{\xi},v_t-\mu_1 v_{\xi}),\hat{v}_{\xi} \rangle_{L^2}  \\
  &+ \mu_2 \langle v_{\xi},\hat{v}_{\xi} \rangle_{L^2} =
  0,\;t\geqslant 0.
\end{aligned}
\end{equation}
Note that equation \eqref{equ:2.17} can be explicitly solved for
$\mu_2$, if the template $\hat{v}$ is chosen such that 
$\langle v_{\xi},\hat{v}_{\xi} \rangle_{L^2}\neq 0$
for any $t\geqslant 0$.

The numbers $j=1,2,3$ in the notation
$\psi_{\mathrm{fix},j}^{\mathrm{2nd}}$ above indicate the index of the
resulting PDAE (in a formal sense) as the minimum number of
differentiations with respect to $t$, necessary to obtain an explicit
differential equation for the unknowns $(v,\mu_1,\mu_2)$
(cf. \cite[Ch.~1]{HairerLubichRoche1989},
\cite[Ch.~2]{BrenanCampbellPetzold1996}). In general, the value of this
(differential) index may depend on the system formulation. For
example, if we do not introduce $\mu_2$, but omit \eqref{equ:2.12b}
from the system and replace $\mu_2$ by $\mu_{1,t}$ in
\eqref{equ:2.12a}, then we need only two differentiations to obtain an
explicit differential equation for $(v,\mu_1)$. Hence the index is
lowered by one (this methodology is described in the ODE setting in
\cite[Prop.~2.5.3]{BrenanCampbellPetzold1996}).

Let us note that the index $2$ formulation \eqref{equ:2.16} and
the index $1$ formulation \eqref{equ:2.17} enforce constraints
on  $\mu_1(0)=\mu_1^0$ and $\mu_2(0)=\mu_2^0$ in order
to have consistent initial values. Setting $t=0$ in \eqref{equ:2.16}
and using \eqref{equ:2.13} yields the condition
\begin{equation} \label{equ:2.13consist}
 \mu_1^0 \langle u_{0,\xi},\hat{v}_{\xi} \rangle_{L^2} + 
\langle v_0, \hat{v}_{\xi}\rangle_{L^2} =0,
\end{equation}
from which $\mu_1^0$ can be determined.
Further, setting $t=0$ in \eqref{equ:2.17} and using \eqref{equ:2.13} 
leads to an equation from which one can determine $\mu_2^0$ 
from the remaining initial data
\begin{equation} \label{equ:2.14consist} 
  0 = \langle (M^{-1}A+(\mu_1^0)^2I_m) u_{0,\xi \xi} 
  + 2 \mu_1^0 v_{0,\xi} 
  +  M^{-1}f(u_0,u_{0,\xi},v_0),
  \hat{v}_{\xi} \rangle_{L^2} + \mu_2^0 \langle
  u_{0,\xi},\hat{v}_{\xi}
\rangle_{L^2}.
\end{equation}

\textbf{Type 2:} (\textbf{orthogonal phase condition}). The
\begriff{orthogonal phase conditions} read as follows:
\begin{equation}
  \psi_{\mathrm{orth},2}^{\mathrm{2nd}}(v,v_t) := \langle v_t,v_{\xi} \rangle_{L^2} = 0,\;t\geqslant 0, \label{equ:2.15_orth} 
\end{equation}
\begin{equation}\label{equ:2.16_orth}
\begin{aligned}
  \psi_{\mathrm{orth},1}^{\mathrm{2nd}}(v,v_t,\mu_1,\mu_2) := &
  \langle (M^{-1}A-\mu_1^2 I_m)v_{\xi\xi} + 2\mu_1 v_{\xi t}
    + M^{-1} f(v,v_{\xi},v_t-\mu_1v_{\xi}),v_{\xi} \rangle_{L^2}  \\
  & + \langle v_t,v_{\xi t} \rangle_{L^2} + \mu_2 \langle
  v_{\xi},v_{\xi} \rangle_{L^2} = 0,\;t\geqslant 0.
\end{aligned}
\end{equation}
For first order evolution equations, condition \eqref{equ:2.15_orth}
has an immediate interpretation as a necessary condition for
minimizing $\|v_t\|_{L^2}$ (cf. \cite{BeynOttenRottmannMatthes2013}).
The same interpretation is possible here when applied to a
proper formulation as a first order system (cf. \cite[(4.46)]{BeynOttenRottmannMatthes2016b}). 
For the moment, our motivation is, that this condition expresses orthogonality of
$v_t$ to the vector $v_{\xi}$ tangent to the group orbit
$\{v(\cdot-\gamma): \gamma \in \R\}$ at $\gamma=0$.  
For a different kind of orthogonal phase condition that relies on the
formulation as a first order system, see \cite[(4.45)]{BeynOttenRottmannMatthes2016b}. 
The condition \eqref{equ:2.15_orth} leads to a PDAE of index $2$ in the sense
above. Differentiating \eqref{equ:2.15_orth} w.r.t.\ $t$ and using
\eqref{equ:2.12a} implies \eqref{equ:2.16_orth} which yields a PDAE of
index $1$.  Note that equation \eqref{equ:2.16_orth} can be explicitly
solved for $\mu_2$, provided that $\langle v_{\xi},v_{\xi}
\rangle_{L^2}\neq 0$ for any $t\geqslant 0$.

Similar to the type $1$ phase condition, 
we obtain constraints for consistent initial values 
when setting $t=0$ in  \eqref{equ:2.15_orth}, \eqref{equ:2.16_orth}.
Condition \eqref{equ:2.15_orth} leads to an equation for $\mu_1^0$
\begin{equation} \label{equ:2.15consist}
0 = \mu_1^0 \langle u_{0,\xi},u_{0,\xi}\rangle_{L^2} +
           \langle v_0,u_{0,\xi}\rangle_{L^2},
\end{equation}
while \eqref{equ:2.16_orth}, \eqref{equ:2.7b}, \eqref{equ:2.8} give an equation 
for $\mu_2^0$
\begin{equation}\label{equ:2.16consist}
\begin{aligned} 
  0 = &\langle 2(\mu_1^0)^2 u_{0,\xi \xi} +
  3\mu_1^0 v_{0,\xi}+ M^{-1}\left(Au_{0,\xi \xi}+ 
    f(u_0,u_{0,\xi},v_0)\right), u_{0,\xi} \rangle_{L^2}\\
 &+  \langle v_0,v_{0,\xi}\rangle_{L^2} + 
\mu_1^0\langle v_0,u_{0,\xi\xi}\rangle_{L^2} + 
\mu_2^0 \langle u_{0,\xi},u_{0,\xi} \rangle_{L^2}.
\end{aligned}
\end{equation}

Let us summarize the system of equations obtained by
the freezing method from the original Cauchy problem 
\eqref{equ:2.7}.
Combining the differential equations \eqref{equ:2.12}, the initial
data \eqref{equ:2.13} and the phase condition \eqref{equ:2.14}, we
arrive at the following PDAE to be solved numerically:
\begin{subequations}    \label{equ:2.19}
  \begin{align}
     \label{equ:2.19a}
     &\begin{aligned}
      Mv_{tt} &= (A-\mu_1^2 M)v_{\xi\xi} + 2\mu_1 Mv_{\xi,t} +
      \mu_2 M v_{\xi} + f(v,v_{\xi},v_t - \mu_1 v_{\xi}), \;\\
      \mu_{1,t} &= \mu_2, \quad
      \gamma_t=\mu_1,
    \end{aligned}&t\geqslant 0,\\
    & 0 = \psi^{\mathrm{2nd}}(v,v_t,\mu_1,\mu_2), &t\geqslant
    0,\label{equ:2.19b}\\
    &\begin{aligned}
      v(\cdot,0) &= u_0,\quad v_t(\cdot,0) = v_0+\mu_1^0 u_{0,\xi},
      \quad \mu_1(0) = \mu_1^0, \quad
      \gamma(0) = 0.
    \end{aligned}\label{equ:2.19c}
  \end{align}
\end{subequations}
The system \eqref{equ:2.19} depends on the choice of phase condition
$\psi^{\mathrm{2nd}}$ 
and is to be solved for $(v,\mu_1,\mu_2,\gamma)$ with given initial
data $(u_0,v_0,\mu_1^0)$.
It consists of a PDE for $v$ that is coupled to two ODEs for
$\mu_1$ and $\gamma$ \eqref{equ:2.19a}
and an algebraic constraint \eqref{equ:2.19b} which closes the system.
A consistent initial value $\mu_1^0$ for $\mu_1$ is computed from the phase
condition and the initial data 
(cf. \eqref{equ:2.13consist}, \eqref{equ:2.15consist}). Further
initialization of the algebraic variable $\mu_2$ is not needed
for a PDAE-solver but can be provided if necessary (cf.
\eqref{equ:2.14consist}, \eqref{equ:2.16consist}).

The ODE for $\gamma$ is called the \begriff{reconstruction equation}
in \cite{RowleyKevrekidisMarsden2003}. It decouples from the other
equations in \eqref{equ:2.19} and can be solved in a postprocessing
step. The ODE for $\mu_1$ is the new feature of the PDAE for second
order systems when compared to first order parabolic and
hyperbolic equations, cf.
\cite{BeynThuemmler2004,RottmannMatthes2010,BeynOttenRottmannMatthes2013}.

Finally, note that $(v,\mu_1,\mu_2)=(v_{\star},\mu_{\star},0)$ satisfies
\begin{align*}
  \begin{split}
  0 & = (A-\mu_{\star}^2 M)v_{\star,\xi\xi}(\xi) 
  + f(v_{\star}(\xi),v_{\star,\xi}(\xi),-\mu_{\star}v_{\star,\xi}(\xi)),\;\xi\in\R,\\
  0 & = \mu_2,\quad
  0 = \psi^{\mathrm{2nd}}(v_{\star},0,\mu_{\star},0),
  \end{split}
\end{align*}
and hence is a stationary solution of \eqref{equ:2.19a},
\eqref{equ:2.19b}. Obviously, in this case we have
$\gamma(t)=\mu_{\star}t$.  For a stable traveling wave we
expect that solutions $(v,\mu_1,\mu_2,\gamma)$ of \eqref{equ:2.19}
show the limiting behavior
\begin{align*}
  v(t)\rightarrow v_{\star},\quad \mu_1(t)\rightarrow\mu_{\star},\quad
  \mu_2(t)\rightarrow 0\quad\text{as}\quad t\to\infty,
\end{align*}
provided the initial data are close to their limiting values. In
Section~\ref{sec:3} we will provide theorems that justify this
expectation under suitable conditions.

\subsection{Traveling waves related to parabolic equations}
\label{subsec:2.2}
The following proposition shows an important relation between
traveling waves \eqref{equ:2.2} of the damped wave equation
\eqref{equ:2.1}, \eqref{equ:2.1a} and
traveling waves
\begin{align} 
  \label{equ:2.31}
  u_{\star}(x,t)=w_{\star}(x-c_{\star}t),\; 
  x\in\R,\,t\geqslant 0,
\end{align}
with nonvanishing speed $c_\star$ of the parabolic equation
\begin{align} 
  \label{equ:2.32}
  B u_t = \tilde{A} u_{xx} + \tilde{C}u_x+g(u),\; x\in\R,\, t\ge 0.
\end{align}
The matrices $\tilde{A},\tilde{C} \in\R^{m,m}$ in \eqref{equ:2.32} may
differ from $A,C$ in \eqref{equ:2.1}, \eqref{equ:2.1a}.  This observation goes back to
\cite{Hadeler1988} and has also been used in
\cite{GallayRaugel1997}. Note that in this case $w_{\star}:\R\to \R^m$ solves the
traveling wave equation
\begin{align} 
  \label{equ:2.33}
  0 = \tilde{A} w_{\star,\zeta\zeta} + c_{\star} B w_{\star,\zeta} +
\tilde{C}w_{\star,\zeta}+ g(w_{\star}),\;\zeta\in\R.
\end{align}

\begin{proposition} \label{pro1} 
  \begin{itemize}[leftmargin=0.65cm]\setlength{\itemsep}{0cm}
  \item[(i)] Let \eqref{equ:2.31} be a
  traveling wave of the parabolic equation \eqref{equ:2.32}. Then for
  every $0\neq k\in\R$ and $A,C,M\in\R^{m,m}$, satisfying
  $\tilde{A}=k^2A-c_{\star}^2 M$, $\tilde{C}=kC$, equation
  \eqref{equ:2.2} with
  \begin{align}
    \label{equ:2.34}
    v_{\star}(\xi)=w_{\star}(k\xi),\quad
    \mu_{\star}=\frac{c_{\star}}{k}
  \end{align}
  defines a traveling wave of the damped wave equation
  \eqref{equ:2.1}, \eqref{equ:2.1a}.
  \item[(ii)] Conversely, let \eqref{equ:2.2} be a traveling wave of
  \eqref{equ:2.1}, \eqref{equ:2.1a}.  Then for every $0\neq k\in\R$ equation
  \eqref{equ:2.31} with
  \begin{align}
    \label{equ:2.35}
    w_{\star}(\zeta)=v_{\star}(\frac{\zeta}{k}),\quad
    c_{\star}=\mu_{\star}k
  \end{align}
  defines a traveling wave of \eqref{equ:2.32} with
  $\tilde{A}=k^2(A-\mu_{\star}^2 M)$, $\tilde{C}=kC$.
  \end{itemize}
\end{proposition}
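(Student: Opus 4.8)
The plan is to treat both parts of Proposition~\ref{pro1} as a single scaling identity for ordinary differential equations: in each direction it suffices to substitute the proposed profile into the relevant traveling-wave ODE --- equation~\eqref{equ:2.4} for the damped wave equation \eqref{equ:2.1}, \eqref{equ:2.1a}, and equation~\eqref{equ:2.33} for the parabolic equation~\eqref{equ:2.32} --- and to check the accompanying limit conditions. The key preliminary observation is that, by \eqref{equ:2.1a}, along any profile $v$ with speed $\mu$ one has $f(v,v_\xi,-\mu v_\xi)=g(v)+Cv_\xi+\mu B v_\xi$, so that \eqref{equ:2.4} reads $0=(A-\mu_\star^2M)v_{\star,\xi\xi}+g(v_\star)+(C+\mu_\star B)v_{\star,\xi}$.

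\textbf{Part (i).} Starting from a parabolic traveling wave \eqref{equ:2.31}, so that $w_\star$ solves \eqref{equ:2.33}, I define $v_\star,\mu_\star$ by \eqref{equ:2.34} and compute $v_{\star,\xi}(\xi)=k\,w_{\star,\zeta}(k\xi)$ and $v_{\star,\xi\xi}(\xi)=k^2 w_{\star,\zeta\zeta}(k\xi)$ by the chain rule. Inserting this into the right-hand side of \eqref{equ:2.4} and factoring out the scalars gives, at $\zeta=k\xi$, the expression $k^2(A-\mu_\star^2M)w_{\star,\zeta\zeta}+g(w_\star)+(kC+k\mu_\star B)w_{\star,\zeta}$. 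Now I use $k^2\mu_\star^2=c_\star^2$ (from $\mu_\star=c_\star/k$), $\tilde A=k^2A-c_\star^2M$, $\tilde C=kC$ and $k\mu_\star=c_\star$ to rewrite this as $\tilde A w_{\star,\zeta\zeta}+\tilde C w_{\star,\zeta}+c_\star B w_{\star,\zeta}+g(w_\star)$, which is zero by \eqref{equ:2.33}. It remains to verify \eqref{equ:2.2.1}: since $k\neq0$ the map $\xi\mapsto k\xi$ is a bijection of $\R$, so $v_\pm:=\lim_{\xi\to\pm\infty}v_\star(\xi)$ equals the asymptotic rest state $w_\pm$ of the parabolic wave, $f(v_\pm,0,0)=g(v_\pm)=g(w_\pm)=0$, and $v_\star$ is non-constant because $w_\star$ is and $k\neq0$.

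\textbf{Part (ii).} This is the same identity read in the opposite direction. Given \eqref{equ:2.2} with $v_\star$ solving \eqref{equ:2.4}, I set $w_\star,c_\star$ as in \eqref{equ:2.35} and $\tilde A=k^2(A-\mu_\star^2M)$, $\tilde C=kC$; differentiating $w_\star(\zeta)=v_\star(\zeta/k)$ and substituting into \eqref{equ:2.33}, the same algebraic relations (now with $c_\star=\mu_\star k$) bring the parabolic traveling-wave equation back to \eqref{equ:2.4}, which holds by assumption. The limit and rest-state conditions transfer in exactly the same way. Since no step in Part (i) relied on the direction of the substitution, the correspondence \eqref{equ:2.34}--\eqref{equ:2.35} is in fact a bijection between the two families of traveling waves.

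\textbf{Main obstacle.} There is no serious obstacle; the content is entirely the scaling bookkeeping, and the only places that deserve attention are (a) the quadratic term, where the identity $c_\star^2=k^2\mu_\star^2$ is precisely what lets the $-c_\star^2M$ in $\tilde A$ be reabsorbed into the $-\mu_\star^2M$ of \eqref{equ:2.4}, and (b) the third slot of $f$: on a profile the argument $v_t-\mu_\star v_\xi$ collapses to $-\mu_\star v_\xi$, so the $-Bw$ term of \eqref{equ:2.1a} produces $+\mu_\star B v_{\star,\xi}$, which is what matches the $c_\star B w_{\star,\zeta}$ term of \eqref{equ:2.33} after scaling. One should also note that the hypotheses constrain $A,C,M$ only through $\tilde A=k^2A-c_\star^2M$ and $\tilde C=kC$, so in Part (i) the matrices $A,C,M$ (and the well-posedness requirement that $M$ be invertible with $M^{-1}A$ positive diagonalizable) are to be chosen, not verified.
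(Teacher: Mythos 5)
Your proposal is correct and takes essentially the same route as the paper: the paper substitutes $u_\star(x,t)=w_\star(k(x-\mu_\star t))$ directly into the PDE \eqref{equ:2.1}, \eqref{equ:2.1a}, while you substitute the scaled profile into the equivalent traveling-wave ODE \eqref{equ:2.4}, and the two computations are the same chain-rule/scaling bookkeeping with the identities $k^2\mu_\star^2=c_\star^2$ and $k\mu_\star=c_\star$. Your additional checks of the limit conditions \eqref{equ:2.2.1} and non-constancy are correct and only make explicit what the paper leaves tacit.
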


\begin{proof}
  \begin{itemize}[leftmargin=0.65cm]\setlength{\itemsep}{0cm}
  \item[(i)] By assumption, $w_\star$ satisfies
    \eqref{equ:2.33}.  Let $0\neq k\in\R$ and
    $A,C,M\in\R^{m,m}$ be such that $\tilde{A}=k^2A-c_\star^2M$,
    $\tilde{C}=kC$ hold and define $v_\star, \mu_\star$ by
    \eqref{equ:2.34}.  Then $u_\star(x,t)=v_\star(x-\mu_\star
    t)=w_\star\bigl(k(x-\mu_\star t)\bigr)$ satisfies
    \[
      - Mu_{\star,tt} - Bu_{\star,t} + Au_{\star,xx} + Cu_{\star,x}+g(u_{\star})
      = \tilde{A}w_{\star,\zeta\zeta} + c_{\star}Bw_{\star,\zeta} + \tilde{C}w_{\star,\zeta}+ g(w_{\star})
      = 0.
    \]
  \item[(ii)] By assumption, $v_\star,\mu_\star$ from \eqref{equ:2.2}
    satisfy \eqref{equ:2.4}.  Let $0\neq k\in\R$ and define
    $\tilde{A}:=k^2 (A-\mu_{\star}^2 M), \tilde{C}:=kC\in\R^{m,m}$
    and $w_{\star},c_{\star}$ by \eqref{equ:2.35}. Then 
    $u_{\star}(x,t) = w_{\star}(x-c_{\star}t) 
    = v_{\star}\left(\frac{x-c_{\star}t}{k}\right)$ satisfies
    \[
      -Bu_{\star,t} + \tilde{A}u_{\star,xx} + \tilde{C}u_{\star,x}+ g(u_{\star})
      = (A-\mu_{\star}^2 M)v_{\star,\xi\xi} + \mu_{\star}Bv_{\star,\xi} + C v_{\star,\xi}+ g(v_{\star})
      = 0.
    \]
  \end{itemize}
\end{proof}

According to Proposition~\ref{pro1}, any  traveling wave \eqref{equ:2.31} of the parabolic equation \eqref{equ:2.32} 
leads to a traveling wave \eqref{equ:2.2} of the damped wave equation \eqref{equ:2.1},\eqref{equ:2.1a} and vice versa.

\begin{remark} \label{rem1} Note that the profiles
  $v_{\star},w_{\star}$ and the velocities $\mu_{\star},c_{\star}$
  coincide if $k=1$. In this case $\tilde{A}=A-c_{\star}^2 M$, and the
  matrices $A$ and $\tilde{A}$ are different (provided $c_{\star}\neq
  0$).  If we insist on $A=\tilde{A}$ then the profiles will be
  different.

  In case $C=0$ both systems \eqref{equ:2.1}, \eqref{equ:2.1a} and
  \eqref{equ:2.32} share
  a symmetry property: if $v_{\star}(\xi)(\xi \in \R),c_{\star}$
  resp.  $w_{\star}(\zeta)(\zeta \in \R),\mu_{\star}$ is a traveling
  wave then so is the reflected pair $v_{\star}(-\xi) (\xi \in
  \R),-c_{\star}$ resp.  $w_{\star}(-\zeta)(\zeta \in
  \R),-\mu_{\star}$. Thus, choosing $k <0$ in \eqref{equ:2.34}
  resp. \eqref{equ:2.35} will not produce new waves other than those
  induced by reflection symmetry. Therefore, we will assume $k$ to be
  positive in the following.

  It is instructive to consider two limiting cases of the
  transformation \eqref{equ:2.34} when a traveling wave $w_{\star}$
  with velocity $c_{\star}\neq 0$ is given for the parabolic equation
  \eqref{equ:2.32}.

  First assume $A= \tilde{A}$ and let $M \rightarrow 0$. Then the
  relation $\tilde{A}=k^2A -c_{\star}^2M$ implies $k \rightarrow 1$
  and $v_{\star} \rightarrow w_{\star}$, $\mu_{\star} \rightarrow
  c_{\star}$. Thus the profile and the velocity of the traveling waves
  \eqref{equ:2.2} of the system~\eqref{equ:2.1}, \eqref{equ:2.1a} converge to the
  correct limit in the parabolic case.  Second, consider the scalar
  case, fix $A>0$ and let $M \rightarrow \infty$. Then the relation
  $\tilde{A}=k^2A -c_{\star}^2M$ implies $k \rightarrow \infty$ and
  $\mu_{\star} =\frac{c_{\star}}{k}\rightarrow 0$ . Thus a large value
  of $M$ creates a slow wave for the system~\eqref{equ:2.1}, \eqref{equ:2.1a}
  which has
  steep gradients in its profile due to $v_{\star,\xi}(\xi)=k
  w_{\star,\zeta}(k \xi)$.
\end{remark}

\subsection{Applications and numerical examples}
\label{subsec:2.3}

In the following we consider two examples with nonlinearities of
Nagumo and FitzHugh-Nagumo type. We use the mechanism from
Proposition~\ref{pro1} to obtain traveling waves of these damped wave
equations. Then we solve the PDAE~\eqref{equ:2.19} providing us with wave
profiles, their positions, velocities and accelerations. All numerical 
computations in this paper were done with Comsol Multiphysics 5.2, 
\cite{ComsolMultiphysics52}. Specific data of time and space discretization 
are given below.

\begin{example}[Nagumo wave equation] \label{exa1} 
  Consider the scalar
  parabolic Nagumo equation, \cite{Miura1981,Murray1989},
  \begin{align} 
    \label{equ:2.36}
    u_t = u_{xx} + g(u) ,\; x\in\R,\, t\ge 0, \quad g(u)=u(1-u)(u-b),
  \end{align}
  with $u=u(x,t)\in\R$ and some fixed $b\in(0,1)$.  It is well known
  that \eqref{equ:2.36} has an explicit traveling front solution
  $u_{\star}(x,t)=w_{\star}(x-c_{\star}t)$ given by
  \begin{align*} 
    w_{\star}(\zeta) =
    \left(1+\exp\left(-\tfrac{\zeta}{\sqrt{2}}\right)\right)^{-1},
    \quad c_{\star}=-\sqrt{2}\left(\frac{1}{2}-b\right),
  \end{align*}
  with asymptotic states $w_{-}=0$ and $w_{+}=1$. Note that
  $c_{\star}<0$ if $b<\frac{1}{2}$ and $c_{\star}>0$ if
  $b>\frac{1}{2}$.  Proposition~\ref{pro1}(i) implies that the
  corresponding Nagumo wave equation
  \begin{align}
    \label{equ:2.38}
    \varepsilon u_{tt} + u_t = u_{xx} + g(u),\; x\in \R,\, t\ge
    0,
  \end{align}
  has a traveling front solution
  $u_{\star}(x,t)=v_{\star}(x-\mu_{\star}t)$ given by
  \begin{align}
    \label{equ:2.39}
    v_{\star}(\xi)=w_{\star}(k\xi),\quad
    \mu_{\star}=\frac{-\sqrt{2}\left(\frac{1}{2}-b\right)}{k},\quad k=
    \left(1+2\varepsilon\left(\frac{1}{2}-b\right)^2\right)^{1/2}.
  \end{align}
  Figure~\ref{fig:DampedNagumoTraveling1Front} shows a numerical
  approximation of the time
  evolution of the traveling front solution $u$ of \eqref{equ:2.38} on
  the spatial domain $(-50,50)$ with homogeneous Neumann
  boundary conditions and initial data
  \begin{align} \label{equ:initdata} 
    u_0(x)=\frac{1}{\pi}\arctan(x)+\frac{1}{2},\quad v_0(x)=0,\quad x\in(-50,50).
\end{align}
Further parameter values are $\varepsilon=b=\frac{1}{4}$. For the space 
discretization we used continuous piecewise linear finite elements with 
spatial stepsize $\triangle x=0.1$. For the time discretization we used 
the BDF method of order $2$ with absolute tolerance $\mathrm{atol}=10^{-3}$, 
relative tolerance $\mathrm{rtol}=10^{-2}$, temporal stepsize $\triangle t=0.1$
and final time $T=150$.

\begin{figure}[ht]
  \centering
  \subfigure[]{\includegraphics[height=3.8cm] {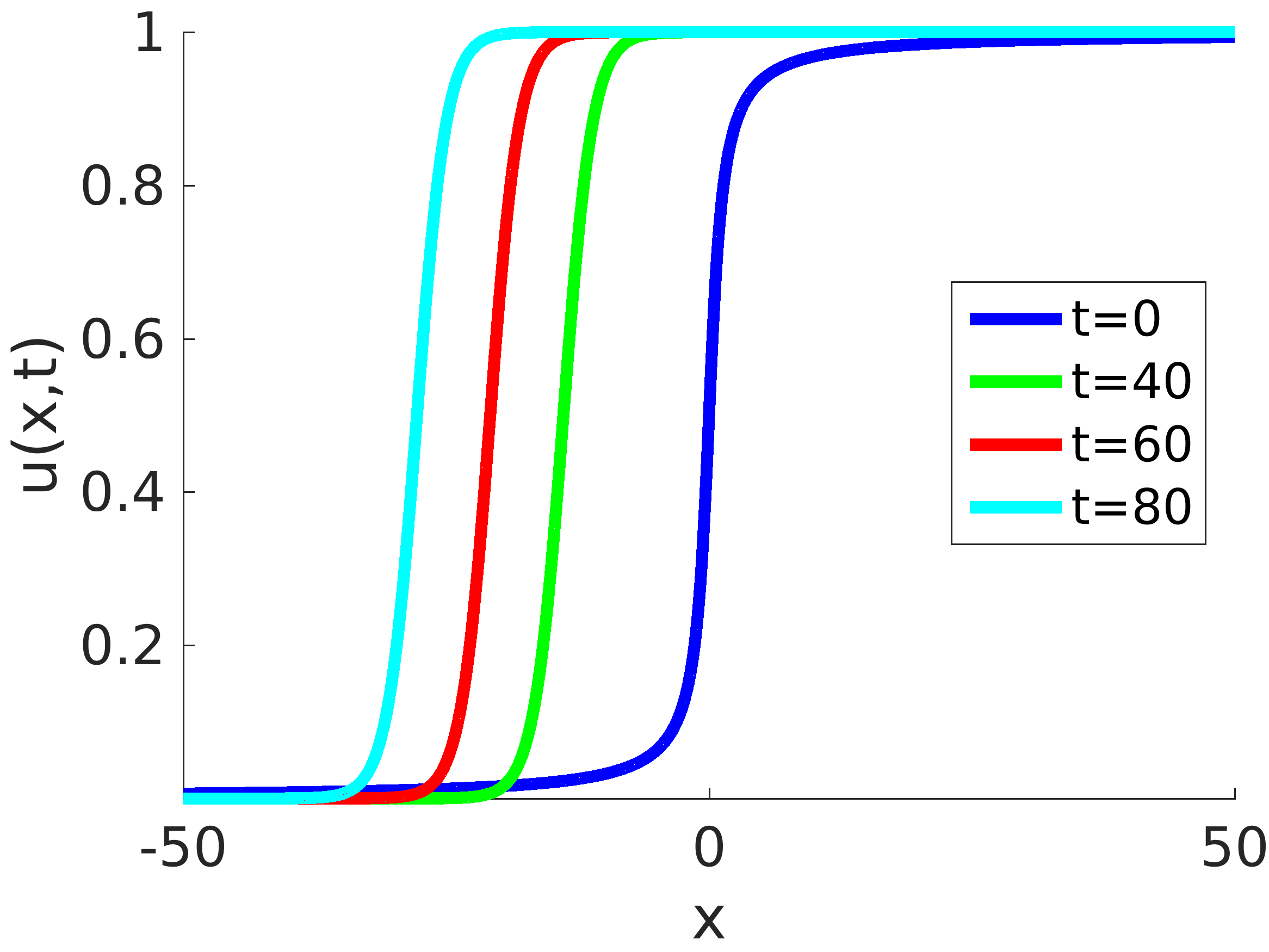}\label{fig:DampedNagumoTraveling1FrontEndtime}}
  \subfigure[]{\includegraphics[height=3.8cm] {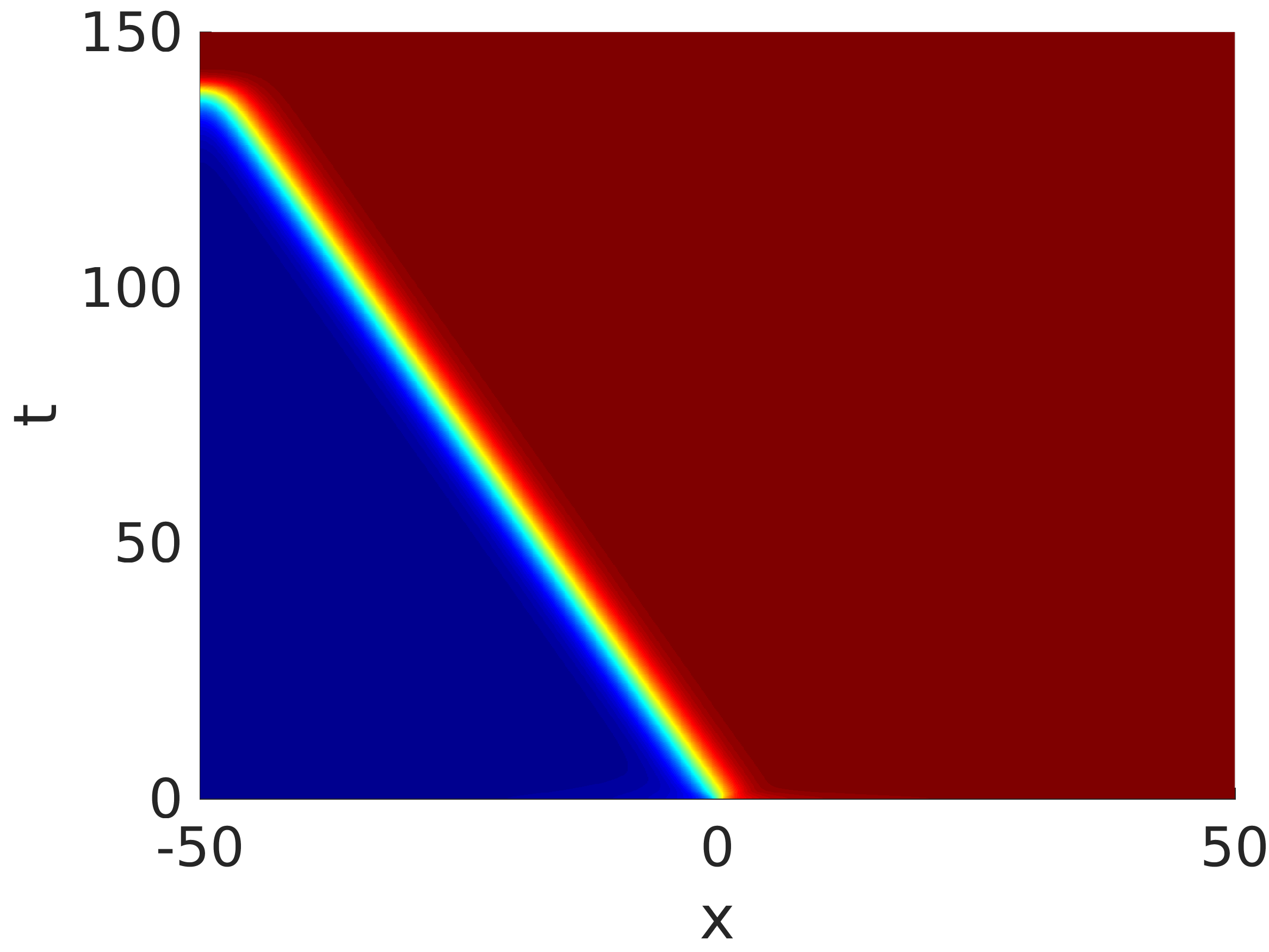} \label{fig:DampedNagumoTraveling1FrontSpaceTime}}
  \caption{Traveling front of Nagumo wave equation \eqref{equ:2.38} at different time instances (a) and its time evolution (b) for parameters $\varepsilon=b=\frac{1}{4}$.}
  \label{fig:DampedNagumoTraveling1Front}
\end{figure} 

\begin{figure}[ht]
  \centering
  \subfigure[]{\includegraphics[height=3.8cm] {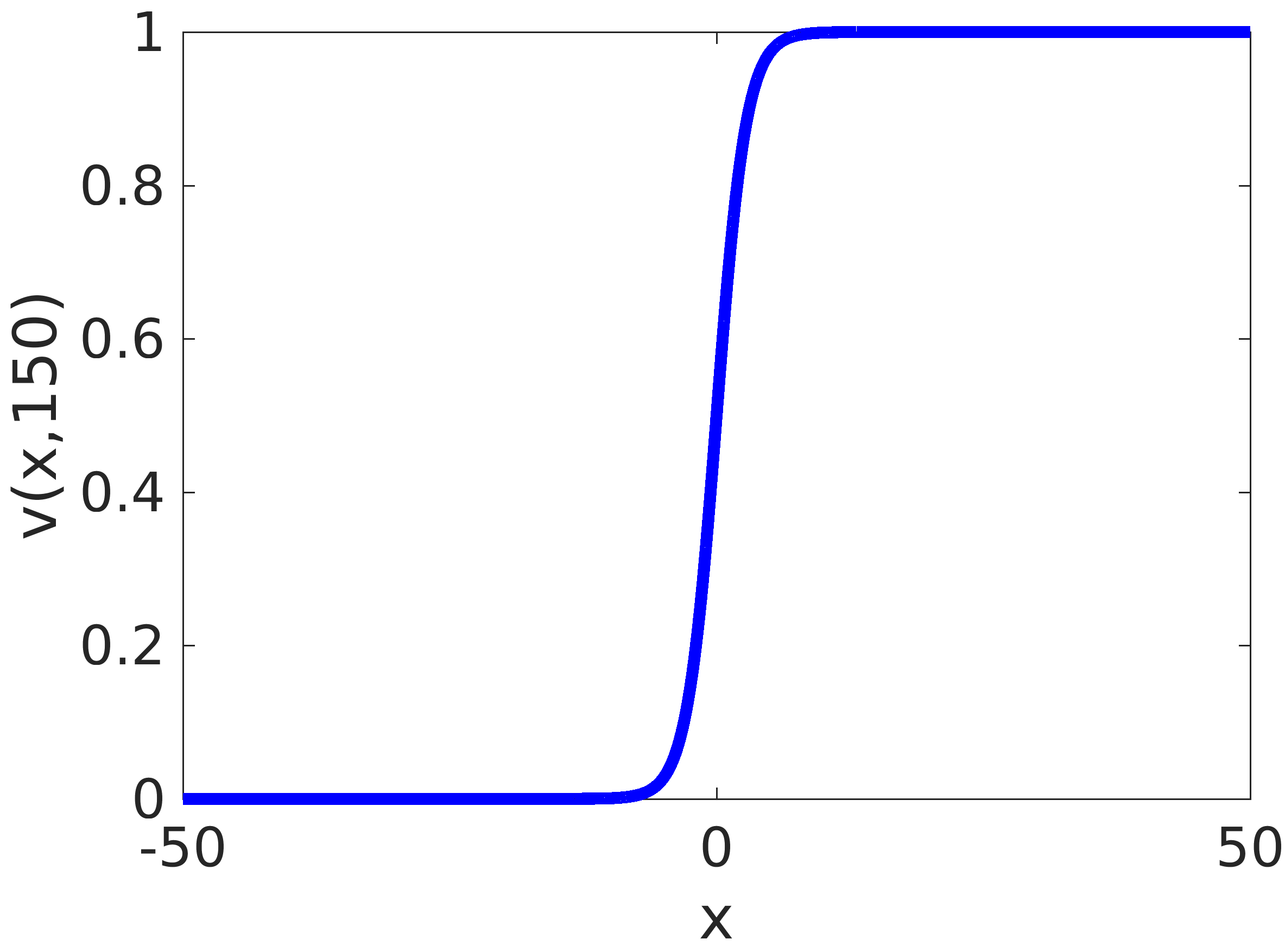}\label{fig:DampedNagumoFrozen1FrontProfile}}
  \subfigure[]{\includegraphics[height=3.8cm] {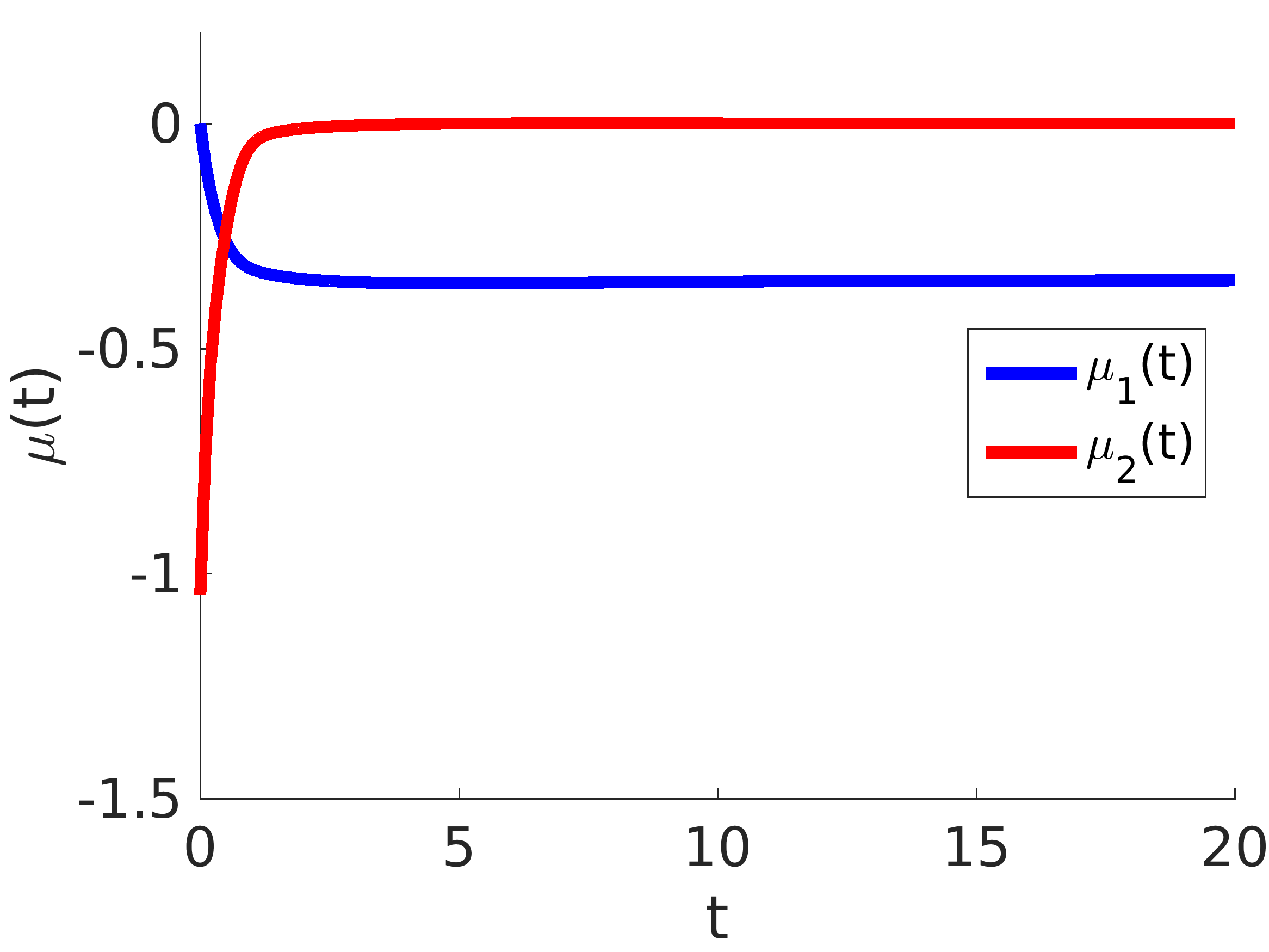} \label{fig:DampedNagumoFrozen1FrontVelocities}}
  \subfigure[]{\includegraphics[height=3.8cm] {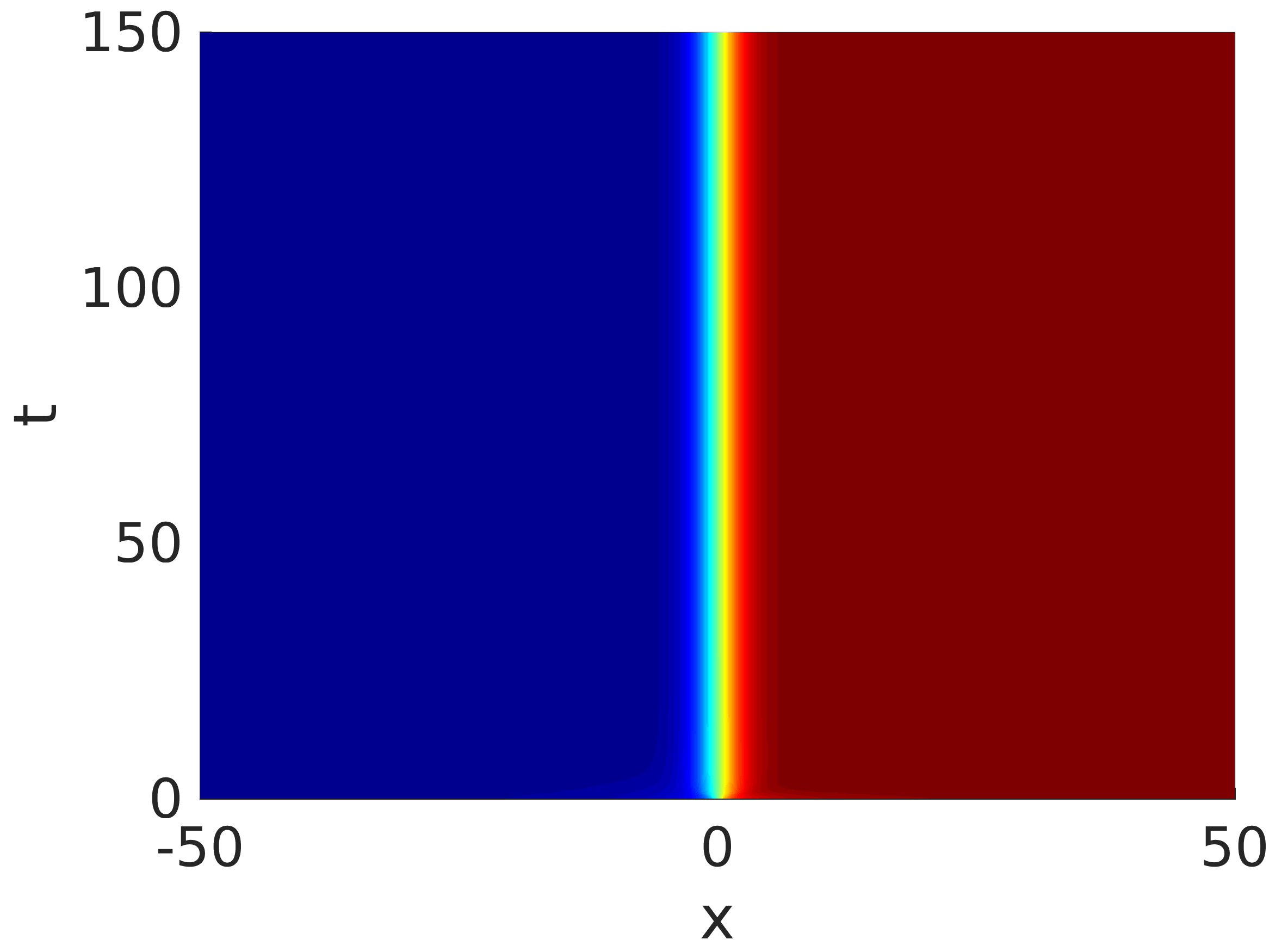}\label{fig:DampedNagumoFrozen1FrontSpaceTime}}
  \caption{Solution of the frozen Nagumo wave equation \eqref{equ:2.45}: Approximation of profile $v(x,150)$ (a) and time evolutions of velocity $\mu_1$ and acceleration $\mu_2$ (b) 
  and of the profile $v$ (c) for parameters $\varepsilon=b=\frac{1}{4}$.}
  \label{fig:FrozenDampedNagumoTraveling1Front}
\end{figure}


Next we solve with the same data the frozen Nagumo wave equation
resulting from \eqref{equ:2.19}
\begin{subequations}    \label{equ:2.45}
  \begin{align}
     \label{equ:2.45a}
     &\begin{aligned}
      \varepsilon v_{tt} + v_t &= (1-\mu_1^2 \varepsilon)v_{\xi\xi} +
      2\mu_1 \varepsilon v_{\xi,t} +
      (\mu_2 \varepsilon + \mu_1)v_{\xi} + g(v),\\
      \mu_{1,t} &= \mu_2, \quad \gamma_t=\mu_1,
    \end{aligned}&t\geqslant 0,\\
    & 0 = \bigl\langle v_t(\cdot,t),\hat{v}_\xi\bigr\rangle_{L^2(\R,\R)}, &t\geqslant 0,\label{equ:2.45b}\\
    &\begin{aligned}
      v(\cdot,0) &= u_0,\quad v_t(\cdot,0) = v_0+\mu_1^0 u_{0,\xi}, \quad
      \mu_1(0) = \mu_1^0, \quad
      \gamma(0) = 0.
    \end{aligned}
  \end{align}
\end{subequations}
Figure \ref{fig:FrozenDampedNagumoTraveling1Front} shows the solution
$(v,\mu_1,\mu_2,\gamma)$ of \eqref{equ:2.45} on the spatial domain
$(-50,50)$ with homogeneous Neumann boundary conditions,
initial data $u_0$, $v_0$ from \eqref{equ:initdata}, and reference
function $\hat{v}=u_0$. For the computation we used the fixed phase condition 
$\psi_{\mathrm{fix},2}^{\mathrm{2nd}}(v_t)$ from \eqref{equ:2.16} with consistent 
intial data $\mu_1^0$, $\mu_2^0$, c.f. \eqref{equ:2.13consist} and \eqref{equ:2.14consist}. 
Note that $v_0=0$ from \eqref{equ:initdata} implies $\mu_1^0=0$ according to \eqref{equ:2.13consist}. 
Then, inserting $\mu_1^0=0$, $u_0,v_0$ from \eqref{equ:initdata}, $\hat{v}=u_0$, 
$M=\varepsilon$, $A=B=1$, $C=0$ and $g$ from \eqref{equ:2.36} into \eqref{equ:2.14consist}, 
finally implies $\mu_2^0=-1.0312$.
The discretization data are taken as in the nonfrozen case.
The diagrams show that after a very short transition phase the profile
becomes stationary, the acceleration $\mu_2$ converges to zero, and
the speed $\mu_1$ approaches an asymptotic value
$\mu_{\star}^{\mathrm{num}}$ 
which is close to the exact value $\mu_{\star}\approx-0.34816$, given by 
\eqref{equ:2.39}. We expect 
$|\mu_{\star}-\mu_{\star}^{\mathrm{num}}|\to 0$ as the domain $(-R,R)$ 
grows and stepsizes tend to zero.

Note that the unknown function $\gamma(t)$ (not shown), $t\in [0,150]$, is obtained by integrating 
the last equation in \eqref{equ:2.45a}. From its values one can still recover
the position of the front in the original system \eqref{equ:2.38}. It turns out 
that the wave hits the left boundary at $x=-50$ at time $t \approx 143.82$ 
(cf. Figure \ref{fig:DampedNagumoTraveling1Front}(b)).

\begin{figure}[ht]
  \centering
  \subfigure[]{\includegraphics[height=3.8cm] {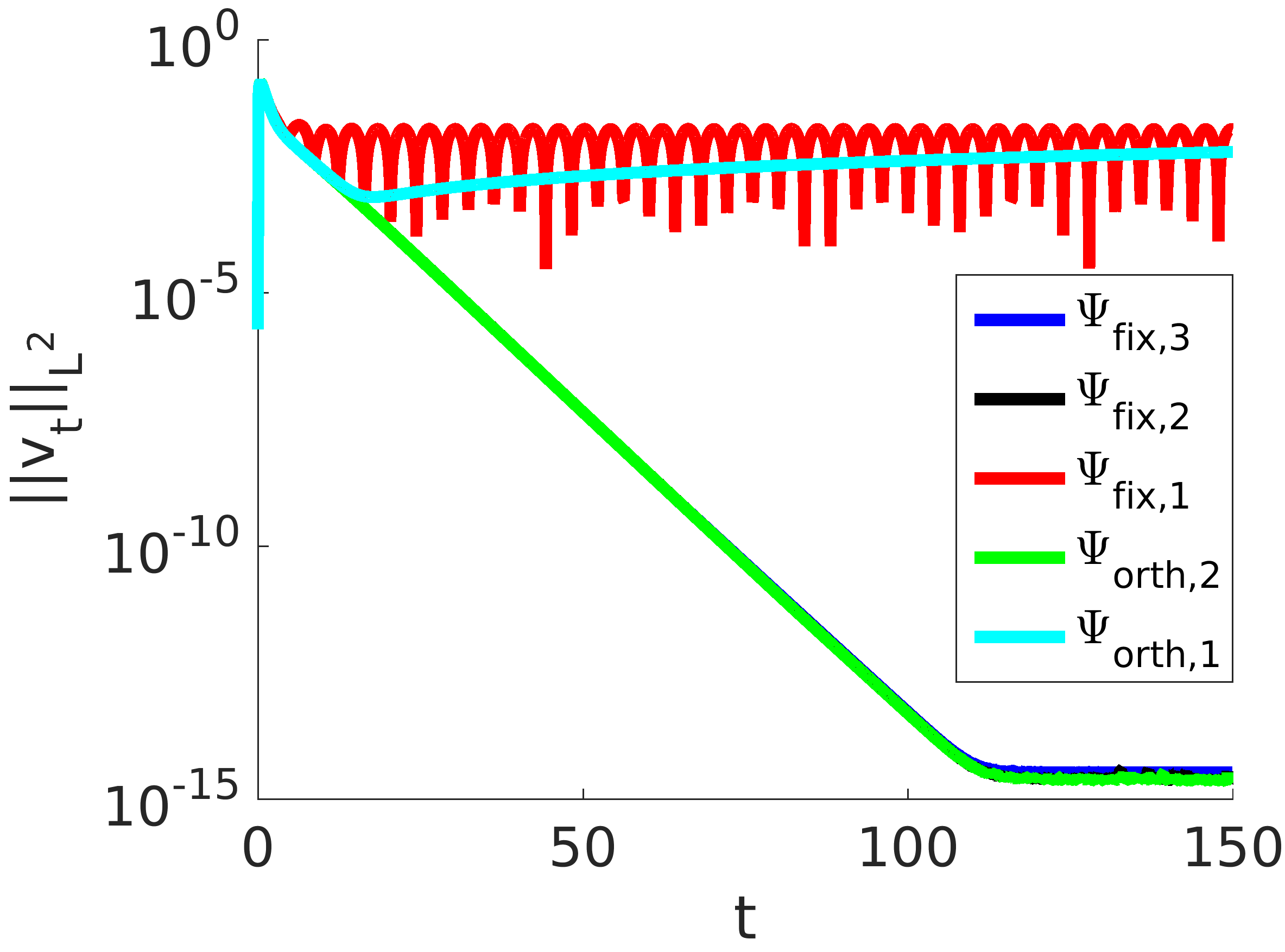}\label{fig:DampedNagumoFrozen1FrontVt}}
  \subfigure[]{\includegraphics[height=3.8cm] {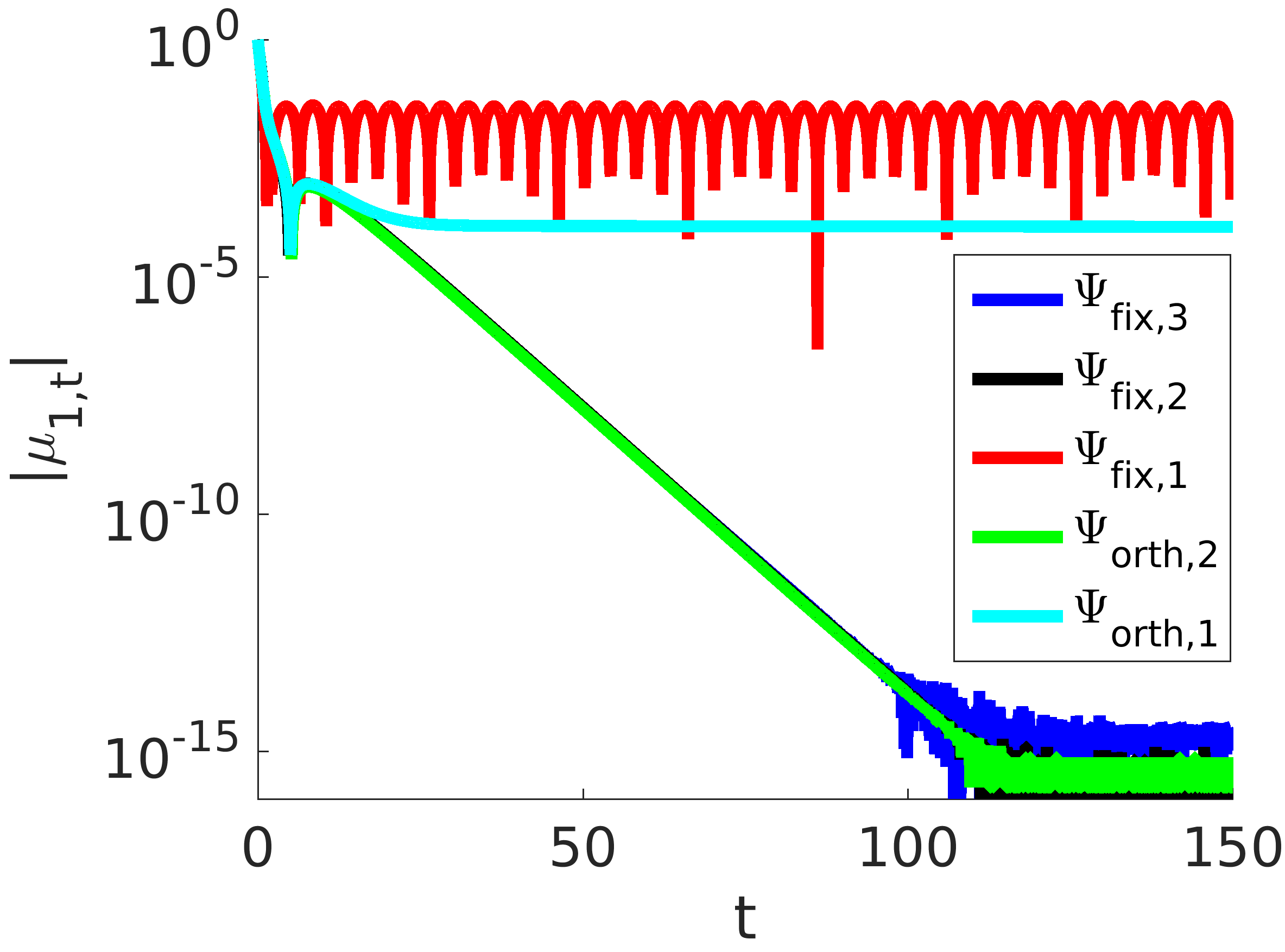} \label{fig:DampedNagumoFrozen1FrontMu1t}}
  \caption{Comparison of the phase conditions for the frozen Nagumo wave equation \eqref{equ:2.45}: Time evolution of $\left\|v_t\right\|_{L^2}$ (a) and $|\mu_{1,t}|$ (b) for parameters 
$\varepsilon=b=\frac{1}{4}$.}
  \label{fig:DampedNagumoFrozen1FrontPhaseConditions}
\end{figure} 

If we replace the phase condition $\psi_{\mathrm{fix},2}^{\mathrm{2nd}}$ in \eqref{equ:2.45} by $\psi_{\mathrm{fix},3}^{\mathrm{2nd}}$ 
or $\psi_{\mathrm{orth},2}^{\mathrm{2nd}}$, we obtain very similar results as those from Figure~\ref{fig:FrozenDampedNagumoTraveling1Front}. 
The profile again becomes stationary, the acceleration $\mu_2$ converges to zero, and the speed $\mu_1$ approaches to an 
asymptotic value. Since we expect $v_t(t)\to 0$ and $\mu_{1,t}(t)\to 0$ as $t\to\infty$, we use these quantities as an indicator checking whether 
the solution has become stationary. 
Figure~\ref{fig:DampedNagumoFrozen1FrontPhaseConditions} shows the time evolution of 
$\left\|v_t\right\|_{L^2}$ and $|\mu_{1,t}|$ when solving \eqref{equ:2.45} for different phase conditions. While the phase conditions of index $2$ and $3$
behave as expected, the index $1$ formulation yields small but oscillating
values for the norms of $v_t$ and $\mu_{1,t}$. We attribute this behavior to the fact, that our adaptive solver enforces the differentiated conditions
\eqref{equ:2.17}, \eqref{equ:2.16_orth}, but does not control $v_t,\mu_t$ directly. Further investigations 
show that the consistency condition for $\mu_2^0$ does not really affect the numerical results for the different phase conditions. 
Therefore, in the next example we do not compute the expression for
$\mu_2^0$ but use the expected limiting 
value as initial datum $\mu_2^0=0$. 
\end{example}

\begin{example}[FitzHugh-Nagumo wave system] \label{exa2}
Consider the $2$-dimensional parabolic FitzHugh-Nagumo system, \cite{FitzHugh1961},
\begin{align} 
  \label{equ:2.40}
  u_t = \tilde{A}u_{xx}+g(u), \;x\in\R,\,t\ge 0,
\quad \tilde{A}=\begin{pmatrix}1&0\\0&\rho\end{pmatrix}, \;
g(u)=\begin{pmatrix}u_1-\frac{1}{3}u_1^3-u_2\\\phi(u_1+a-bu_2)\end{pmatrix},
\end{align}
with $u=u(x,t)\in\R^2$ and positive parameters
$\rho,a,b,\phi\in\R$. Equation \eqref{equ:2.40} is known to exhibit traveling
wave solutions in a wide range of parameters, but there are apparently no
explicit formulas. For the values
\begin{equation} \label{equ:2.40a}
\rho=0.1,\quad  a=0.7, \quad \phi=0.08,\quad  b=0.8
\end{equation}
one finds a traveling pulse with
\begin{align} 
  \label{equ:2.41}
  w_{\pm} \approx (-1.19941,\,-0.62426)^{\top}, \quad
c_{\star} \approx -0.7892.
\end{align}
For the same   $\rho, a, \phi$ but $b=3$, there is a traveling front 
with asymptotic states and velocity given by
\begin{align*} 
  w_- \approx (1.18779,\,0.62923)^{\top},\quad w_+ \approx (-1.56443,\,-0.28814)^{\top}, \quad
c_{\star} \approx -0.8557.
\end{align*}
Applying Proposition \ref{pro1}(i) with $M=\varepsilon I_2$  requires
 the equality $\tilde{A} + c_{\star}^2 M = k^2 A$, i.e.
\begin{align*}
  1+c_{\star}^2\varepsilon=k^2A_{11}, \quad \rho+c_{\star}^2\varepsilon=k^2A_{22}, 
\quad A_{12}=A_{21}=0.
\end{align*}
 Setting
$A_{11}:=1$ und using parameter values from \eqref{equ:2.40a},  Proposition~\ref{pro1}(i) shows that the corresponding FitzHugh-Nagumo wave system
\begin{align} 
  \label{equ:2.43}
  Mu_{tt}+Bu_t = Au_{xx} + g(u),\;x\in\R,\,t\ge 0,
\end{align}
with
\begin{align*}
  M=\varepsilon I_2,\quad B=I_2,\quad 
  A=\mathrm{diag}(1,\tfrac{\rho+c_{\star}^2\varepsilon}{1+c_{\star}^2\varepsilon}),\quad 
  k=\sqrt{1+c_{\star}^2\varepsilon},\quad 
  \varepsilon>0,\quad \rho,c_{\star}\text{ given}
\end{align*}
has a traveling pulse (or a traveling front) solution with a scaled
profile $v_{\star}$, limits $v_{\pm}=w_{\pm}$, and velocity $\mu_{\star}=\frac{c_{\star}}{k}$.

In the following we show the computations for the traveling pulse.
Results for the traveling front are very similar and are not displayed
here.  In the frozen and the nonfrozen case, we choose 
 $\varepsilon=10^{-2}$ and parameter value \eqref{equ:2.40a}. Space and time
are  discretized as in Example~\ref{exa1}.
Figure~\ref{fig:DampedFitzHughNagumoTraveling1Pulse} shows the time
evolution of the traveling pulse solution $u=(u_1,u_2)^T$ of
\eqref{equ:2.43} on the spatial domain $(-50,50)$ with homogeneous 
Neumann boundary conditions. The initial data are
\begin{align}
  \label{equ:2.44a}
  u_0(x)=(\tfrac{1}{\pi}\arctan(x)+\tfrac{1}{2},\,0)^{\top}+v_{\pm},\quad v_0(x)=(0,\,0)^{\top},\quad x\in\R,
\end{align}
where $v_{\pm}=w_{\pm}$ is the asymptotic state from \eqref{equ:2.41}.

\begin{figure}[ht]
  \centering
  \subfigure[]{\includegraphics[height=3.8cm] {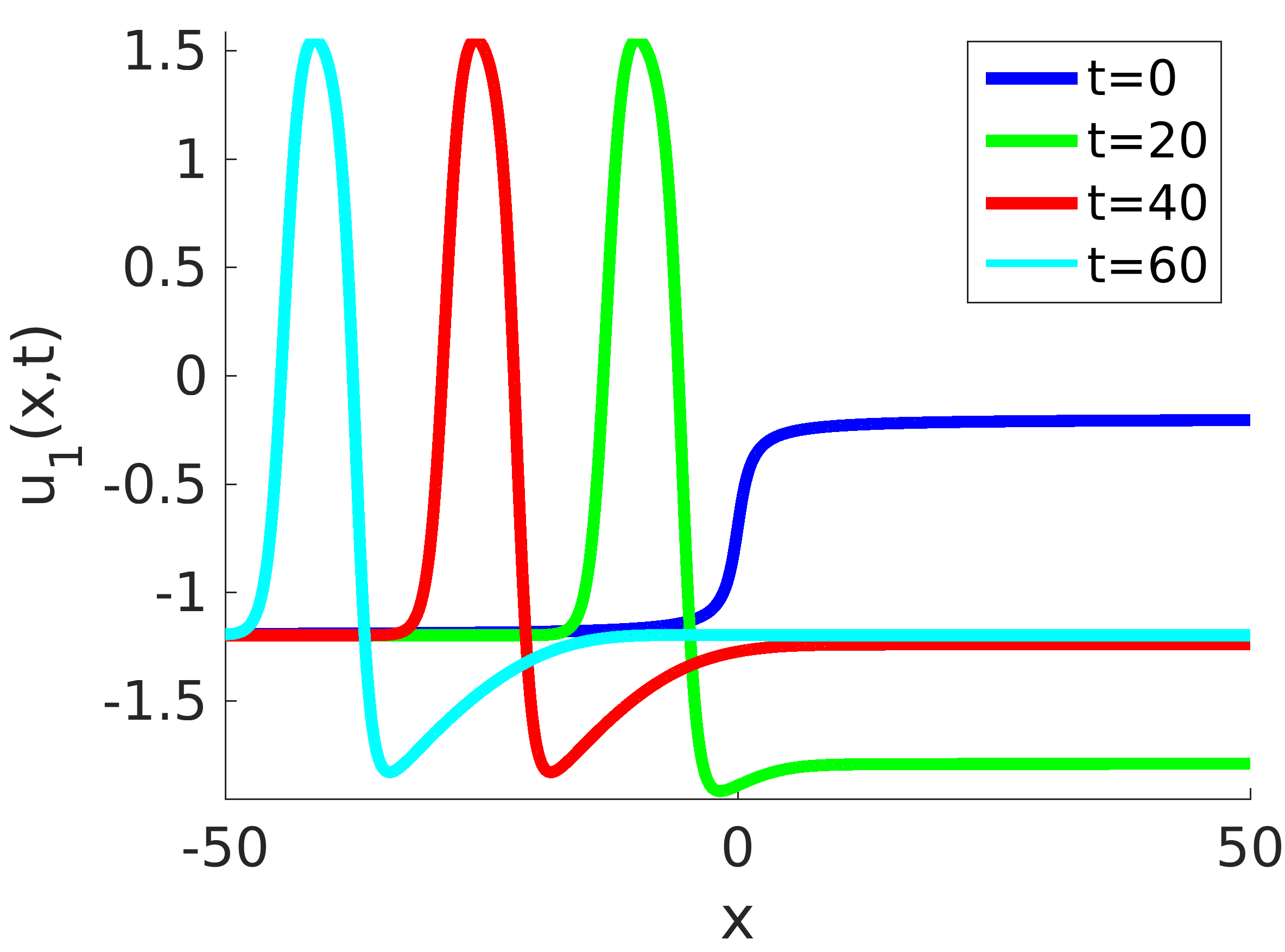}\label{fig:DampedFitzHughNagumoTraveling1PulseU1}}
  \subfigure[]{\includegraphics[height=3.8cm] {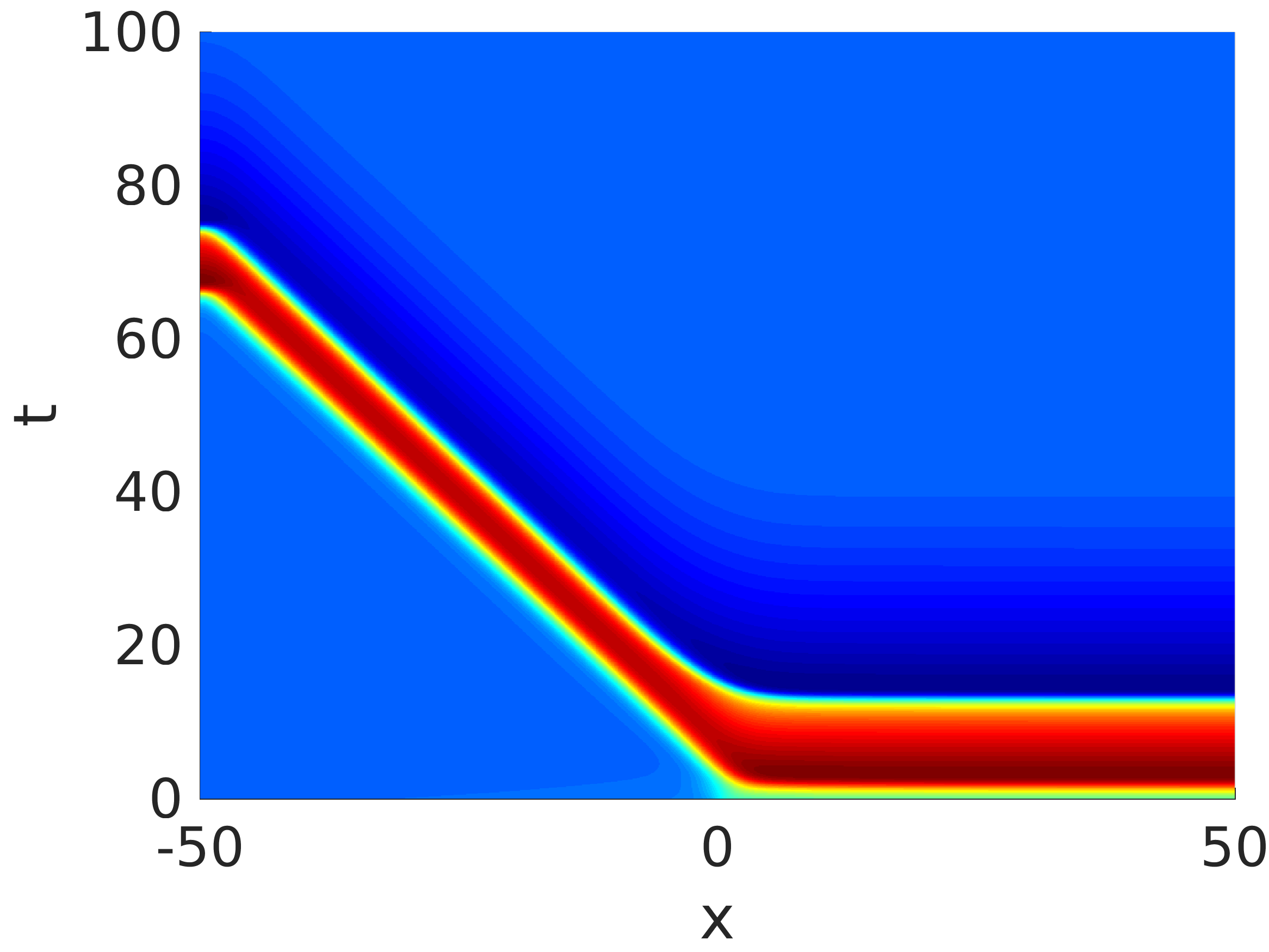} \label{fig:DampedFitzHughNagumoTraveling1PulseSpaceTimeU1}}
  \caption{Traveling pulse of FitzHugh-Nagumo wave system
    \eqref{equ:2.43} at different time instances for $u_1$ (a) as well as its time evolutions (c) for parameters $\varepsilon=10^{-2}$, $\rho=0.1$, $a=0.7$, $\phi=0.08$ and $b=0.8$.}
  \label{fig:DampedFitzHughNagumoTraveling1Pulse}
\end{figure} 


\begin{figure}[ht]
  \centering
  \subfigure[]{\includegraphics[height=3.8cm] {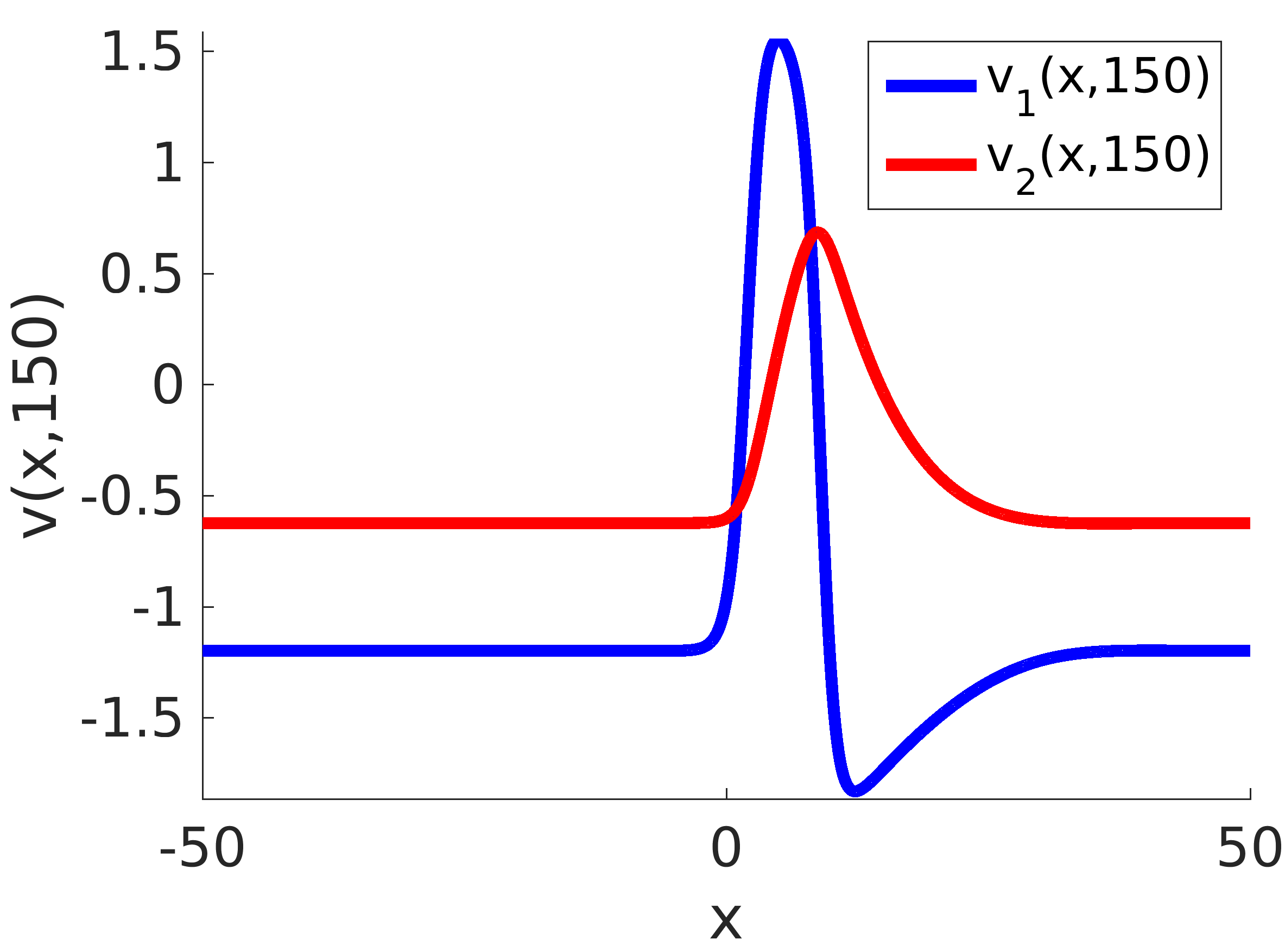}\label{fig:DampedFitzHughNagumoFrozen1PulseProfile}}
  \subfigure[]{\includegraphics[height=3.8cm] {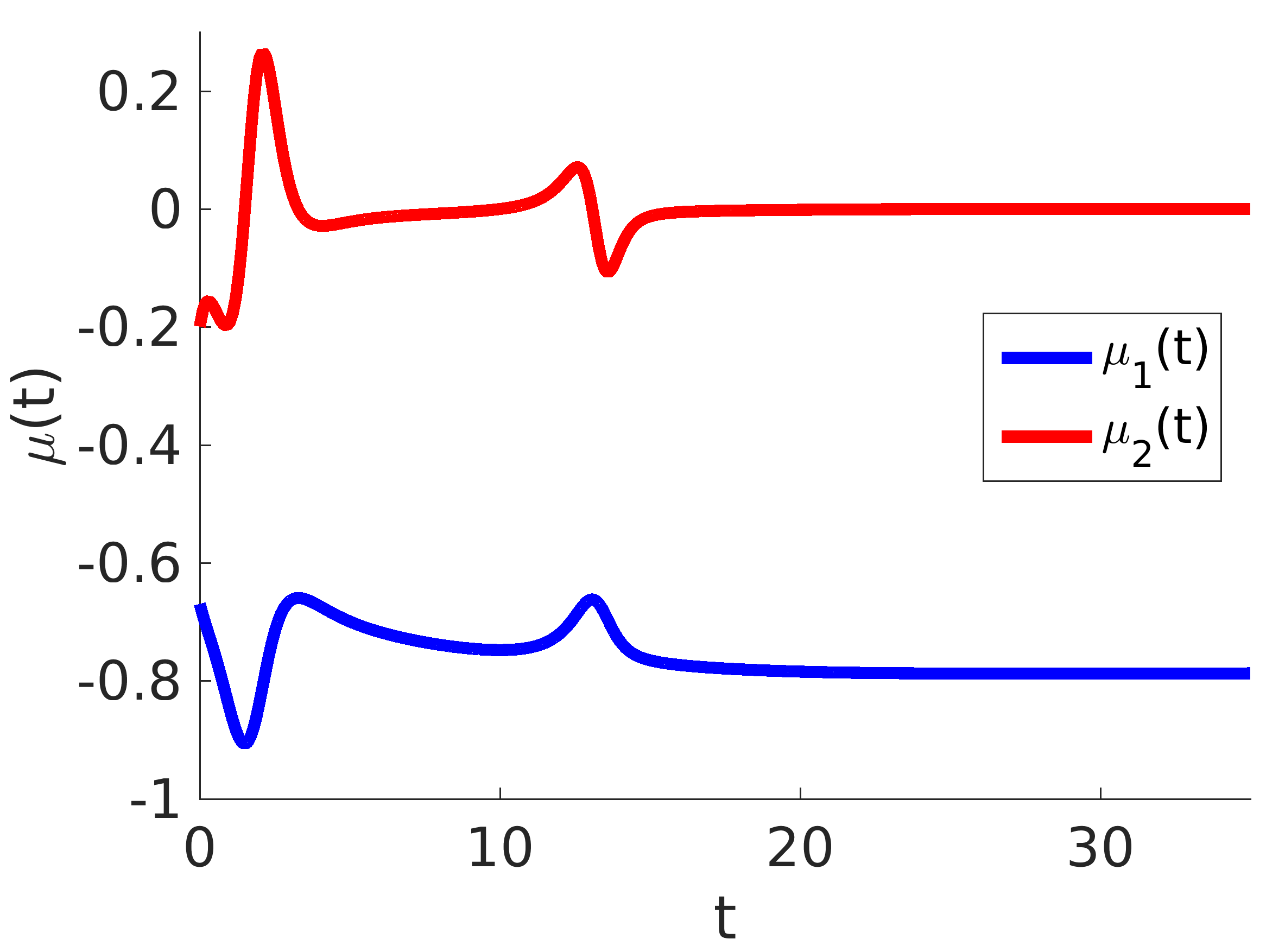} \label{fig:DampedFitzHughNagumoFrozen1PulseVel}}
  \subfigure[]{\includegraphics[height=3.8cm] {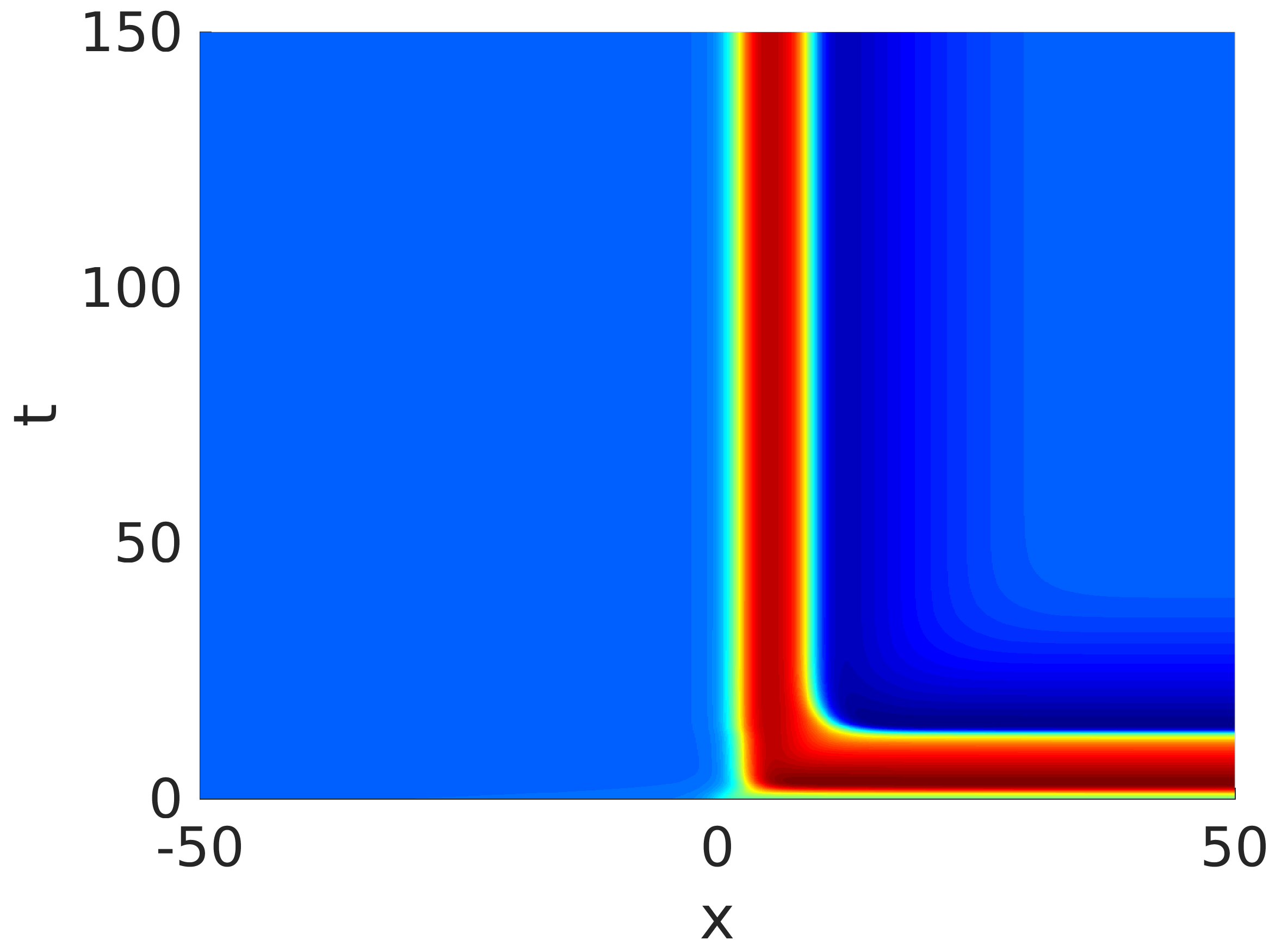}\label{fig:DampedFitzHughNagumoFrozen1PulseU1}}
  \caption{Solution of the frozen FitzHugh-Nagumo wave system \eqref{equ:2.19}: Approximation of profile components $v_1(x,150)$, $v_2(x,150)$ (a), and time evolutions of 
  velocity $\mu_1$ and acceleration $\mu_2$ (b) and of the profile's component $v_1$ (c) for parameters $\varepsilon=10^{-2}$, $\rho=0.1$, $a=0.7$, 
  $\phi=0.08$ and $b=0.8$.}
  \label{fig:FrozenDampedFitzHughNagumoTraveling1Pulse}
\end{figure}


Next consider for the same parameter values the corresponding frozen
FitzHugh-Nagumo wave system
\begin{subequations}    \label{equ:2.46}
  \begin{align}
     \label{equ:2.46a}
     &\begin{aligned}
      M v_{tt} + B v_t &= (A-\mu_1^2 M)v_{\xi\xi} +
      2\mu_1 M v_{\xi,t} +
      (\mu_2 M+\mu_1 B)v_{\xi} + g(v),\\
      \mu_{1,t} &= \mu_2, \quad \gamma_t=\mu_1,
    \end{aligned}&t\geqslant 0,\\
    & 0 = \bigl\langle v_t(\cdot,t),\hat{v}_\xi\bigr\rangle_{L^2(\R,\R)}, &t\geqslant 0,\label{equ:2.46b}\\
    &\begin{aligned}
      v(\cdot,0) &= u_0,\quad v_t(\cdot,0) = v_0+\mu_1^0 u_{0,\xi}, \quad
      \mu_1(0) = \mu_1^0, \quad
      \gamma(0) = 0.
    \end{aligned}
  \end{align}
\end{subequations}
Figure~\ref{fig:FrozenDampedFitzHughNagumoTraveling1Pulse} shows the solution
$(v,\mu_1,\mu_2,\gamma)$ of \eqref{equ:2.46} on the spatial domain
$(-50,50)$, with homogeneous Neumann boundary conditions, initial data 
$u_0$, $v_0$ from \eqref{equ:2.44a}, and reference function $\hat{v}=u_0$. 
For the computation we used again the fixed phase condition 
$\psi_{\mathrm{fix},2}^{\mathrm{2nd}}(v_t)$ from \eqref{equ:2.16} with 
consistent intial data for $\mu_1^0$. Note that $v_0=0$ from \eqref{equ:2.44a} 
implies $\mu_1^0=0$ according to \eqref{equ:2.13consist}. We further set 
$\mu_2^0=0$ which does not satisfy the consistency condition 
\eqref{equ:2.14consist}. 
Time and space discretization are done as in
the nonfrozen case. Again the profile quickly stabilizes and the
velocity and the acceleration reach their asymptotic values. 

\begin{figure}[ht]
  \centering
  \subfigure[]{\includegraphics[height=3.8cm] {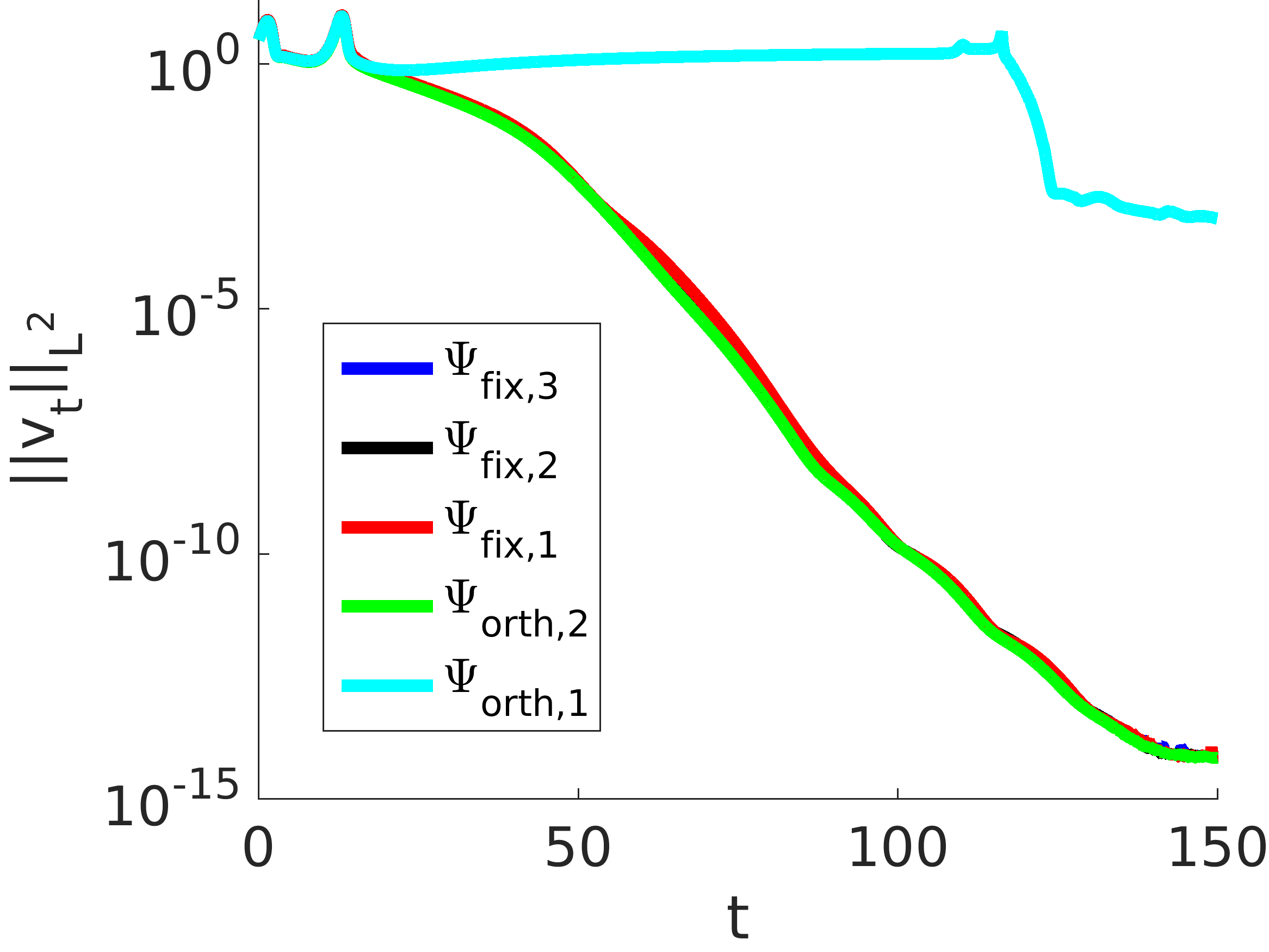}\label{fig:DampedFitzHughNagumoFrozen1PulseVt}}
  \subfigure[]{\includegraphics[height=3.8cm] {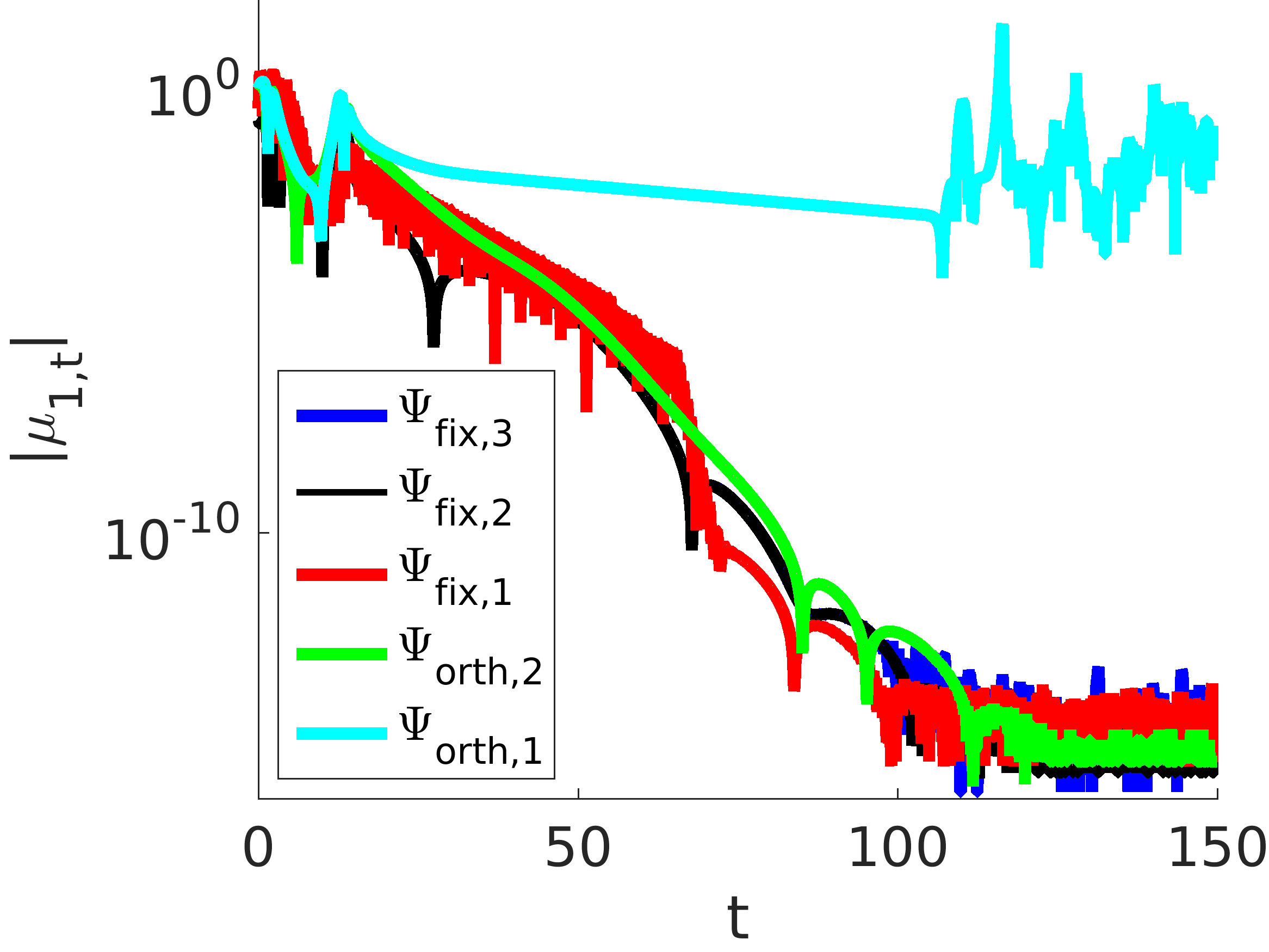} \label{fig:DampedFitzHughNagumoFrozen1PulseMu1t}}
  \caption{Comparison of the phase conditions for the frozen FitzHugh-Nagumo wave system \eqref{equ:2.46}: Time evolution of $\left\|v_t\right\|_{L^2}$ (a) and $|\mu_{1,t}|$ (b) for 
parameters $\varepsilon=10^{-2}$, $\rho=0.1$, $a=0.7$, $\phi=0.08$ and $b=0.8$.}
  \label{fig:DampedFitzHughNagumoFrozen1PulsePhaseConditions}
\end{figure} 
Finally, Figure~\ref{fig:DampedFitzHughNagumoFrozen1PulsePhaseConditions} shows that 
similar results are obtained if we replace the phase condition $\psi_{\mathrm{fix},2}^{\mathrm{2nd}}$ 
in the frozen FitzHugh-Nagumo wave system \eqref{equ:2.46} by $\psi_{\mathrm{fix},3}^{\mathrm{2nd}}$, 
$\psi_{\mathrm{fix},1}^{\mathrm{2nd}}$, $\psi_{\mathrm{orth},2}^{\mathrm{2nd}}$, or even by $\psi_{\mathrm{orth},1}^{\mathrm{2nd}}$. 
Contrary to our first example, the fixed phase condition of index $1$ provides good results in this case, while the index $1$ formulation of the orthogonal phase condition $\psi_{\mathrm{orth},1}^{\mathrm{2nd}}$ 
continues to show small oscillations of the time derivatives. 
\end{example}

%
%
\sect{Spectra and eigenfunctions of traveling waves}
\label{sec:4}
In this section we study the spectrum of the quadratic operator
polynomial (cf. \eqref{equ:2.6})
\begin{equation}
  \begin{aligned}
 \label{equ:4.1}
    \PL(\lambda) := \lambda^2 P_2 + \lambda P_1 + P_0, \quad \lambda
    \in \C.
  \end{aligned}
\end{equation}
Here the differential operators $P_j$ are defined by
\begin{equation} \label{equ:defPj}
  \begin{aligned}
      P_2 = M, \; \;
      P_1 = -D_3f(\star) - 2\mu_{\star}M\partial_{\xi}, \; \;
      P_0 = -(A-\mu_{\star}^2 M)\partial_{\xi}^2 
      + (\mu_{\star}D_3f(\star)-D_2f(\star))\partial_{\xi} - D_1f(\star),
  \end{aligned}
\end{equation}
where $(\star)=(v_{\star},v_{\star,\xi}, - \mu_{\star}v_{\star,\xi})$ and $v_{\star}$,
$\mu_{\star}$ denote the profile and velocity
of a traveling wave solution
$u_{\star}(x,t)=v_{\star}(x-\mu_{\star}t)$ of \eqref{equ:2.1}. Note
that $P_j$ is a differential operator of order $2-j$ for $j=0,1,2$. In
the following we recall some standard notions of point and essential
spectrum for operator polynomials.
\begin{definition}
  \label{def:3.1}
  Let $(X,\left\|\cdot\right\|_X)$ and $(Y,\left\|\cdot\right\|_Y)$ be
  complex Banach spaces and let 
  $\PL(\lambda)=\sum_{j=0}^q P_j \lambda^j, \lambda \in \C$ be an
  operator polynomial with linear 
  continuous coefficients $P_j:Y\rightarrow X$, $j=0,\ldots,q$.
  \begin{itemize}[leftmargin=0.65cm]\setlength{\itemsep}{0cm}
  \item[(a)]The resolvent set $\rho(\PL)$ and  the spectrum $\sigma(\PL)$ are defined by
    \begin{align*}
\rho(\PL)=\{\lambda \in \C: \PL(\lambda)\; \text{is bijective and} \quad
    \PL(\lambda)^{-1}:X\rightarrow Y \;\text{is bounded}\}, \;
    \sigma(\PL):=\C\backslash\rho(\PL).
\end{align*}
    \item[(b)] $\lambda_0\in\sigma(\PL)$ is called isolated
    if there is $\varepsilon >0$ such that $\lambda \in \rho(\PL)$ for
    all $\lambda_0\ne \lambda\in\C$ with
    $|\lambda-\lambda_0|<\varepsilon$.
  \item[(c)] If $\PL(\lambda_0) y_0=0$ for some $\lambda_0 \in C$ and
    $y_0\in Y\setminus\{0\}$, then $\lambda_0$ is called an eigenvalue with eigenvector $y_0$.
      The eigenvalue
    $\lambda_0$ has finite multiplicity if $\dim(\kernel(\PL(\lambda_0))) < \infty$  and if there is a maximum number $n\in \N$, for
    which polynomials $y(\lambda)=\sum_{j=0}^r (\lambda-\lambda_0)^j
    y_j$ exist in $Y$ satisfying
    \begin{align} \label{equ:rootprop}
      y_0 \neq 0, \quad (\PL y)^{(\nu)}(\lambda_0)=0, \quad
      \nu=0,\ldots,n-1.
    \end{align}
This maximum number $n=n(\lambda_0)$ is  called the maximum partial multiplicity,
and $\dim(\kernel(\PL(\lambda_0)))$ is called the geometric multiplicity 
of $\lambda_0$.
     \item[(d)] The point spectrum is defined by
   \begin{align*}
   \sigma_{\mathrm{point}}(\PL)=\{\lambda\in\sigma(\PL):
    \;\text{$\lambda$ is isolated eigenvalue of finite
      multiplicity}\}.
  \end{align*}
Points in $\rho(\PL) \cup \sigma_{\mathrm{point}}(\PL)$ are called normal,
   and the essential spectrum of $\PL$ is defined by 
  \begin{align*}
    \sigma_{\mathrm{ess}}(\PL):=\{\lambda\in\C:\;\text{$\lambda$ is
      not a normal point of $\PL$}\}.
  \end{align*}
  \end{itemize}
 \end{definition}
\begin{remark}\label{rem:3.2}
There is no loss of generality in assuming the root polynomials in (c) to
be of the form $y(\lambda)=\sum_{j=0}^{n-1} (\lambda- \lambda_0)^j y_j$.
For if $r<n-1$ we simply set $y_j=0,j=r+1,\ldots,n-1$. And if $r\ge n$
we subtract from $y$  the term $\sum_{j=n}^{r} (\lambda- \lambda_0)^j y_j$ 
which has $\lambda_0$ as a zero  of order at least $n$ and thus does not
change the root property \eqref{equ:rootprop}. The eigenvalue $\lambda_0$ is simple iff the geometric and the 
maximum partial multiplicity are equal to $1$.
 In this case $\kernel(\PL(\lambda_0))=\mathrm{span}(y_0)$ for some $y_0\neq 0$ and
  $\PL'(\lambda_0)y_0 \notin \range(\PL(\lambda_0))$. 
  For more details on root polynomials, partial
  and algebraic multiplicities we refer to 
  \cite{KozlovMazja1999,Markus1988,MennickenMoeller2003}.
     Our definition of essential spectrum follows
    \cite{Henry1981}.
\end{remark}
By definition, the spectrum $\sigma(\PL)$ of $\PL$ can be decomposed
into its point spectrum and its essential spectrum
\begin{equation*}
  \sigma(\PL)=\sigma_{\mathrm{ess}}(\PL)\, \dot{\cup}\, 
\sigma_{\mathrm{point}}(\PL).
\end{equation*}
The function spaces underlying the definition of spectra are
subspaces of $L^2(\R,\R^m)$ which will be specified in Section~\ref{sec:3}
and Appendix \ref{sec:A}.
In this section we carry out formal calculations
without reference to a specific function space.
\subsection{Point spectrum  on the imaginary axis}
\label{subsec:4.1}
Applying $\partial_{\xi}$ to the traveling wave equation
\eqref{equ:2.4},
leads to the equation
\begin{equation*}
  \begin{aligned}
    \begin{split}
      0 = & (A-\mu_{\star}^2 M)v_{\star,\xi\xi\xi} +
      D_2f(\star)v_{\star,\xi\xi} + 
      D_1f(\star)v_{\star,\xi} - \mu_{\star}D_3f(\star)v_{\star,\xi \xi} =
      -P_0 v_{\star,\xi},\,\xi\in\R,
    \end{split}
  \end{aligned}
\end{equation*}
provided that $v_{\star}\in C^3(\R,\R^m)$ and $f\in C^1(\R^{3m},\R^m)$.
Therefore, $w=v_{\star,\xi}$ solves the quadratic eigenvalue
problem $\PL(\lambda)w=0$ for $\lambda=0$, and $w=v_{\star,\xi}$ is an
eigenfunction if the wave profile $v_{\star}$ is nontrivial (i.e. not
constant).  This behavior is to be expected since the original
equation is equivariant with respect to the shift, and the spatial
derivative $\partial_{\xi}$ is the generator of shift equivariance.
\begin{proposition}[Point spectrum of traveling waves]\label{thm:4.1}
  Let $v_{\star}\in C^3(\R,\R^m)$, $\mu_\star$ be a nontrivial
  classical solution of \eqref{equ:2.4} and $f\in
  C^1(\R^{3m},\R^m)$. Then $\lambda=0$ is an eigenvalue with
  eigenfunction $v_{\star,\xi}$ of the quadratic eigenvalue problem
  $\PL(\lambda)w=0$.  In particular, $0\in  \sigma_{\mathrm{point}}(\PL)$.
\end{proposition}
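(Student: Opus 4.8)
The plan is to verify by a direct computation that $v_{\star,\xi}$ lies in $\kernel(\PL(0))$, which is precisely the identity $P_0 v_{\star,\xi}=0$ recorded just before the statement, and then to address isolatedness and finite multiplicity separately. First I would note that, since $v_{\star}\in C^3(\R,\R^m)$ and $f\in C^1(\R^{3m},\R^m)$, the right-hand side of the traveling wave equation \eqref{equ:2.4} is continuously differentiable in $\xi$, so differentiating \eqref{equ:2.4} once with respect to $\xi$ is legitimate. Applying the chain rule and collecting terms — with all arguments of the derivatives of $f$ evaluated at $(\star)=(v_{\star},v_{\star,\xi},-\mu_{\star}v_{\star,\xi})$ — yields
\begin{equation*}
  0 = (A-\mu_{\star}^2 M)v_{\star,\xi\xi\xi} + \bigl(D_2f(\star)-\mu_{\star}D_3f(\star)\bigr)v_{\star,\xi\xi} + D_1f(\star)v_{\star,\xi},
\end{equation*}
and, by the definition \eqref{equ:defPj} of $P_0$, the right-hand side equals $-P_0 v_{\star,\xi}$.

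Next I would observe that $\PL(0)=0^2 P_2+0\cdot P_1+P_0=P_0$, so the displayed identity reads $\PL(0)v_{\star,\xi}=0$. Since the profile $v_{\star}$ is nontrivial, i.e. non-constant, we have $v_{\star,\xi}\not\equiv 0$, so $v_{\star,\xi}$ is a genuine eigenvector in the sense of Definition~\ref{def:3.1}(c) and $\lambda=0$ is an eigenvalue of $\PL$. This part of the argument is a routine differentiation and poses no difficulty; it simply reflects the shift equivariance of \eqref{equ:2.1}, with $\partial_{\xi}$ as the generator.

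The only point that is not purely formal is the concluding claim $0\in\sigma_{\mathrm{point}}(\PL)$, i.e. that $\lambda=0$ is an \emph{isolated} eigenvalue of \emph{finite} multiplicity rather than a point of $\sigma_{\mathrm{ess}}(\PL)$; this requires fixing the underlying function spaces (done in Section~\ref{sec:3} and Appendix~\ref{sec:A}) and is where the real work lies. The plan here is to invoke, once the first order hyperbolic reformulation of Section~\ref{sec:3} is in place, that well-posedness (hyperbolicity) of the principal part of \eqref{equ:2.1} together with the hypotheses on the spectra of the limiting constant-coefficient operators at $v_{\pm}$ make $\lambda\mapsto\PL(\lambda)$ a holomorphic family of Fredholm operators of index zero on a neighbourhood of the imaginary axis; the analytic Fredholm theory then forces the spectrum to be discrete there, with each such eigenvalue of finite algebraic multiplicity, so that $0$ is isolated and of finite multiplicity and hence lies in $\sigma_{\mathrm{point}}(\PL)$. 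Accordingly, the main obstacle is not the computation but the consistent transfer of the abstract notions in Definition~\ref{def:3.1} to the concrete Fredholm properties established later in the paper.
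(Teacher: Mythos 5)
Your computation is exactly the paper's proof. The identity you display by differentiating \eqref{equ:2.4} is precisely the one the paper records immediately before the proposition, and the conclusion that $\lambda=0$ is an eigenvalue of $\PL$ with eigenfunction $v_{\star,\xi}\neq 0$ follows just as you say; the proposition carries no separate proof beyond this.

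Your third paragraph, however, points at something the paper does \emph{not} do, and it is worth being precise about it. The section containing the proposition explicitly announces that it carries out ``formal calculations without reference to a specific function space'', and the clause $0\in\sigma_{\mathrm{point}}(\PL)$ is asserted at that formal level rather than verified against Definition~\ref{def:3.1}(d). You are right that isolatedness and finite multiplicity are the genuine content of that clause, but note that they cannot be obtained from the proposition's stated hypotheses at all: if, for instance, $\det D_1f(v_{\pm})=0$, then $\lambda=0$, $\omega=0$ solves the dispersion relation \eqref{equ:4.12}, so $0\in\sigma_{\mathrm{disp}}(\PL)\subseteq\sigma_{\mathrm{ess}}(\PL)$ by Proposition~\ref{thm.4.2} and $0$ is not a normal point. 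Hence the ``in particular'' sentence, read literally against Definition~\ref{def:3.1}, requires something like $0\notin\sigma_{\mathrm{disp}}(\PL)$ in addition to $v_\star\in C^3$, $f\in C^1$ and nontriviality. The repair you sketch --- Fredholmness of index zero near the imaginary axis plus analytic Fredholm theory --- is essentially what the paper supplies later, in Appendix~\ref{sec:A}, via the factorization \eqref{equ:factorization} and Propositions~\ref{propA1}--\ref{prop:specneu}, but only under the additional Assumptions~\ref{ass:4.reg} and~\ref{ass:dispstab}. So: for what the paper actually proves here your argument is identical; the extra step you flag is real, is not carried out in the proposition's proof, and is not available from its stated hypotheses alone.
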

As usual, further isolated eigenvalues are difficult to detect analytically,
and we refer to the extensive literature on solving quadratic eigenvalue problems and on locating zeros of the so-called Evans function, see e.g.
\cite{blr14,Sandstede2007}.


\begin{example}[Nagumo wave equation]\label{exa5}
  Recall from Example~\ref{exa1} that the Nagumo wave equation~\eqref{equ:2.38}
   has an explicit traveling front solution $u_{\star}(x,t)=v_{\star}(\xi)$, $\xi=x-\mu_{\star}t$,
  with $v_{\star}$ and $\mu_{\star}$ from \eqref{equ:2.39}, i.e. $v_{\star}$ and $\mu_{\star}$ solve the associated traveling wave equation
  \begin{equation*}
    0 = (1-\mu_{\star}^2\varepsilon)v_{\star,\xi\xi}(\xi) + \mu_{\star}
    v_{\star,\xi}(\xi) +
    v_{\star}(\xi)\left(1-v_{\star}(\xi)\right)\left(v_{\star}(\xi)-b\right),\quad\xi\in\R.
  \end{equation*}
  The quadratic eigenvalue problem for the linearization then reads as follows,
  \begin{equation*}
  [\PL(\lambda) w](\xi)=\varepsilon \left(\lambda-\mu_{\star}\partial_{\xi}
\right)^2w(\xi) +\left(\lambda-\mu_{\star}\partial_{\xi}\right)
 w(\xi) - w_{\xi \xi}(\xi)+\left(3v_{\star}^2(\xi) - 2(b+1)v_{\star}(\xi) -b\right)
 w(\xi) =0,\;\xi\in\R.
  \end{equation*}
  With $k$ from \eqref{equ:2.39}, it has the solution
  \begin{equation*}
    \lambda=0,\quad w(\xi)=v_{\star,\xi}(\xi) = \tfrac{k}{\sqrt{2}}\exp\left(-\tfrac{k\xi}{\sqrt{2}}\right)\left(1+\exp\left(-\tfrac{k\xi}{\sqrt{2}}\right)\right)^{-2},\quad \xi\in\R.
  \end{equation*}
\end{example}

\subsection{Essential spectrum and dispersion relation of traveling waves}
\label{subsec:4.2}
The essential spectrum of $\PL$ from \eqref{equ:4.1}, \eqref{equ:defPj}, is 
determined by the 
constant coefficient operators obtained by letting
$\xi \rightarrow \pm \infty$ in the coefficient operators $P_0,P_1$
 (recall $(\pm)=(v_{\pm},0,0)$),
\begin{equation} \label{equ:4.asyop}
   \begin{aligned}
    \PL^{\pm}(\lambda) = & \lambda^2 P_2 + \lambda P_1^{\pm} + P_0^{\pm},
     \quad \lambda \in \C, \\
     P_1^{\pm}= &-D_3f(\pm)- 2 \mu_{\star}M \partial_{\xi},
   \quad  P_0^{\pm} = -(A-\mu_{\star}^2 M)\partial_{\xi}^2
      + (\mu_{\star}D_3f(\pm)-D_2f(\pm))\partial_{\xi} -
    D_1f(\pm).
    \end{aligned}
\end{equation}
We seek bounded solutions $w$ of $\PL^{\pm}(\lambda)w=0$ by the
Fourier ansatz $w(\xi)= e^{i \omega \xi}z, z \in \C^m, |z|=1$ and
arrive at the following quadratic eigenvalue problem
\begin{equation*}
  \A_{\pm}(\lambda,\omega)z = \left(\lambda^2 A_2 + \lambda A_1^{\pm}(\omega) + A_0^{\pm}(\omega)\right)z = 0
\end{equation*}
with matrices
\begin{equation}
  \label{equ:4.11}
  A_2=M,\quad A_1^{\pm}(\omega)=-D_3f(\pm)-2i\omega\mu_{\star}M,
  \quad A_0^{\pm}(\omega) = \omega^2(A-\mu_{\star}^2 M)+
  i\omega(\mu_{\star}D_3f(\pm)-D_2f(\pm)) - D_1f(\pm).
\end{equation}
Every $\lambda\in\C$ satisfying the \begriff{dispersion relation}
\begin{equation}
  \label{equ:4.12}
  \det\left(\lambda^2 A_2 + \lambda A_1^{\pm}(\omega) + A_0^{\pm}(\omega)\right) = 0 
\end{equation}
for some $\omega\in\R$ and either sign, belongs to the essential spectrum of $\PL$. A proof of this statement
is obtained in the standard way by cutting off $w(\xi)$ at $\xi\notin
[n,2n]$
resp. $\xi \notin [-2n,-n]$
and letting $n \rightarrow \infty$. Then this contradicts
the continuity of the resolvent at $\lambda$ in appropriate function
spaces.
 This proves the following result:
 \begin{proposition}[Essential spectrum of traveling waves]\label{thm.4.2}
   Let $f\in C^1(\R^{3m},\R^m)$ with $f(v_{\pm},0,0)=0$ for some
   $v_{\pm}\in\R^m$.  Let $v_{\star}\in C^2(\R,\R^m)$, $\mu_\star$ be
   a nontrivial classical solution of \eqref{equ:2.4} satisfying
   $v_{\star}(\xi)\to v_{\pm}$ as $\xi\to\pm\infty$. 
  Then, the dispersion set
   set 
  \begin{equation} \label{equ:4.11a}
  \begin{aligned}
    \sigma_{\mathrm{disp}}(\PL) :=
    \left\{\lambda\in\C\mid\text{$\lambda$ satisfies \eqref{equ:4.12}
        for some $\omega\in\R$ and some sign $\pm$} \right\}
  \end{aligned}
  \end{equation}
  belongs to the essential spectrum $\sigma_{\mathrm{ess}}(\PL)$ of
  $\PL$. 
\end{proposition}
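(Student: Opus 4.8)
The plan is to exhibit, for each $\lambda\in\sigma_{\mathrm{disp}}(\PL)$, a \emph{singular Weyl sequence} for $\PL(\lambda)$ and to use that a normal point of $\PL$ admits no such sequence; since $\sigma_{\mathrm{ess}}(\PL)$ consists exactly of the non-normal points, this gives $\sigma_{\mathrm{disp}}(\PL)\subseteq\sigma_{\mathrm{ess}}(\PL)$. So fix $\lambda\in\sigma_{\mathrm{disp}}(\PL)$. By definition there are $\omega\in\R$, a sign (for definiteness $+$; the case $-$ is completely symmetric, cutting off near $-\infty$ instead of $+\infty$) and $z\in\C^m$, $|z|=1$, with $\A_{+}(\lambda,\omega)z=0$, cf.\ \eqref{equ:4.11}, \eqref{equ:4.12}. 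Inserting $w_\infty(\xi)=e^{i\omega\xi}z$ into the constant-coefficient operator $\PL^{+}(\lambda)$ from \eqref{equ:4.asyop} gives $\PL^{+}(\lambda)w_\infty=\bigl(\A_{+}(\lambda,\omega)z\bigr)e^{i\omega\xi}=0$, so $w_\infty$ is a bounded (non-$L^2$) solution of the asymptotic problem, and the strategy is to localize $w_\infty$ far out to the right, where the coefficients of $\PL$ are close to those of $\PL^{+}$.

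\textbf{Construction of the sequence.} I would pick smooth cut-offs $\chi_n\colon\R\to[0,1]$ with $\supp\chi_n\subset[n,2n]$, $\chi_n\equiv1$ on $[\tfrac{5}{4}n,\tfrac{7}{4}n]$ and $|\chi_n^{(\ell)}|\le Cn^{-\ell}$ for $\ell=0,1,2$, and set $w_n:=c_n\,\chi_n(\cdot)\,e^{i\omega(\cdot)}z$ with $c_n>0$ chosen so that $\|w_n\|_{L^2}=1$; the plateau forces $c_n\asymp n^{-1/2}$, and then $\|\chi_n'\|_{L^2}\le Cn^{-1/2}$, $\|\chi_n''\|_{L^2}\le Cn^{-3/2}$ give $\|w_n\|_{H^2}\le C(1+|\omega|^2)$. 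Since $\supp w_n\subset[n,2n]$ escapes to $+\infty$, for any fixed $\varphi$ one has $\langle w_n,\varphi\rangle\to0$, so $w_n\rightharpoonup0$ (weakly in $L^2$, and in $H^2$ by boundedness together with Rellich on bounded intervals); in particular $(w_n)$ has no norm-convergent subsequence.

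\textbf{The key estimate $\|\PL(\lambda)w_n\|_{L^2}\to0$.} Comparing \eqref{equ:defPj} with \eqref{equ:4.asyop}, the second-order principal parts of $\PL(\lambda)$ and $\PL^{+}(\lambda)$ coincide and the remainder has order $\le1$:
\[
  \PL(\lambda) = \PL^{+}(\lambda) + R(\cdot), \qquad
  R(\xi) = \sum_{j=1}^{3}\bigl(D_jf(\star)-D_jf(+)\bigr)\,Q_j,
\]
with the fixed operators $Q_1=-I_m$, $Q_2=-\partial_\xi$, $Q_3=\mu_\star\partial_\xi-\lambda I_m$ and $(\star)=(v_\star(\xi),v_{\star,\xi}(\xi),-\mu_\star v_{\star,\xi}(\xi))$. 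Since $v_\star(\xi)\to v_+$ and $v_{\star,\xi}(\xi)\to0$ as $\xi\to+\infty$ (the latter being the standard asymptotic behavior of a bounded $C^2$ profile at the equilibrium $v_+$ of the traveling wave equation \eqref{equ:2.4}), continuity of $D_1f,D_2f,D_3f$ yields $\varepsilon_n:=\sup_{\xi\in[n,2n]}\sum_j\|D_jf(\star)-D_jf(+)\|\to0$; together with the uniform bound $\|w_n\|_{H^1}\le C$ this gives $\|R(\cdot)w_n\|_{L^2}\le C\varepsilon_n\to0$. For the constant-coefficient part, expanding $\PL^{+}(\lambda)[\chi_n e^{i\omega\xi}z]$ by the Leibniz rule splits it into the bulk term $\chi_n\bigl(\A_{+}(\lambda,\omega)z\bigr)e^{i\omega\xi}=0$ and terms that each carry a factor $\chi_n'$ or $\chi_n''$ times a fixed matrix and $e^{i\omega\xi}z$; since $c_n\asymp n^{-1/2}$ and $\|\chi_n'\|_{L^2}+\|\chi_n''\|_{L^2}\le Cn^{-1/2}$, this part is $O(n^{-1})$ in $L^2$. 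Hence $\|\PL(\lambda)w_n\|_{L^2}\to0$.

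\textbf{Conclusion.} The sequence $(w_n)$ in the domain of $\PL(\lambda)$ satisfies $\|w_n\|_{L^2}=1$, $w_n\rightharpoonup0$ and $\|\PL(\lambda)w_n\|_{L^2}\to0$: a singular Weyl sequence for $\PL(\lambda)$. Were $\lambda$ a normal point, then $\PL(\lambda)$ would be a Fredholm operator of index $0$ between the relevant $L^2$-based spaces — immediate for $\lambda\in\rho(\PL)$, and for an isolated eigenvalue of finite multiplicity this is exactly the content of the Keldysh/Gohberg--Sigal theory of holomorphic Fredholm operator functions (cf.\ \cite{Markus1988,MennickenMoeller2003}) — hence bounded below modulo its finite-dimensional kernel $\kernel(\PL(\lambda))$; splitting $w_n$ along this kernel and a closed complement and letting $n\to\infty$ then contradicts $\|\PL(\lambda)w_n\|_{L^2}\to0$. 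Therefore $\lambda\in\sigma_{\mathrm{ess}}(\PL)$, and since $\lambda\in\sigma_{\mathrm{disp}}(\PL)$ was arbitrary the inclusion follows. The routine part is the Weyl-sequence construction; the main obstacle is the functional-analytic backbone — pinning down the concrete $L^2$-based spaces of Section~\ref{sec:3}/Appendix~\ref{sec:A} in which $\PL(\lambda)$ acts boundedly (and in which the weak-limit/Rellich step is valid), the Fredholm characterization of normal points, and the small but genuine point that a bounded profile approaches its asymptotic states together with its derivative.
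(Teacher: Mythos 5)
Your proposal is correct and follows essentially the same route as the paper, which only sketches the argument in two sentences (``cutting off $w(\xi)$ at $\xi\notin[n,2n]$ \dots and letting $n\to\infty$''); you have simply filled in the Weyl-sequence details, the splitting $\PL(\lambda)=\PL^{+}(\lambda)+R$, and the contradiction with normality that the paper leaves implicit. The two points you flag --- that one also needs $v_{\star,\xi}(\xi)\to 0$ for the coefficients to converge, and that normal points must be (semi-)Fredholm for the contradiction --- are genuine gaps in the paper's own sketch as well, and your handling of them is reasonable.
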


  In the general matrix case it is not easy
to analyze the shape of the algebraic set
$\sigma_{\mathrm{disp}}(\PL)$, since \eqref{equ:4.12}
amounts to finding the zeroes of a polynomial of degree $2m$. In view
of the stability results in Theorem~\ref{thm:4.17} and Theorem~\ref{thm:freezestab} our main interest is
in finding a spectral gap, i.e.\ a constant $\beta >0$ such that
\begin{equation} \label{equ:spectralgap}
\Re\lambda \le - \beta <0 \quad \text{for all} \quad 
\lambda \in\sigma_{\mathrm{disp}}(\PL).
\end{equation}
We discuss this condition for three subcases of the special structure \eqref{equ:2.1a}.
  \begin{itemize}[leftmargin=0.66cm]\setlength{\itemsep}{0.1cm}
    \item[(i)] \textbf{Parabolic case: ($\mathbf{M=0}$, $\mathbf{B=I_m}$, $\mathbf{C=0}$).} The dispersion relation \eqref{equ:4.12} reads 
  \begin{equation}
    \label{equ:4.12a}
    \det\left(\tilde{\lambda} I_m + \omega^2 A  - Dg(v_{\pm})\right) = 0,
\quad \tilde{\lambda}=\lambda-i\omega\mu_{\star},
  \end{equation}
and the corresponding eigenvalue problem may be written as
\begin{equation}
    \label{equ:4.12b}
    \tilde{\lambda}z = -\left(\omega^2 A - Dg(v_{\pm})\right)z,
\quad 0\neq z\in\C^m,\quad \tilde{\lambda}=\lambda-i\omega\mu_{\star}.
  \end{equation}
 Let us assume positivity of $A$ and $-Dg(v_{\pm})$  in the 
sense that
\begin{equation} \label{equ:Afpositive}
\Re z^HAz >0, \quad \Re z^H Dg(v_{\pm})z<0 \quad \text{for all} \; z \in \C^m.
\end{equation}
Multiplying \eqref{equ:4.12b} by $z^H$ and taking the real part, shows
that the solutions $\tilde{\lambda}$ of \eqref{equ:4.12a} have 
negative real parts and the gap is guaranteed. This is still true
if $A$ is nonnegative but has zero eigenvalues. Note that in this
case, equation 
\eqref{equ:2.7} is of mixed hyperbolic-parabolic type and the
nonlinear stability theory becomes considerably more involved,
see   \cite{RottmannMatthes2012c}.
  \item[(ii)] \textbf{Undamped hyperbolic case: ($\mathbf{M=I_m}$, $\mathbf{B=0}$, $\mathbf{C=0}$).} The dispersion relation \eqref{equ:4.12} reads 
  \begin{equation*}
    \det\left(\tilde{\lambda}^2 I_m + \omega^2 A- Dg(v_{\pm})\right) = 0, \quad
\tilde{\lambda}=\lambda-i\omega\mu_{\star}
  \end{equation*}
  Whenever $\lambda\in \C,\omega \in \R$ solve this system, so does
  the pair $-\lambda,-\omega$. Hence, the eigenvalues lie either on
  the imaginary axis or on both sides of the imaginary axis.
  Therefore, a spectral gap cannot exist. This is the Hamiltonian case, where
one can only 
  expect stability (but not asymptotic stability) of the wave. We refer to
 the local stability theory developed in
  \cite{GrillakisShatahStrauss1987},\cite{GrillakisShatahStrauss1990}
  (see also \cite{KapitulaPromislov2013} for a recent account).  Note
  that in this case the positivity assumption \eqref{equ:Afpositive}
  only guarantees $\Re \tilde{\lambda}^2 <0$, i.e.
  $\tfrac{\pi}{4}<|\mathrm{arg}(\tilde{\lambda})|\leqslant\tfrac{\pi}{2}$ for
  $\tilde{\lambda}=\lambda-i\omega\mu_{\star}$ and all eigenvalues $\lambda \in \sigma(\A(\cdot,\omega))$.
\item[(iii)] {\bf Scalar case: ($\mathbf{M=1}$, $\mathbf{B=\eta}$, $\mathbf{C=0}$).} It is instructive to discuss the dispersion
relation \eqref{equ:4.12} in the scalar case with $A=a$, $-Dg(v_{\pm})=\delta$ and real numbers $a,\eta,\delta >0$
\begin{equation} \label{equ:dispersescalar}
\tilde{\lambda}^2+ \eta \tilde{\lambda}+ a\omega^2 + \delta = 0,
\quad \tilde{\lambda}=\lambda-i \omega \mu_{\star}.
\end{equation}
 This case occurs with the Nagumo wave equation below. The solutions are
\begin{equation*}
\lambda= i \omega \mu_{\star} - \frac{\eta}{2} \pm \left(\frac{\eta^2}{4} -\delta 
- \omega^2 a  \right)^{1/2}, \quad \omega \in \R.
\end{equation*}
If $\eta^2 \le 4 \delta$, then all solutions $\lambda$ of
\eqref{equ:dispersescalar} lie on the vertical line $\Re \lambda =
-\frac{\eta}{2} < 0$.  A short discussion shows that they actually cover this line under the
assumption $\mu_{\star}^2 <a$, which corresponds to positivity of the matrix
$A-\mu_{\star}^2M$ occuring in  \eqref{equ:2.4}.
 If $\eta^2 > 4 \delta$ then the solutions
$\lambda$ of \eqref{equ:dispersescalar} lie again on this line
(resp.\ cover it if $\mu_{\star}^2 <a$) for values $|\omega|\ge
\omega_0:= (\frac{1}{a}(\frac{\eta^2}{4}-\delta))^{1/2}$. But for
values $|\omega| \le \omega_0$ they form the ellipse
\begin{equation} \label{equ:ellipse}
\frac{(\Re \lambda + \frac{\eta}{2})^2}{p_1^2} + \frac{(\Im \lambda)^2}{p_2^2}=1,\quad \text{with semiaxes} \quad p_1=a^{1/2} \omega_0, \; \; p_2 = |\mu_{\star}|\omega_0.
\end{equation}
 The rightmost point  of the ellipse
$-\beta:= -\frac{\eta}{2} +\left(\frac{\eta^2}{4}-\delta\right)^{1/2}$ is
still negative and therefore can be taken for the spectral gap
\eqref{equ:spectralgap}.
  \end{itemize}

\begin{example}[Spectrum of Nagumo wave equation]\label{exa6}
  As in Example~\ref{exa1}, consider the Nagumo wave equation~\eqref{equ:2.38}
  with coefficients 
  \begin{equation*}
    M=\varepsilon>0,\quad A=B=1,\quad C=0.
  \end{equation*}

 There is a  traveling front solution $u_{\star}(x,t)=v_{\star}(x-\mu_{\star}t)$ with $v_{\star}$, $\mu_{\star}$ from \eqref{equ:2.39}. 
 With the asymptotic  states $v_+=1$, $v_-=0$ and $g'(v_+)=b-1$, $g'(v_-)=-b$  from \eqref{equ:2.36}, we find the dispersion relation 
  \begin{equation*}
    \begin{aligned}
    \varepsilon\tilde{\lambda}^2 + \tilde{\lambda} + \omega^2+b=0
    \quad\text{or}\quad
    \varepsilon\tilde{\lambda}^2 + \tilde{\lambda}
    + \omega^2-b+1=0.
    \end{aligned}
  \end{equation*}
  The scalar case discussed above applies with the settings
  $\eta=\frac{1}{\varepsilon}=a$, $\delta_{\pm}=-
  \frac{g'(v_{\pm})}{\varepsilon}$. Thus the subset
  $\sigma_{\mathrm{disp}}(\PL)$ of the essential
  spectrum lies on the union of the line $\Re \lambda = -
  \frac{1}{2\epsilon}$ and possibly two ellipses defined by
  \eqref{equ:ellipse} with $\omega_0=\omega_{\pm}= \left(\frac{1}{4
      \varepsilon} + g'(v_{\pm})\right)^{1/2}$.  The ellipse belonging
  to $v_+$ occurs if $1-b < \frac{1}{4 \varepsilon}$, and the one
  belonging to $v_-$ occurs if $b < \frac{1}{4 \varepsilon}$.  Since
  $0<b<1$ both ellipses show up in
  $\sigma_{\mathrm{disp}}(\PL)$ if $\varepsilon \le
  \frac{1}{4}$. In any case, there is a gap beween the essential
  spectrum and the imaginary axis in the sense of
  \eqref{equ:spectralgap} with
\begin{align*} 
\beta= \frac{1}{2 \varepsilon}\left(1 -   \left(1 - 4 \varepsilon^2 \min(b,1-b)\right)^{1/2} \right).
\end{align*}

Figure~\ref{fig:DampedNagumo_1D_Traveling1Front_EssentialSpectrum}(a)
shows that piece of spectrum which is guaranteed by our
propositions at parameter values $\varepsilon=b=\frac{1}{4}$.  It is
subdivided into point spectrum (blue circle) determined by
Proposition~\ref{thm:4.1}, and essential spectrum (red lines) determined
by Proposition~\ref{thm.4.2}. There may be further isolated
eigenvalues. The numerical spectrum of the Nagumo wave   
on the spatial domain $[-R,R]$ and subject to periodic boundary conditions, 
is shown in Figure~\ref{fig:DampedNagumo_1D_Traveling1Front_EssentialSpectrum}(b) 
for $R=50$ and in Figure~\ref{fig:DampedNagumo_1D_Traveling1Front_EssentialSpectrum}(c) 
for $R=400$. Each of them consists of the approximations of the point spectrum 
(blue circle) and of the essential spectrum (red dots). The missing line 
inside the ellipse in Figure~\ref{fig:DampedNagumo_1D_Traveling1Front_EssentialSpectrum}(b) 
gradually appears numerically when enlarging the spatial domain, see 
Figure~\ref{fig:DampedNagumo_1D_Traveling1Front_EssentialSpectrum}(c).
The second ellipse only develops on even larger domains.
\end{example}

\begin{figure}[ht]
  \centering
  \subfigure[]{\includegraphics[height=3.3cm] {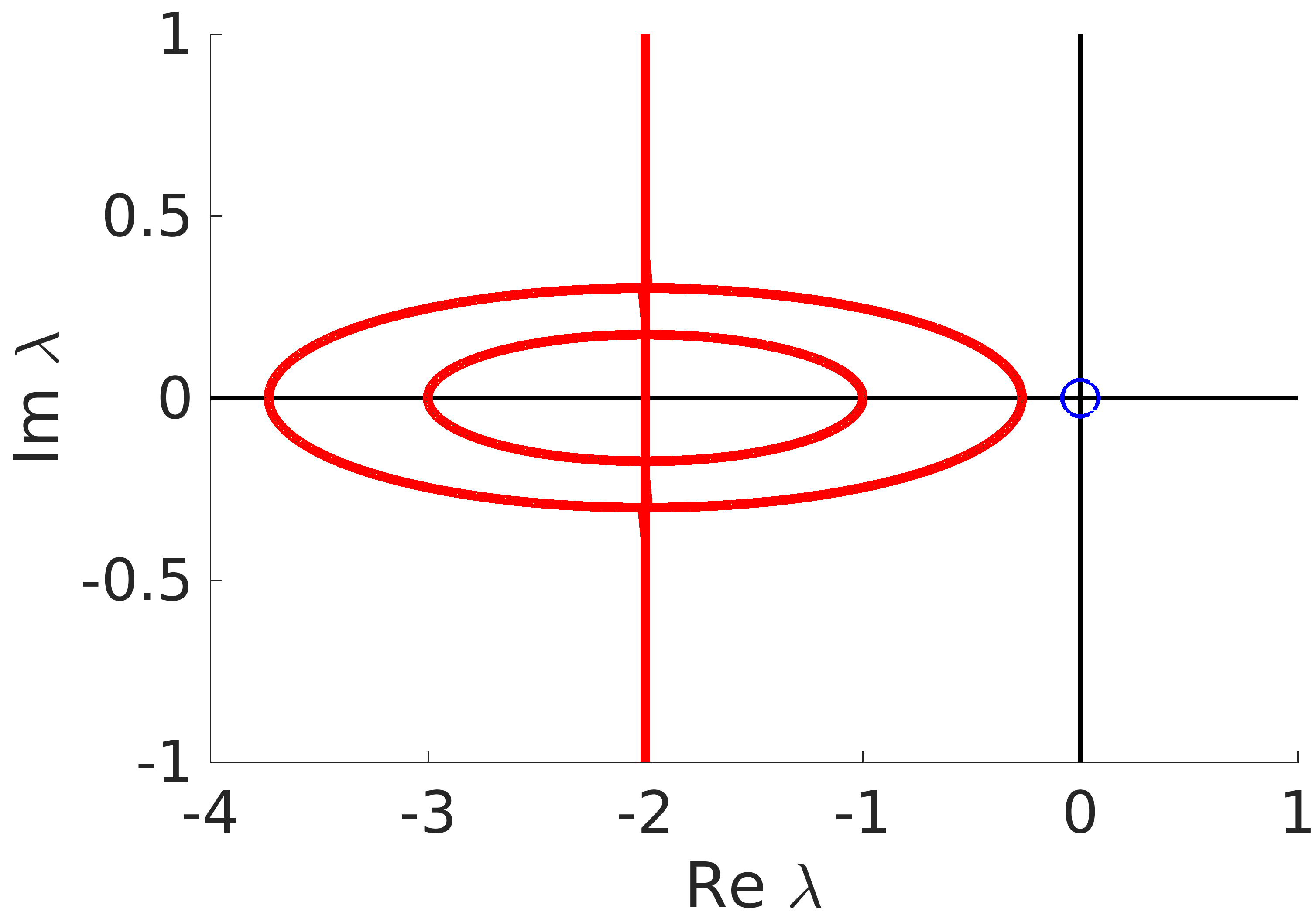}\label{fig:DampedNagumoEssentialSpectrum}}
  \subfigure[]{\includegraphics[height=3.3cm] {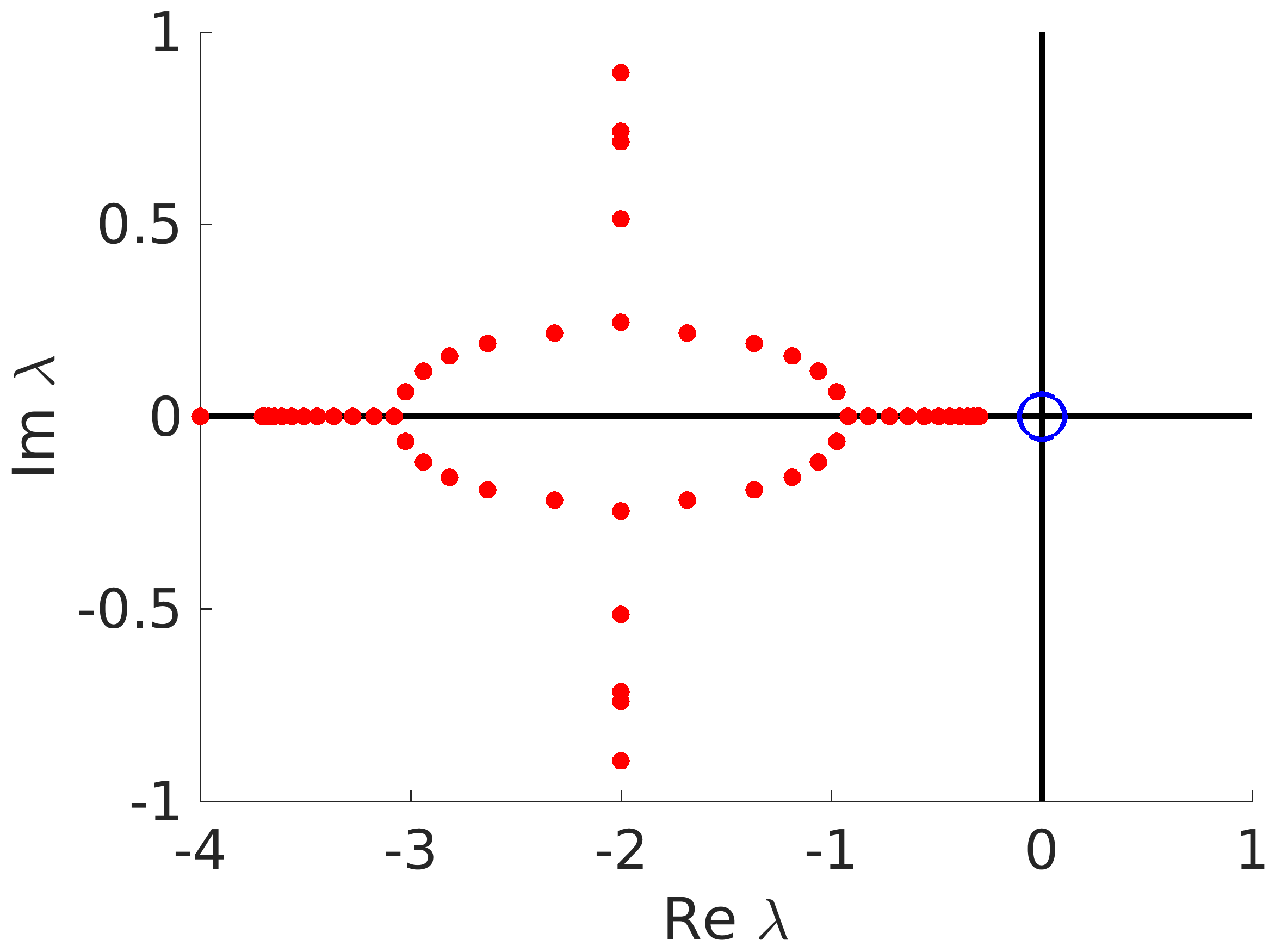} \label{fig:DampedNagumoNumericalSpectrum}}
  \subfigure[]{\includegraphics[height=3.3cm] {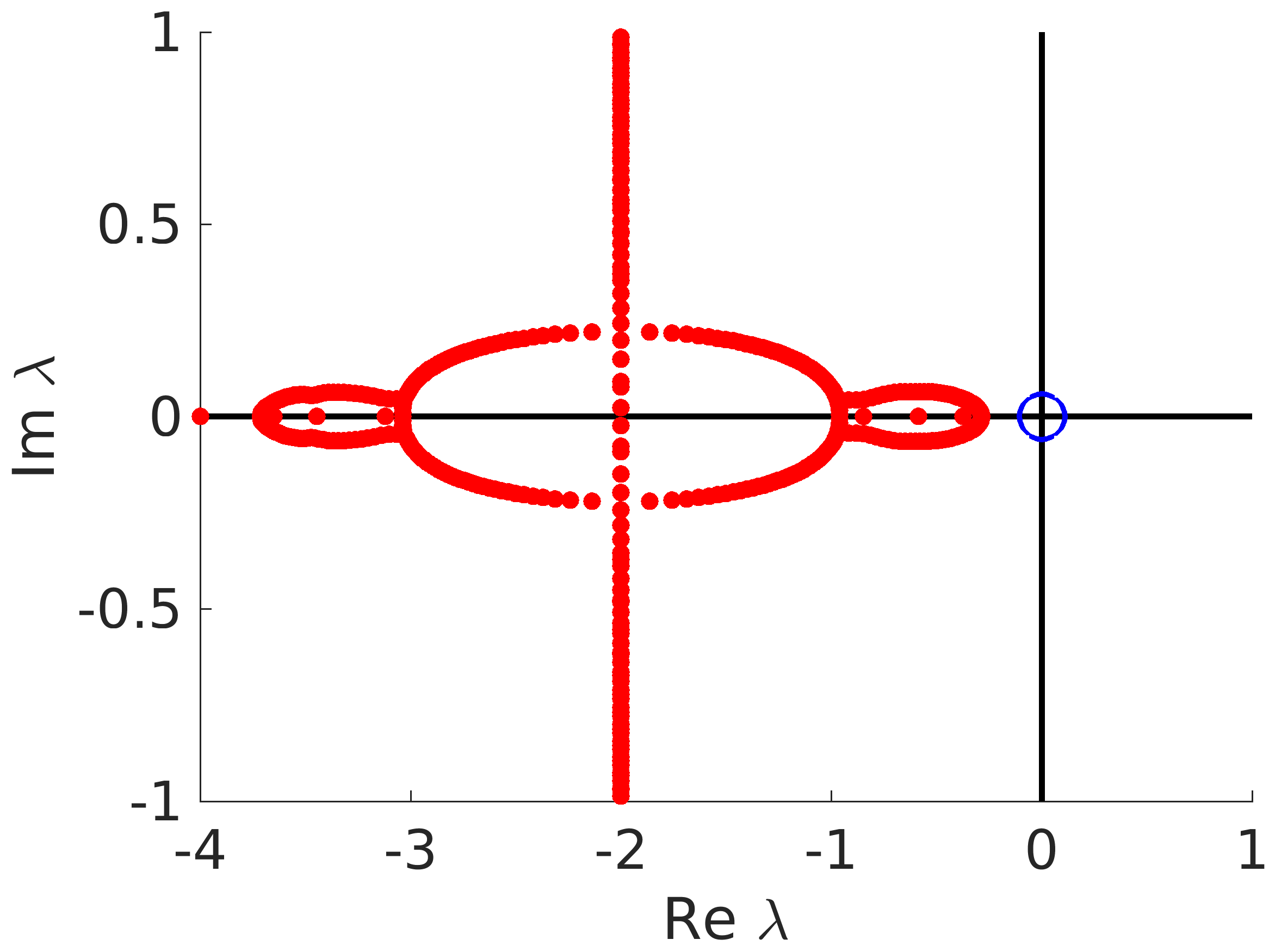}\label{fig:DampedNagumoEigenfunction}}
  \caption{Essential spectrum of the Nagumo wave equation for parameters $\varepsilon=b=\frac{1}{4}$ (a) and the numerical spectrum on the spatial domain $[-R,R]$ 
  for $R=50$ (b) and $R=400$ (c).}
  \label{fig:DampedNagumo_1D_Traveling1Front_EssentialSpectrum}
\end{figure}

\begin{example}[Spectrum of FitzHugh-Nagumo wave system]\label{exa7}
  As shown in Example~\ref{exa2}, the FitzHugh-Nagumo wave system
\eqref{equ:2.43}
  with coefficient matrices
  \begin{equation*}
    M=\varepsilon I_2,\quad A=\mathrm{diag}(1,\tfrac{\rho+c_{\star}^2\varepsilon}{1+c_{\star}^2\varepsilon}), \quad B=I_2,\quad 
    C=0
  \end{equation*}
  and parameters from \eqref{equ:2.40a}
  has a traveling pulse solution
  $u_{\star}(x,t)=v_{\star}(x-\mu_{\star}t)$ with
  \begin{equation*}
    \mu_{\star}=\frac{c_{\star}}{k},\quad k=\sqrt{1+c_{\star}^2\varepsilon},\quad c_{\star}\approx-0.7892.
  \end{equation*} 
  The profile $v_{\star}$ connects the asymptotic state $v_{\pm}=w_{\pm}$ from \eqref{equ:2.41} with itself, 
  i.e. $v_{\star}(\xi)\to v_{\pm}$ as $\xi\to\pm\infty$. The profile 
$v_{\star}$ and the velocity $\mu_{\star}$ are obtained from the simulation  
performed in Example \ref{exa2}. The FitzHugh-Nagumo nonlinearity $g$ from
\eqref{equ:2.40} 
  satisfies
  \begin{equation*}
    g(v_{\pm})=\begin{pmatrix}0\\0\end{pmatrix}\quad\text{and}\quad Dg(v_{\pm})=\begin{pmatrix}1-\left(v_{\pm,1}\right)^2&-1\\\phi&-b\phi\end{pmatrix}.
  \end{equation*}
  The dispersion relation for the FitzHugh-Nagumo pulse states that every $\lambda\in\C$ satisfying
  \begin{equation}
    \begin{aligned}
    \label{equ:4.15}
    \det\begin{pmatrix}\varepsilon\lambda^2+p(\omega)\lambda + q_1(\omega)&1\\
    -\phi&\varepsilon\lambda^2+p(\omega)\lambda + q_2(\omega)\end{pmatrix}=0.
    \end{aligned}
  \end{equation}
  for some $\omega\in\R$ belongs to $\sigma_{\mathrm{ess}}(\PL)$, where we used the abbreviations 
  \begin{equation*}
  \begin{aligned}
    p(\omega) = 1-2i\omega\mu_{\star}\varepsilon,\;
    q_1(\omega) = \omega^2(1-\mu_{\star}^2\varepsilon)-i\omega\mu_{\star}-(1-(v_{\pm,1})^2),\;
    q_2(\omega) = \omega^2\left(\frac{\rho+c_{\star}^2\varepsilon}{1+c_{\star}^2\varepsilon}-\mu_{\star}^2\varepsilon\right)
    -i\omega\mu_{\star}+b\phi.
  \end{aligned}
  \end{equation*}
  Note that \eqref{equ:4.15} leads to the quartic problem
  \begin{equation*}
  \begin{aligned}
    0 
      & = a_4\lambda^4 + a_3\lambda^3 + a_2\lambda^2 + a_1\lambda + a_0
  \end{aligned}
  \end{equation*}
  with $\omega$-dependent coefficients
  \begin{equation*}
    a_4=\varepsilon^2,\quad a_3=2\varepsilon p,\quad a_2=\varepsilon(q_1+q_2)+p^2,\quad a_1=p(q_1+q_2),\quad a_0=q_1q_2+\phi.
  \end{equation*}

  \begin{figure}[ht]
  \centering
  \subfigure[]{\includegraphics[height=3.3cm] {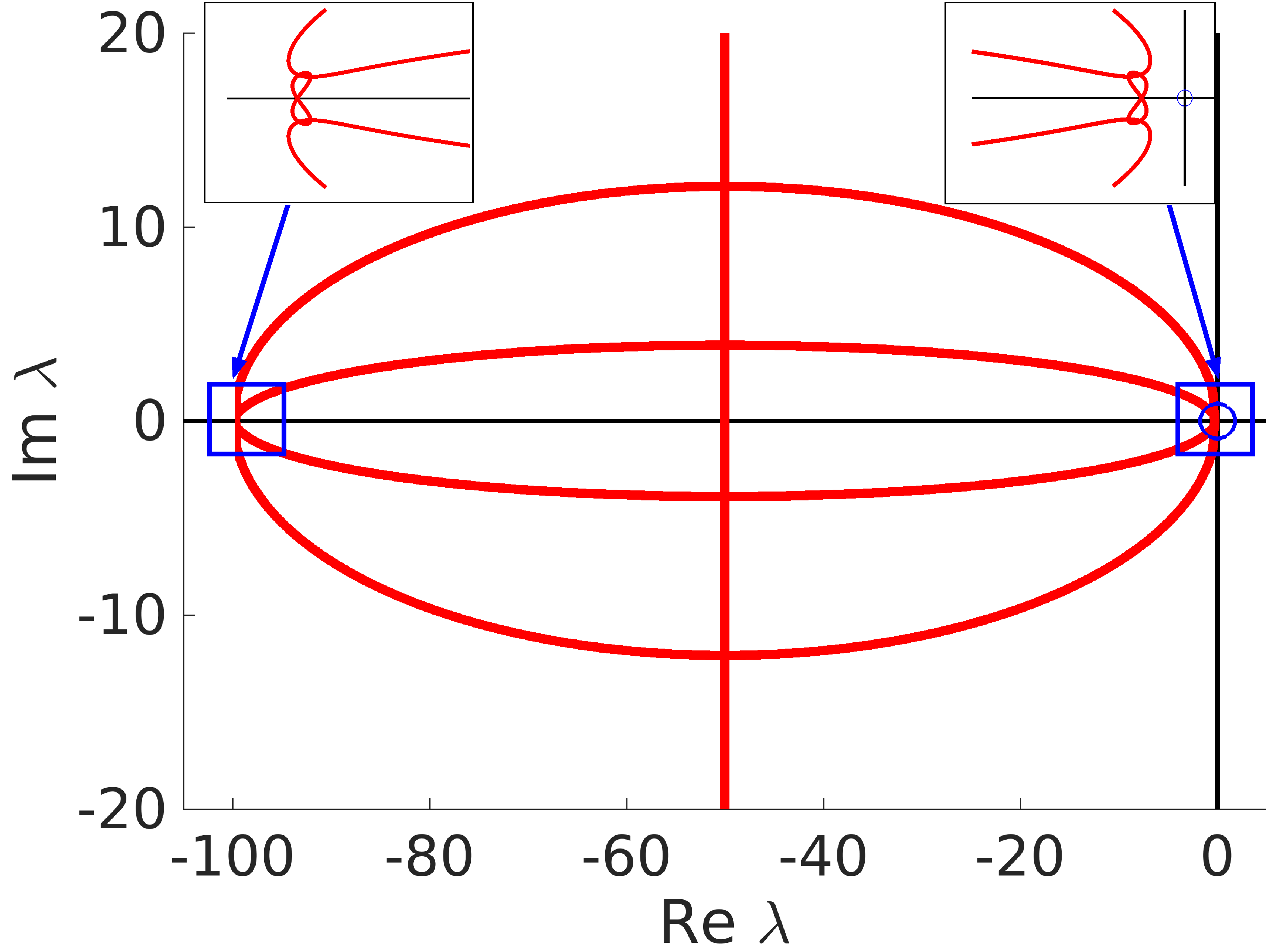}\label{fig:DampedFitzHughNagumoEssentialSpectrum}}
  \subfigure[]{\includegraphics[height=3.3cm] {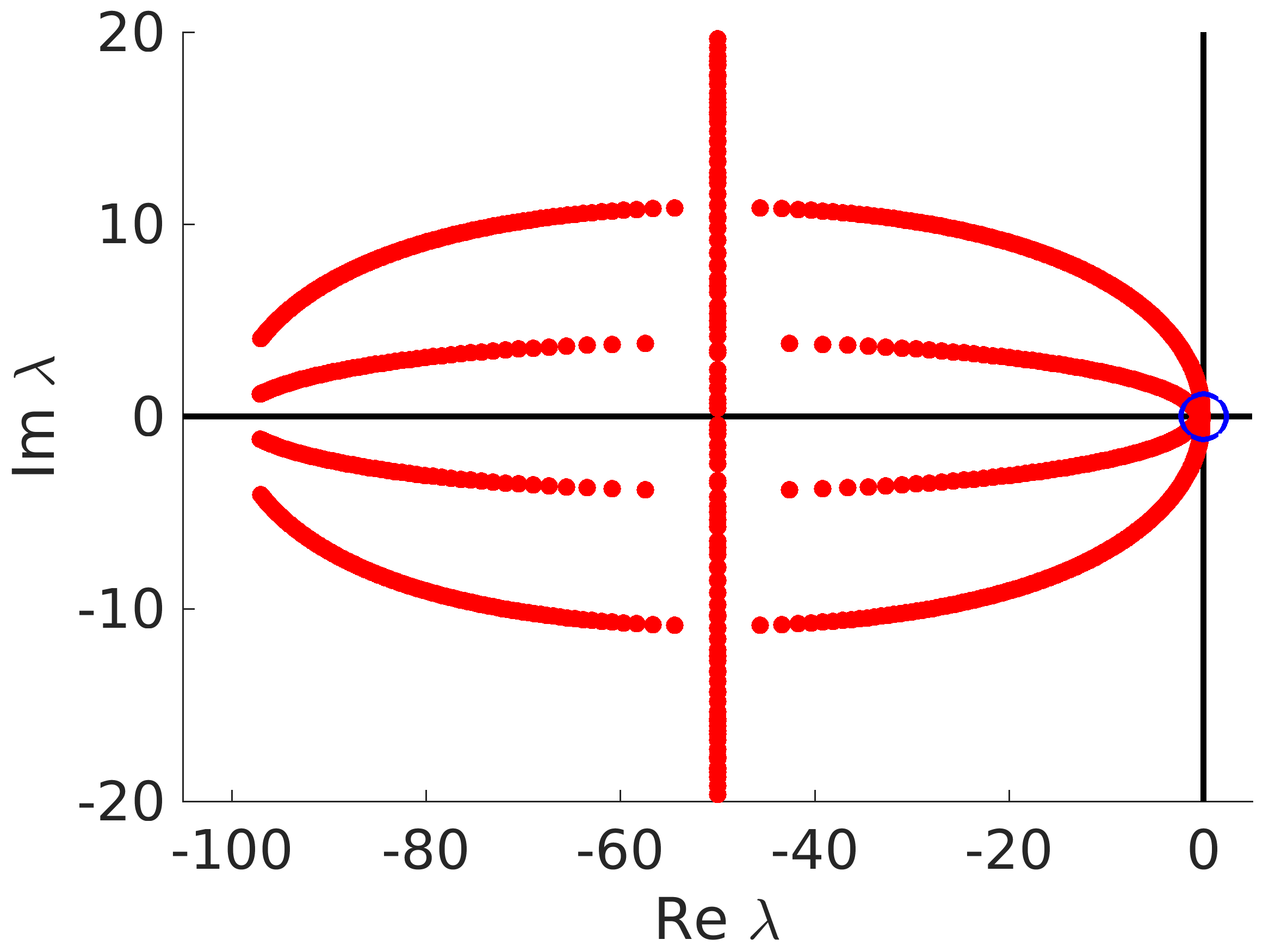} \label{fig:DampedFitzHughNagumoNumericalSpectrum}}
  \subfigure[]{\includegraphics[height=3.3cm] {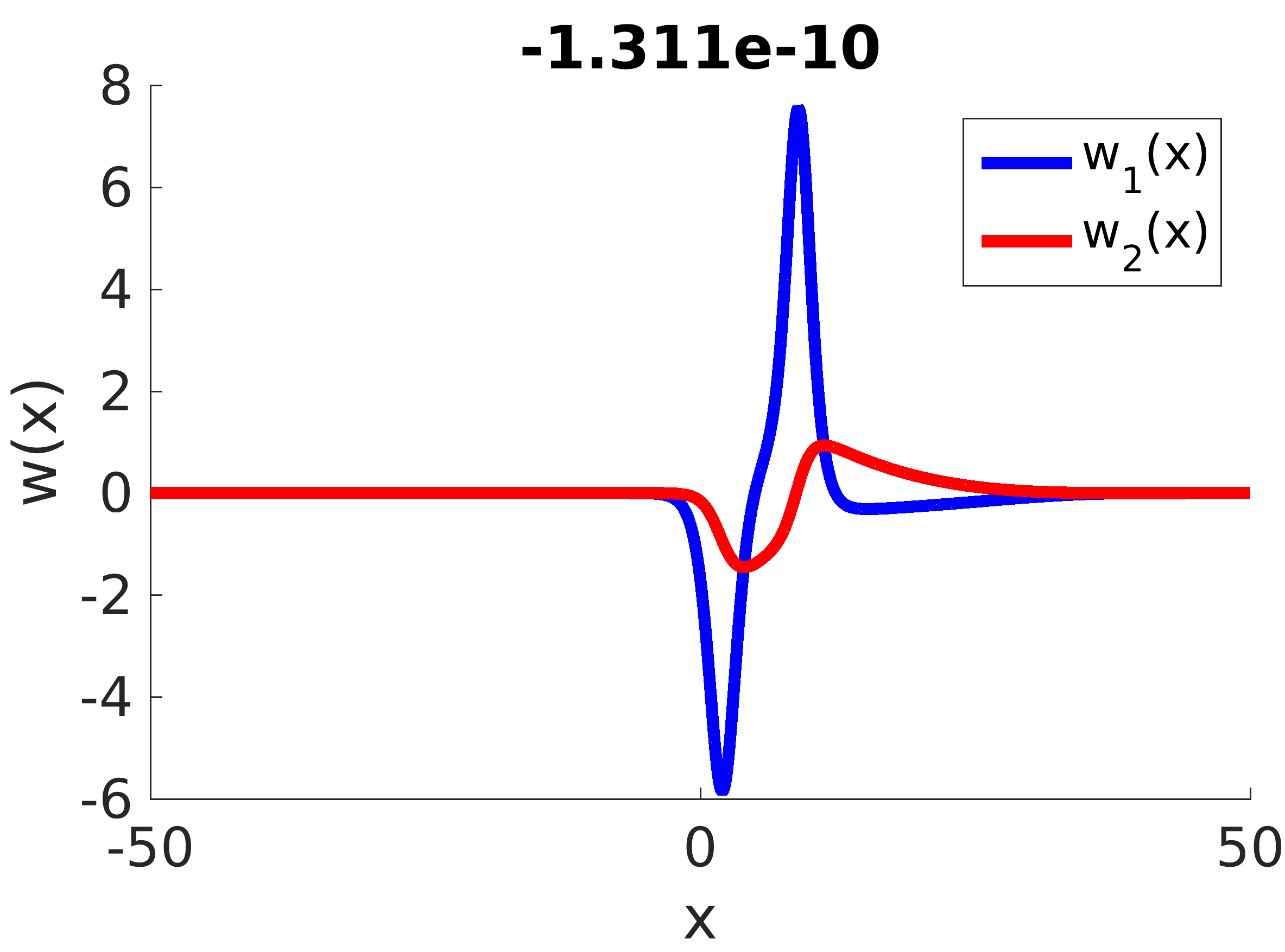}\label{fig:DampedFitzHughNagumoEigenfunction}}
  \caption{Essential spectrum of the FitzHugh-Nagumo wave system for parameters from \eqref{equ:2.40a} and $\varepsilon=10^{-2}$ (a), the numerical spectrum (b) and both components of 
  the eigenfunction belonging to $\lambda\approx 0$ (c).}
  \label{fig:DampedFitzHughNagumo_1D_Traveling1Pulse_EssentialSpectrum}
  \end{figure}

  \noindent
  Instead of this we solved numerically the quadratic eigenvalue
  problem \eqref{equ:4.15} using parameter continuation with respect to $\omega$. 
  In this way we obtain analytical
  information about the spectrum of the FitzHugh-Nagumo pulse shown in
  Figure~\ref{fig:DampedFitzHughNagumo_1D_Traveling1Pulse_EssentialSpectrum}(a)
  (red lines) for $\varepsilon=10^{-2}$.  Again part of the point
  spectrum (blue circle) is determined by Proposition~\ref{thm:4.1}
  and part of the essential spectrum (red lines) by
  Proposition~\ref{thm.4.2}. Zooming into the essential spectrum shows
  that the parabola-shaped structure contains at both ends a loop which is
  already known from the first order limit case, see \cite{BeynLorenz1999}. 
  From these results
  it is obvious that there is again a spectral gap to the imaginary
  axis, but we have no analytic expression for this gap.  The
  numerical spectrum for periodic boundary conditions is shown in
  Figure~\ref{fig:DampedFitzHughNagumo_1D_Traveling1Pulse_EssentialSpectrum}(b).
  It
  consists of the approximations of the point spectrum (blue circle)
  and of the essential spectrum (red dots).  
  Figure~\ref{fig:DampedFitzHughNagumo_1D_Traveling1Pulse_EssentialSpectrum}(c)
  shows the approximation of both components $w_1$ and $w_2$ of the
  eigenfunction $w(\xi)\approx v_{\star,\xi}(\xi)$ belonging to the
  small eigenvalue $\lambda=1.311\cdot10^{-10}$ which approximates the eigenvalue $0$. 
  Note that an approximation of $v_{\star}=(v_{\star,1},v_{\star,2})^T$ was
  provided in Figure~\ref{fig:FrozenDampedFitzHughNagumoTraveling1Pulse}(a).
\end{example}

%
%
\sect{First order systems and stability of traveling waves}
\label{sec:3}
In this section we transform the original second order damped wave
equation \eqref{equ:2.1} into a first order system of triple size. 
To the first order system we then apply stability results from
\cite{RottmannMatthes2012a} and derive asymptotic stability of
traveling waves for the original second order problem and the second
order freezing method. Transferring regularity and stability between these
two systems requires some care, and we will provide details of the proofs in Appendix~\ref{sec:A}. 

\subsection{Transformation to first order system and stability with asymptotic phase}
\label{subsec:3.1}
In the following we impose the smoothness condition
\begin{assumption}\label{ass:3.0}
  The function $f:\R^{3m}\rightarrow\R^m$ satisfies $f\in C^3(\R^{3m},\R^m)$
\end{assumption}
and the following well-posedness condition
\begin{assumption}\label{ass:3.1}
  The matrix $M\in\R^{m,m}$ is invertible and $M^{-1}A$ is positive
  diagonalizable.
\end{assumption}
Assumption~\ref{ass:3.1} implies that there is a (not necessarily
unique) positive diagonalizable matrix $N\in\R^{m,m}$ satisfying
$N^2=M^{-1}A$. Let $\lambda_1\geqslant\cdots\geqslant\lambda_m>0$
denote the real positive eigenvalues of $N$.

We transform to a first order system by introducing  $U=(U_1,U_2,U_3)^{\top}\in\R^{3m}$  via
\begin{equation}
 \label{equ:2.21}
  U_1=u,\quad U_2=u_t + Nu_x,\quad U_3= u_t - N u_x+c u,
\end{equation}
where $c \in \R$ is an arbitrary constant to be determined later.
These variables transform \eqref{equ:2.1} into the first order system
\begin{equation}
  \label{equ:2.1sys}
  U_t = E U_x + F(U),
\end{equation}
with $E\in\R^{3m,3m}$ and $F:\R^{3m}\rightarrow\R^{3m}$ given by
\begin{equation}
  \label{equ:2.1N}
\begin{aligned}
  E =& \begin{pmatrix}N&0 &0\\ 0 &N & 0 \\ 0 & 0 & -N
\end{pmatrix},\quad
  F(U)=\begin{pmatrix} -cU_1 + U_3 \\ \tilde{f}(U)\\ \tilde{f}(U)+ cU_2
\end{pmatrix},\\
\tilde{f}(U):= &
M^{-1}f(U_1,\frac{1}{2}N^{-1}(U_2-U_3+cU_1),\frac{1}{2}(U_2+U_3-cU_1)).
\end{aligned}
\end{equation}
Thus we write the second-order Cauchy problem \eqref{equ:2.7} 
 as a first-order Cauchy problem for \eqref{equ:2.1sys},
\begin{equation}
  \label{equ:2.7sysA}
      U_t = E U_x + F(U),\quad   U(\cdot,0)=U_0:=(u_0, v_0+Nu_{0,x}, v_0 - N u_{0,x}+c u_0)^{\top}.
\end{equation}
\begin{remark} The transformation to a first order system has some 
 arbitrariness and does not influence the results for the second order problem
\eqref{equ:2.1}. The current transformation to a system of dimension $3m$ improves
an earlier version \cite{BeynOttenRottmannMatthes2016b} of our work
which was limited to the semilinear case \eqref{equ:2.1a}. There we
used $U_1=u,U_2=u_t - N u_x$ to obtain a system of minimal dimension $2m$.
But for this transformation the general nonlinear equation \eqref{equ:2.1} 
does not lead to a semilinear system of type \eqref{equ:2.1sys}. The 
drawback of the non-minimal dimension $3m$ is  that extra eigenvalues of the 
linearized system appear which do not correspond to those of the linearized second order system. The constant $c$ above 
will be used in Section~\ref{subsec:A2} to control these extra eigenvalues. 
\end{remark}
We emphasize that system~\eqref{equ:2.1sys} is diagonalizable
hyperbolic. More precisely, there is a nonsingular block-diagonal matrix $T\in
\R^{3m,3m}$, 
so that the change of variables $W=T^{-1}U$ transforms 
\eqref{equ:2.1sys} into diagonal hyperbolic form
\begin{equation}
  \label{equ:2.4N}
  W_t=\Lambda_E W_x+G(W),\quad \Lambda_{E}=T^{-1}ET=
\diag(\Lambda,\Lambda,-\Lambda),\quad G(W)=T^{-1}F(TW),
\end{equation}
where $\Lambda=\diag(\lambda_1,\ldots,\lambda_m)$.
For systems of type \eqref{equ:2.7sysA}, \eqref{equ:2.4N} we have local 
well-posedness of the Cauchy 
problem in suitable function spaces such as (see e.g.
\cite[Sect.~6]{Rauch12})
\begin{equation} \label{equ:2.4F}
  \mathcal{CH}^k(J;\R^n)=\bigcap_{j=0}^k
  C^{k-j}\bigl(J,H^j(\R,\R^n)\bigr),\quad J\subseteq\R
  \text{ interval},\;k\in\N_0,\;n\in\N.
\end{equation}
Our regularity condition on the traveling wave is as follows:
\begin{assumption} \label{ass:4.basic}
  The pair $(v_\star,\mu_\star)\in C^2_b(\R,\R^m)\times \R$  satisfies
  $v_{\star,\xi}\in H^3(\R,\R^m)$ and is a non-constant
  solution of the second order traveling wave equation
  \eqref{equ:2.4} with
  \begin{equation*}
    \lim_{\xi \rightarrow \pm \infty}v_{\star}(\xi)=v_{\pm},\quad 
 \lim_{\xi \rightarrow \pm \infty}v_{\star,\xi}(\xi)=0,\quad
    f(v_{\pm},0,0)=0.
  \end{equation*}
\end{assumption}
The first order system \eqref{equ:2.1sys} then has a traveling wave
\begin{equation} \label{equ:defvstar}
U_{\star}(x,t) = V_{\star}(x- \mu_{\star}t), \quad
V_\star:= \begin{pmatrix}v_\star \\(N-\mu_{\star} I_{m})v_{\star,\xi} \\
cv_{\star}-(N+\mu_{\star} I_{m})v_{\star,\xi}
\end{pmatrix} \in C^2_b(\R,\R^m)\times C^1_b(\R,\R^{2m}).
\end{equation}
The profile $V_{\star}$ solves the equation
\begin{equation}\label{equ:TWequ}
  0=(E+\mu_{\star}I_{3m}) V_{\star,\xi} + F(V_\star)
\end{equation}
and satisfies
\begin{equation} \label{equ:1systprop}
  \lim_{\xi\to\pm\infty}V_{\star}(\xi)=V_{\pm}:=(v_{\pm},0,cv_{\pm})\quad\text{and}\quad
  F(V_{\pm})=0.
\end{equation}
Our next assumption is
\begin{assumption}
\label{ass:4.reg} The matrix $A- \mu_{\star}^2 M$ is nonsingular.
\end{assumption}
It guarantees that \eqref{equ:2.4} is a regular second order system
and that $v_{\star}\in C_b^5(\R,\R^m)$ which follows from Assumptions~\ref{ass:3.0}
and~\ref{ass:4.basic}. Further, from $A- \mu_{\star}^2M=M(N-\mu_{\star}I_m)
(N+\mu_{\star}I_m)$ one infers that the matrix $E+\mu_{\star}I_{3m}$ in
\eqref{equ:TWequ} is nonsingular. This will enable us to apply the
stability results from \cite{RottmannMatthes2012a} which hold
for hyperbolic systems where the matrix $E+\mu_{\star}I_{3m}$
is real diagonalizable with nonzero but not necessarily distinct eigenvalues.
The condition also ensures that any solution $V_{\star}\in  C^1_b(\R,\R^{3m})$ of \eqref{equ:TWequ} has a first component in $C^2_b(\R,\R^m)$ which solves
the second order traveling wave equation~\eqref{equ:2.4}.
Moreover, using the limits from Assumption~\ref{ass:4.basic} one obtains
from \eqref{equ:2.4}
\begin{equation} \label{equ:limv2}
\lim_{\xi \rightarrow \pm \infty}v_{\star,\xi \xi}(\xi) = 0.
\end{equation}

Next, recall the dispersion set \eqref{equ:4.12} for the original second order
problem
\begin{equation}\label{equ:disp2nd}
  \sigma_{\mathrm{disp}}(\mathcal{P}) =
  \Bigl\{\lambda\in\C:
  \det\bigl(\lambda^2 A_2+\lambda
  A_1^{\pm}(\omega)+A_0^\pm(\omega)\bigr)=0\;
  \text{for some }\omega\in\R,\; \text{and some sign} \; \pm \Bigr\},
\end{equation}
with $A_0^\pm,A_1^{\pm},A_2$ given in \eqref{equ:4.11}. We require
\begin{assumption}\label{ass:dispstab}
  There is $\delta>0$ such that
  $\Re(\sigma_{\mathrm{disp}}(\mathcal{P})) < - \delta$.
 \end{assumption}
Finally, we  exclude nonzero eigenvalues in the right half plane.
\begin{assumption}\label{ass:evstab}
  The eigenvalue $0$ of $\mathcal{P}$ is simple and there
  is no other eigenvalue of $\mathcal{P}$ with real part greater than
  $-\delta$ with $\delta$ given by Assumption~\ref{ass:dispstab}.
\end{assumption}
With these assumptions our first main result reads:
\begin{theorem}[Stability with asymptotic phase]\label{thm:4.17}
  Let Assumptions~\ref{ass:3.0}~--~\ref{ass:evstab} hold.
  Then,
  for all $0<\eta<\delta$ there is $\rho>0$ such that for all $u_0\in
  v_\star+H^3(\R,\R^m)$, $v_0\in H^2(\R,\R^m)$ with
  \begin{equation}\label{equ:3.9}
    \|u_0-v_\star\|_{H^3}+\|v_0+\mu_\star
    v_{\star,\xi}\|_{H^2}\leq \rho, 
  \end{equation}
  the Cauchy problem~(\ref{equ:2.7}) has a unique
  global solution
  $u\in v_\star+\mathcal{CH}^2([0,\infty);\R^m)$.
  Moreover, there exist $\varphi_\infty=\varphi_\infty(u_0,v_0)$ and
  $C=C(\eta,\rho)$ satisfying
  \begin{equation}\label{equ:3.10a}
    |\varphi_\infty|\leq C\Bigl(\|u_0-v_\star\|_{H^3}+\|v_0+\mu_\star
    v_{\star,\xi}\|_{H^2}\Bigr) 
  \end{equation}
  and
  \begin{equation}
  \label{equ:3.10b}
  \begin{aligned}
    \|u(\cdot,t)-v_\star(\cdot-\mu_\star t-\varphi_\infty)\|_{H^2} + &
    \|u_t(\cdot,t)+\mu_\star v_{\star,\xi}(\cdot-\mu_\star
    t-\varphi_\infty)\|_{H^1}\\
     \leq & C \Bigl(\|u_0-v_\star\|_{H^3}+\|v_0+\mu_\star
    v_{\star,\xi}\|_{H^2}\Bigr) e^{-\eta t}\quad\forall t\geq 0.
  \end{aligned}
  \end{equation}
\end{theorem}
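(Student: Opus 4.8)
The plan is to deduce the theorem from the nonlinear stability theory for first order hyperbolic systems of \cite{RottmannMatthes2012a}, applied to the system \eqref{equ:2.1sys}--\eqref{equ:2.1N}. First I would record the structural facts that make this reduction legitimate. Under Assumptions~\ref{ass:3.0} and~\ref{ass:3.1} the linear change of variables \eqref{equ:2.21} is an isomorphism carrying the second order Cauchy problem \eqref{equ:2.7} onto the first order Cauchy problem \eqref{equ:2.7sysA}; the system \eqref{equ:2.1sys} is diagonalizably hyperbolic via $T$ (cf.\ \eqref{equ:2.4N}); and $F$ inherits $C^3$ regularity from $f$. By \eqref{equ:defvstar}--\eqref{equ:1systprop} and Assumption~\ref{ass:4.basic} the traveling wave $u_\star$ corresponds to a traveling wave $U_\star(x,t)=V_\star(x-\mu_\star t)$ of \eqref{equ:2.1sys} whose profile solves \eqref{equ:TWequ} and lies in the Sobolev classes required by \cite{RottmannMatthes2012a}, where one uses the elevated regularity $v_\star\in C_b^5(\R,\R^m)$ granted by Assumption~\ref{ass:4.reg} together with \eqref{equ:limv2}. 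Finally, Assumption~\ref{ass:4.reg} makes $E+\mu_\star I_{3m}$ nonsingular, which is exactly the hyperbolicity condition in the co-moving frame that the theory in \cite{RottmannMatthes2012a} demands.

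The second and crucial step is to verify the spectral hypotheses of the first order stability theorem for the operator obtained by linearizing \eqref{equ:2.1sys} about $V_\star$ in the co-moving frame, $W_t=(E+\mu_\star I_{3m})W_\xi+DF(V_\star(\xi))W$. Its essential spectrum is governed by the constant coefficient limits at $\xi\to\pm\infty$, i.e.\ by the set $\{\lambda\in\C:\det(\lambda I_{3m}-i\omega(E+\mu_\star I_{3m})-DF(V_\pm))=0\ \text{for some}\ \omega\in\R\}$, and its point spectrum by the associated linear eigenvalue problem. The claim to be established is that, under the eigenfunction transformation induced by \eqref{equ:2.21}, this spectrum splits as $\sigma(\mathcal{P})$ together with a set of spurious eigenvalues and spurious dispersion curves created by enlarging the state space from $m$ to $3m$ components. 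This is where the free constant $c$ in \eqref{equ:2.21} enters: one shows $c$ can be chosen so that all spurious spectrum lies strictly to the left of the line $\Re\lambda=-\delta$, while the remaining spectrum coincides with $\sigma(\mathcal{P})$ and hence obeys Assumptions~\ref{ass:dispstab}--\ref{ass:evstab}; in particular $0$ remains an algebraically simple eigenvalue with eigenfunction $V_{\star,\xi}$, matching the eigenfunction $v_{\star,\xi}$ for $\mathcal{P}$ from Proposition~\ref{thm:4.1}. I expect this spectral bookkeeping, together with the precise correspondence of Sobolev scales between the two formulations, to be the main obstacle, and it is the part that the authors announce will be carried out in Appendix~\ref{sec:A}.

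Granting the spectral hypotheses, the main theorem of \cite{RottmannMatthes2012a} applies to \eqref{equ:2.7sysA}: for every $0<\eta<\delta$ there is $\rho_1>0$ such that, whenever $U_0-V_\star$ has norm at most $\rho_1$ in the relevant Sobolev space, \eqref{equ:2.7sysA} has a unique global solution $U\in V_\star+\mathcal{CH}^2([0,\infty);\R^{3m})$, an asymptotic phase $\varphi_\infty$ with $|\varphi_\infty|\leq C\|U_0-V_\star\|$, and $\|U(\cdot,t)-V_\star(\cdot-\mu_\star t-\varphi_\infty)\|\leq C\|U_0-V_\star\|e^{-\eta t}$ for all $t\geq 0$.

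It then remains to transfer hypothesis and conclusion through \eqref{equ:2.21}. For the initial data one computes $U_0-V_\star=\bigl(u_0-v_\star,\ (v_0+\mu_\star v_{\star,\xi})+N(u_0-v_\star)_x,\ (v_0+\mu_\star v_{\star,\xi})-N(u_0-v_\star)_x+c(u_0-v_\star)\bigr)$, so that \eqref{equ:3.9} with $\rho$ sufficiently small forces $\|U_0-V_\star\|\leq\rho_1$; the $x$-derivative occurring here is exactly why $H^3$ on $u_0-v_\star$ is the natural assumption alongside $H^2$ on $v_0+\mu_\star v_{\star,\xi}$. For the solution one reads off $u=U_1$ and $u_t=\tfrac12(U_2+U_3-cU_1)$, recovers $u\in v_\star+\mathcal{CH}^2([0,\infty);\R^m)$, and, using $(V_\star)_1=v_\star$ and $\tfrac12\bigl((V_\star)_2+(V_\star)_3-c(V_\star)_1\bigr)=-\mu_\star v_{\star,\xi}$, obtains \eqref{equ:3.10a} from $|\varphi_\infty|\leq C\|U_0-V_\star\|$ and \eqref{equ:3.10b} by estimating each component of $U(\cdot,t)-V_\star(\cdot-\mu_\star t-\varphi_\infty)$ separately; the shift in Sobolev index between \eqref{equ:3.9} and \eqref{equ:3.10b} is absorbed into the one-derivative loss inherent in passing from $U$ back to the pair $(u,u_t)$, whose detailed justification is again deferred to Appendix~\ref{sec:A}.
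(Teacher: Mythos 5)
Your overall architecture coincides with the paper's: pass to the first order system \eqref{equ:2.1sys}, verify the hypotheses of the stability theorem of \cite{RottmannMatthes2012a} (with the shift $c$ chosen, e.g.\ $c<-\delta$, so that the spurious dispersion line $c+i\R$ and the spurious point spectrum stay left of $\Re\lambda=-\delta$, and with the factorization-based identification of the remaining spectrum with $\sigma(\PL)$), apply that theorem, and transfer the estimates back through \eqref{equ:2.21}. That part is sound and matches Appendix~\ref{sec:A}.

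There is, however, a genuine gap in your final step. You assert that \eqref{equ:2.21} is an isomorphism between the two Cauchy problems and that one can simply ``read off'' $u=U_1$ and $u_t=\tfrac12(U_2+U_3-cU_1)$ from the first order solution. The forward direction is fine, but the converse is not automatic: the first order system has $3m$ unknowns while the second order equation has $m$, so the relations $U_2=U_{1,t}+NU_{1,x}$ and $U_3=U_{1,t}-NU_{1,x}+cU_1$ are constraints that hold at $t=0$ by the choice of $U_0$ but must be shown to propagate under the flow of \eqref{equ:2.1sys}. The paper proves this by introducing $W_2=V_2-V_{1,t}-NV_{1,x}$ and $W_3=V_3-V_{1,t}+NV_{1,x}-cV_1$, showing they satisfy a homogeneous transport system with zero initial data and hence vanish identically; only then does $U_1$ solve \eqref{equ:2.7}. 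Without this argument your claim that the recovered $u$ solves the second order equation is unproven. A related inaccuracy: the theorem of \cite{RottmannMatthes2012a} yields $U\in V_\star+\mathcal{CH}^1([0,\infty);\R^{3m})$ for $H^2$-close data, not $\mathcal{CH}^2$ as you state; the extra regularity $U_1\in v_\star+\mathcal{CH}^2$ needed both for the constraint-propagation computation and for the conclusion of the theorem has to be bootstrapped separately from the first equation $V_{1,t}=NV_{1,x}-cV_1+V_3$ using linear hyperbolic regularity (Lemma~\ref{LemmaA1}) and the commutation of weak spatial with strong temporal derivatives (Lemma~\ref{lemA:commutederiv}). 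These two points are the substance of Appendix~\ref{subsec:A3} and cannot be absorbed into a remark that the details are ``deferred to the appendix.''
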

The proof will be given in Appendix~\ref{sec:A}.
Let us note that the loss of one derivative for the solution when
compared to initial data, is typical for hyperbolic stability theorems
and results from the theory in \cite{RottmannMatthes2012a}.

\subsection{Stability of the freezing method}
Let us first apply the freezing method to the first order system
\eqref{equ:2.7sysA}. We introduce new unknowns $\gamma(t)\in\R$
and $V(\xi,t)\in\R^{3m}$ via the ansatz
\begin{equation}
  \begin{aligned}
  \label{equ:2.8Sys}
    U(x,t) & = V(\xi,t),\quad\xi:=x-\gamma(t),\quad x\in\R,\,t\geq 0.  \end{aligned}
\end{equation} 
This formally leads to
\begin{subequations}\label{equ:3.1}
  \begin{align}
    V_t &= (E+\mu I_{3m})V_\xi + F(V), \label{equ:3.1a} \\
    \gamma_t &=\mu, \label{equ:3.1b} \\
    V(\cdot,0) &= V_0
    := U_0=(u_0,v_0+Nu_{0,\xi},v_0-N u_{0,\xi}+cu_0)^\top,
    \quad 
    \gamma(0)=0,\label{equ:3.1c}
  \end{align}
\end{subequations}
with $E$ and $F$ from \eqref{equ:2.1N}.  In \eqref{equ:3.1} we
introduced the time-dependent function $\mu(t)\in\R$ for convenience.
As before, equation~\eqref{equ:3.1b} decouples and can be solved in a
postprocessing step.  One needs an additional algebraic constraint to
compensate the extra variable $\mu$. To relate the second order freezing
equation~\eqref{equ:2.12} and the first order version \eqref{equ:3.1},
we omit the introduction of $\mu_2$ in \eqref{equ:2.12} and write it
in the form
\begin{subequations}\label{equ:3.2}
  \begin{align}
   Mv_{tt} = & (A-\mu^2 M)v_{\xi\xi} + 2\mu Mv_{\xi t} +
    \mu_t M v_{\xi} + f(v,v_{\xi},v_t-\mu v_{\xi}), &&\xi\in\R,\,t\geqslant 0,
   \label{equ:3.2a}\\
    \gamma_t &= \mu,\label{equ:3.2b}\\
    v(\cdot,0)&=u_0,\quad v_t(\cdot,0)=v_0+\mu(0)u_{0,\xi},\quad
    \gamma(0)=0. \label{equ:3.2c}
  \end{align}
\end{subequations}
Transforming \eqref{equ:3.2} into a first order system by introducing
$V=(V_1,V_2,V_3)^{\top}\in\R^{3m}$ via
\begin{equation} 
 \label{equ:3trans}
  V_1=v,\quad V_2=v_t+(N- \mu I_m)v_\xi,\quad  V_3=v_t-(N+\mu I_m)v_{\xi}+cv
\end{equation}
we again find the system \eqref{equ:3.1}.
As a consequence we obtain the equivalence of the 
freezing systems for the first and the second order formulation.
 Henceforth
we restrict to the fixed phase condition~(\ref{equ:2.15}) for which we
require the following condition.
\begin{assumption}\label{ass:3.template}
  The template function $\hat{v}:\R\to\R^m$ belongs to
  $v_\star+H^1(\R,\R^m)$ and satisfies
  \begin{subequations}\label{equ:3.16}
    \begin{align}
      \langle \hat{v}-v_\star,\hat{v}_\xi\rangle_{L^2}&=0,
      \label{equ:3.16a}\\
      \langle v_{\star,\xi}, \hat{v}_\xi\rangle_{L^2}&\neq 0.
      \label{equ:3.16b}
    \end{align}
  \end{subequations}
\end{assumption}
Condition \eqref{equ:3.16a} implies that \eqref{equ:2.14a} holds for the fixed phase
condition \eqref{equ:2.15}, so that
$(v_\star,\mu_\star,0)$ is a stationary solution of
\eqref{equ:2.19a}, \eqref{equ:2.19b} (skipping the $\gamma$-equation 
needed for reconstruction only). Condition \eqref{equ:3.16b} specifies some non-degeneracy 
used in the proof.

Now we are ready to state asymptotic stability (in the sense of
Lyapunov) of the steady state $(v_\star,\mu_\star,0)$ for the freezing
system~(\ref{equ:2.19}) that belongs to the nonlinear wave equation.
\begin{theorem}[Stability of the freezing method]\label{thm:freezestab}
 Let Assumptions~\ref{ass:3.0}~--~\ref{ass:evstab} hold
   and consider the phase condition $\psi^\mathrm{2nd}(v,v_t,\mu_1,\mu_2)=
  \langle v-\hat{v},  \hat{v}_\xi\rangle_{L^2}$ with a template function $\hat{v}$ which fulfills the non-degeneracy Assumption~\ref{ass:3.template}. 
  Then,
  for all $0<\eta<\delta$ there is $\rho>0$ such that for all $u_0\in
  v_\star+H^3(\R,\R^m)$, $v_0\in H^2(\R,\R^m)$  and $\mu_1^0 \in \R$ which
satisfy
  \begin{equation}\label{equ:3.17}
    \|u_0-v_\star\|_{H^3}+\|v_0+\mu_\star v_{\star,\xi}\|_{H^2}\leq \rho
  \end{equation}
and the consistency conditions \eqref{equ:2.13consist}, \eqref{equ:2.14consist}, $\langle u_0 - \hat{v}, \hat{v}_{\xi}\rangle_{L^2}=0$
the following holds.  
  The freezing system \eqref{equ:2.19}  
  has a unique global solution
  $(v,\mu_1,\mu_2,\gamma)\in (v_\star+\mathcal{CH}^2([0,\infty);\R^m)) \times
  C^1([0,\infty))\times C([0,\infty))\times C^2([0,\infty))$. 
  Moreover, there exists some $C=C(\rho,\eta)>0$ such that the following exponential stability estimate holds
  \begin{equation}\label{equ:3.18}
    \|v(\cdot,t)-v_\star\|_{H^2}+\|v_t(\cdot,t)\|_{H^1}+|\mu_1(t)-\mu_\star|
    \leq C\bigl( \|u_0-v_\star\|_{H^3}+\|v_0+\mu_\star
    v_{\star,\xi}\|_{H^2}\bigr)\;e^{-\eta t}\quad \forall t\geq 0.
  \end{equation}
\end{theorem}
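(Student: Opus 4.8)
The plan is to deduce the statement from Theorem~\ref{thm:4.17} by reconstructing a solution of the freezing PDAE~\eqref{equ:2.19} out of the solution of the Cauchy problem~\eqref{equ:2.7}; this uses that the reconstruction equation $\gamma_t=\mu_1$ decouples and that the index-$3$ fixed phase condition determines the shift $\gamma$ from the wave. (Equivalently one may pass to the first order freezing system~\eqref{equ:3.1} and quote the hyperbolic freezing stability theory; the reconstruction argument below is the same in substance and stays in the second order picture.)

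First, given $u_0,v_0,\mu_1^0$ as in the hypotheses, the smallness assumption~\eqref{equ:3.17} together with Theorem~\ref{thm:4.17} yields a unique global $u\in v_\star+\mathcal{CH}^2([0,\infty);\R^m)$ solving~\eqref{equ:2.7}, with an asymptotic phase $\varphi_\infty$ obeying~\eqref{equ:3.10a} and~\eqref{equ:3.10b}. Next, set $\Phi(\gamma,t):=\langle u(\cdot+\gamma,t)-\hat v,\hat v_\xi\rangle_{L^2}$. By the consistency condition $\langle u_0-\hat v,\hat v_\xi\rangle_{L^2}=0$ one has $\Phi(0,0)=0$, while $\partial_\gamma\Phi(\gamma,t)=\langle u_\xi(\cdot+\gamma,t),\hat v_\xi\rangle_{L^2}$ is close to $\langle v_{\star,\xi},\hat v_\xi\rangle_{L^2}\neq 0$ (by~\eqref{equ:3.16b}) as long as $u(\cdot+\gamma,t)$ is near the wave. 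The implicit function theorem provides a unique $\gamma$ with $\Phi(\gamma(t),t)=0$ and $\gamma(0)=0$; differentiating $\Phi(\gamma(t),t)=0$ once and twice expresses $\gamma_t$ and $\gamma_{tt}$ through the $\mathcal{CH}^2$-data of $u$, so $\gamma\in C^2$, and since~\eqref{equ:3.10b} keeps $u$ in a small $\mathcal{CH}^2$-tube around the translates of $v_\star$ for all $t$, the non-degeneracy $\partial_\gamma\Phi\neq0$ persists and $\gamma$ is global. Putting $v(\xi,t):=u(\xi+\gamma(t),t)$, $\mu_1:=\gamma_t$, $\mu_2:=\gamma_{tt}$, the freezing-ansatz identities~\eqref{equ:2.8}--\eqref{equ:2.11} show that $(v,\mu_1,\mu_2,\gamma)$ solves~\eqref{equ:2.19a}; the phase condition~\eqref{equ:2.19b} holds by construction; the initial data~\eqref{equ:2.19c} follow from $\gamma(0)=0$, and $\gamma_t(0)=\mu_1^0$, $\gamma_{tt}(0)=\mu_2^0$ are precisely the consistency relations~\eqref{equ:2.13consist},~\eqref{equ:2.14consist}. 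Composing $u-v_\star\in\mathcal{CH}^2$ with the $C^2$ shift $\gamma(t)$ and using $v_{\star,\xi}\in H^3$ then gives $v-v_\star\in\mathcal{CH}^2([0,\infty);\R^m)$, so that $(v,\mu_1,\mu_2,\gamma)$ lies in the claimed space.

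For the exponential estimate, write $\sigma(t):=\gamma(t)-\mu_\star t-\varphi_\infty$ and $r(\xi,t):=u(\xi+\mu_\star t+\varphi_\infty,t)-v_\star(\xi)$, so that $\|r(\cdot,t)\|_{H^2}+\|r_t(\cdot,t)\|_{H^1}\le Ce^{-\eta t}$ by~\eqref{equ:3.10b}, while $|\sigma(0)|=|\varphi_\infty|$ and $\|r(\cdot,0)\|_{H^2}$ are of order $\rho$ by~\eqref{equ:3.10a},~\eqref{equ:3.17}. Since $u(\cdot+\gamma(t),t)=v_\star(\cdot+\sigma(t))+r(\cdot+\sigma(t),t)$, the phase condition combined with~\eqref{equ:3.16a} becomes the scalar relation $G(\sigma,t):=\langle v_\star(\cdot+\sigma)-v_\star,\hat v_\xi\rangle_{L^2}+\langle r(\cdot+\sigma,t),\hat v_\xi\rangle_{L^2}=0$, where $\partial_\sigma G(0,t)=\langle v_{\star,\xi},\hat v_\xi\rangle_{L^2}+O(e^{-\eta t})$ is bounded away from $0$ for $\rho$ small and $G(0,t)=\langle r(\cdot,t),\hat v_\xi\rangle_{L^2}=O(e^{-\eta t})$; hence $|\sigma(t)|\le Ce^{-\eta t}$, and differentiating $G(\sigma(t),t)=0$ and using $\|r_t(\cdot,t)\|_{H^1}\le Ce^{-\eta t}$ yields $|\mu_1(t)-\mu_\star|=|\sigma_t(t)|\le Ce^{-\eta t}$. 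Finally $v(\cdot,t)-v_\star=\bigl(v_\star(\cdot+\sigma(t))-v_\star\bigr)+r(\cdot+\sigma(t),t)$ and $v_t=u_t(\cdot+\gamma,t)+\mu_1u_\xi(\cdot+\gamma,t)=(\mu_1-\mu_\star)v_{\star,\xi}(\cdot+\sigma)+r_1$ with $\|r_1(\cdot,t)\|_{H^1}\le Ce^{-\eta t}$ (again by~\eqref{equ:3.10b}); together with $\|v_\star(\cdot+\sigma)-v_\star\|_{H^2}\le C|\sigma|$ and $v_{\star,\xi}\in H^1$, this gives~\eqref{equ:3.18}.

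It remains to note uniqueness: any $(v,\mu_1,\mu_2,\gamma)$ solving~\eqref{equ:2.19} in the stated class produces, via $u(x,t):=v(x-\gamma(t),t)$, a solution of~\eqref{equ:2.7} with data $(u_0,v_0)$ lying (for $\rho$ small) in the scope of Theorem~\ref{thm:4.17}, hence equal to the $u$ above; the phase condition then pins $\gamma$ to the implicit-function branch, and $v,\mu_1,\mu_2$ follow. The two points requiring the most care are the regularity bookkeeping — showing that composition with the time-dependent shift $\gamma(t)$ keeps $v-v_\star$ in $\mathcal{CH}^2$ and $\gamma$ in $C^2$, which forces one to track the one-derivative loss present in~\eqref{equ:3.10b} — and the bound on $\mu_1-\mu_\star=\sigma_t$, which is strictly stronger than the bound on $\sigma$ itself and hinges on the $H^1$-decay of $u_t$ supplied by Theorem~\ref{thm:4.17}; these are also the places where it is convenient to route the argument through the first order formulation of Appendix~\ref{sec:A}.
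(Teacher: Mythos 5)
Your proposal is correct in substance, but it takes a genuinely different route from the paper. The paper does \emph{not} derive Theorem~\ref{thm:freezestab} from Theorem~\ref{thm:4.17}: it invokes the first-order freezing stability theorem \cite[Thm.~2.7]{RottmannMatthes2012a} directly on the diagonalized system \eqref{equ:2.4N}, verifying the two extra phase-condition hypotheses via Assumption~\ref{ass:3.template} and the choice \eqref{equA:deftemplate}, and then spends its effort on transferring the result back to the second-order PDAE: reconstructing $\gamma=\int_0^t\mu$, upgrading regularity of $V_1$ through Lemma~\ref{LemmaA1} and the commuting-derivatives Lemma~\ref{lemA:commutederiv}, showing the auxiliary functions $W_2,W_3$ of \eqref{equA:Wnewdef} vanish so that \eqref{equ:3.2} is recovered, and converting the $H^1$ estimate through the matrix identity \eqref{equA:transestfreeze}. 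You instead treat the freezing system as ``Cauchy problem plus phase-fixing'': you take the solution $u$ from Theorem~\ref{thm:4.17}, recover $\gamma$ from the index-$3$ phase condition by the implicit function theorem, and read off the decay of $\sigma=\gamma-\mu_\star t-\varphi_\infty$ and $\sigma_t=\mu_1-\mu_\star$ from \eqref{equ:3.10b}; your identifications of \eqref{equ:2.13consist}, \eqref{equ:2.14consist} with $\gamma_t(0)=\mu_1^0$, $\gamma_{tt}(0)=\mu_2^0$, and your use of \eqref{equ:3.16a} to reduce the phase condition to $G(\sigma,t)=0$, are all right. What each approach buys: yours is more self-contained relative to the paper's own results, stays in the second-order picture, and makes transparent exactly where \eqref{equ:3.16b} and the consistency conditions enter; the paper's gets existence, uniqueness and the exponential estimate for the PDAE in one stroke from a cited theorem, and treats the phase condition abstractly as a bounded functional $\Psi$, so it is not tied to the fixed phase condition. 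Two points in your argument are glossed but standard and correctly flagged by you: the open-closed/continuation argument needed to keep the IFT branch $\gamma(t)$ (and any competing continuous root curve, for uniqueness) inside the region where $\partial_\gamma\Phi$ is bounded away from zero, and the chain-rule bookkeeping for $v(\xi,t)=u(\xi+\gamma(t),t)$ in $\mathcal{CH}^2$ with mixed derivatives commuting --- precisely the content the paper isolates in Lemmas~\ref{LemmaA1} and~\ref{lemA:commutederiv}, which you should cite rather than re-derive.
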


The proof builds on the fact that the
original second order version \eqref{equ:2.19} and the first order
version \eqref{equ:3.1} of the freezing method for traveling
waves in
\eqref{equ:2.1} are equivalent in suitable function spaces.
This will be detailed in Appendix~\ref{sec:A}

\appendix
\renewcommand{\theequation}{\Alph{section}.\arabic{equation}}

%
%
\sect{Proof of Stability Theorems}
\label{sec:A}

In this Appendix we provide a detailed proof of Theorems \ref{thm:4.17} and
\ref{thm:freezestab}.

\subsection{Results for first order systems}
\label{subsec:A1}

Let us recall the stability result from \cite[Thm.2.5]{RottmannMatthes2012a}
for first order systems of the general type
\begin{subequations} 
  \label{equ:diaghyp}
  \begin{align}
    & W_t= \Lambda_E W_x + G(W),
    \quad\,x\in\R,\,t\ge 0, W(x,t) \in \R^l \label{equ:hypdgl} \\
    & W(\cdot,0) = W_0. \label{equ:hypini}
  \end{align}
\end{subequations}
The assumptions are
\begin{enumerate}[label=\textup{(\roman*)},leftmargin=0.65cm]
  \setlength{\itemsep}{0cm}
  \item \label{itm:3.a} The matrix $\Lambda_E\in \R^{l,l}$ is diagonal.
\item \label{itm:3.b} The nonlinearity $G$ belongs to
  $C^3(\R^{l},\R^{l})$.
\item \label{itm:3.b1} There exists a traveling wave solution
  $W(x,t)=W_{\star}(x-\mu_{\star}t) $ of \eqref{equ:diaghyp} 
  such that $W_\star\in C^1_b(\R,\R^l)$, $W_{\star,\xi}\in H^2(\R,\R^l)$.
\item \label{itm:3.d} The matrix function
  $Y(\xi)=DG\bigl(W_\star(\xi)\bigr)$  satisfies
  $\lim_{\xi\to\pm\infty}Y(\xi)=Y_{\pm}$ and
  $\lim_{\xi\to\pm\infty}Y'(\xi)=0$.
\item \label{itm:3.c} The matrix $\Lambda_E+ \mu_\star I_l\in\R^{l,l}$ is
  nonsingular.
\item \label{itm:3.e} There is $\delta>0$ such that 
$\Re \{ s\in \C:s\in \sigma\bigl(i\omega (\Lambda_E+\mu_\star I_l)+Y_{\pm}\bigr)
 \;\; \text{for some}\; \; \omega\in\R\}\leq -\delta$.
\item \label{itm:3.f} The operator  $\mathcal{Y}_{\mathrm{1st}}=(\Lambda_E+\mu_{\star}
  I_l)\partial_\xi+Y(\cdot):H^1(\R,\R^l)\rightarrow L^2(\R,\R^l)$
   has the algebraically simple eigenvalue $0$ and 
  satisfies $\sigma_{\mathrm{point}}(\mathcal{Y}_{\mathrm{1st}})\cap \{\Re
  s>-\delta\} = \{0\}$.
\end{enumerate}

 Then for every $0<\eta<\delta$ there is $\rho_0>0$ so that for all
  $W_0\in W_\star+H^2(\R,\R^{l})$
    with $\|W_0-W_{\star}\|_{H^2}\leq\rho_0$ the Cauchy
  problem~(\ref{equ:diaghyp})  has a unique global
  solution $W\in W_{\star}+\mathcal{CH}^1([0,\infty);\R^{l})$.
    Moreover, there is $\varphi_\infty=\varphi_\infty(W_0)\in \R$ and
  $C=C(\eta,\rho_0)>0$
  such that
  \begin{equation} \label{equ:stabphase}
      |\varphi_\infty|\leq C\|W_0-W_{\star}\|_{H^2},
  \end{equation}
 \begin{equation} \label{equ:stabfunc}
    \|W(\cdot,t)-W_{\star}(\cdot-\mu_\star t-\varphi_\infty)\|_{H^1}
    \leq C\|W_0-W_{\star}\|_{H^2} e^{-\eta t}\quad\forall t\geq 0.
  \end{equation}
 In \cite[Thm.2.5]{RottmannMatthes2012a} the eigenvalues of $\Lambda_E$ are assumed to be in decreasing order. However, this was done  for convenience 
of the proof only, and the result holds verbatim without this ordering. 
Our goal is to apply the stability result to the system \eqref{equ:2.4N} where 
 $\Lambda_E$ is diagonal but  the eigenvalues are not ordered. In the following 
we show the assumptions \ref{itm:3.b}-\ref{itm:3.f}  for the system \eqref{equ:2.4N}. Our first observation is that instead of checking
assumptions \ref{itm:3.b}-\ref{itm:3.f} for the transformed data
$W_{\star}=T^{-1}V_{\star}$, $\Lambda_E=T^{-1}ET$ and $G=T^{-1}F T$, it is
sufficient to check them  for the data $V_{\star}$, $E$ and $F$ 
of the original system \eqref{equ:2.1sys}.

Condition \ref{itm:3.b} follows from Assumption \ref{ass:3.0}. Moreover,
conditon \ref{itm:3.b1} is a consequence of \eqref{equ:defvstar}
and Assumption \ref{ass:4.basic}. From \eqref{equ:2.1N} we obtain
(recall $(\star)=(v_{\star},v_{\star,\xi},-\mu_{\star}v_{\star,\xi})$)
\begin{equation} \label{equ:Zlin}
Z=DF(V_{\star}) = \begin{pmatrix}
    -cI_m & 0 & I_m \\ \Phi_1 & \Phi_2 & \Phi_3 \\ \Phi_1 & \Phi_2+cI_m & \Phi_3
    \end{pmatrix},
    \quad \begin{pmatrix} \Phi_1 \\ \Phi_2 \\ \Phi_3 \end{pmatrix}
    := \begin{pmatrix} M^{-1} D_1f(\star) - c \Phi_3 \\
    \frac{1}{2}M^{-1}(D_2f(\star)N^{-1}+D_3f(\star)) \\
    \frac{1}{2}M^{-1}(-D_2f(\star)N^{-1}+ D_3f(\star))
    \end{pmatrix}.
\end{equation}
By Assumption \ref{ass:4.basic} the limit is given by (recall $(\pm)=
(v_{\pm},0,0)$)
\begin{equation} \label{equ:Zlim}
Z_{\pm}=\lim_{\xi \rightarrow \pm \infty}Z(\xi)= \begin{pmatrix}
 -cI_m & 0 & I_m \\ \Phi^{\pm}_1 & \Phi^{\pm}_2 & \Phi^{\pm}_3 \\ \Phi^{\pm}_1 & \Phi^{\pm}_2+cI_m & \Phi^{\pm}_3
    \end{pmatrix},
    \quad \begin{pmatrix} \Phi^{\pm}_1 \\ \Phi^{\pm}_2 \\ \Phi^{\pm}_3 \end{pmatrix}
    := \begin{pmatrix} M^{-1} D_1f(\pm) -c \Phi_3^{\pm} \\
    \frac{1}{2}M^{-1}(D_2f(\pm)N^{-1}+D_3f(\pm)) \\
    \frac{1}{2}M^{-1}(-D_2f(\pm)N^{-1}+ D_3f(\pm))
    \end{pmatrix}.
    \end{equation}
Differentiating \eqref{equ:Zlin} w.r.t. $\xi$ and using Assumption \ref{ass:4.basic}
as well as \eqref{equ:limv2} then shows $Z'(\xi) \rightarrow 0$ as
$\xi \rightarrow \pm \infty$.
Further, condition \ref{itm:3.c} follows from Assumption \ref{ass:4.reg} as has
been noted in Section \ref{sec:3}. The conditions \ref{itm:3.e} and \ref{itm:3.f} are discussed in the next subsection.

\subsection{Spectral relations of first and second order problems}
\label{subsec:A2}
We transfer 
the spectral properties of the original second order
problem~(\ref{equ:2.1}) to the first order
problem~(\ref{equ:2.1sys}) and vice versa.
Throughout this section we impose  
 Assumptions \ref{ass:3.0}, \ref{ass:3.1}, 
\ref{ass:4.basic}  and define $V_{\star}$ by \eqref{equ:defvstar}.

By Definition~\ref{def:3.1}, the spectral problem for the second order
problem~(\ref{equ:2.1}), considered in a co-moving frame, is given by
the solvability properties of
\begin{equation*}
  \PL(\lambda):H^2(\R,\C^m)\to L^2(\R,\C^m),\quad
  \text{defined by \eqref{equ:4.1}}.
\end{equation*}
The analog for the first order formulation \eqref{equ:2.1sys} is the first order
differential operator
\begin{equation}
  \label{equ:3.3}
  \begin{aligned}
    \PL_{\mathrm{1st}}(\lambda)&:H^1(\R,\C^{3m}) \to L^2(\R,\C^{3m})\;\text{ given
      by}\\
    \PL_{\mathrm{1st}}(\lambda)&= \lambda I_{3m} 
       -\mathcal{Z}_{\mathrm{1st}},\quad 
\mathcal{Z}_{\mathrm{1st}}=(E + \mu_{\star}I_{3m})\partial_{\xi} + Z(\cdot),
     \end{aligned}
\end{equation}
obtained  by linearizing \eqref{equ:2.1sys} in the
co-moving frame about the traveling wave $V_\star$.
Introducing the first order operators
\begin{equation} \label{equ:abbrdiff}
\PL_{-N}(\lambda) = \lambda - (N+\mu_{\star}I_m) \partial_{\xi},
\quad
\PL_{+N}(\lambda) = \lambda + (N-\mu_{\star}I_m) \partial_{\xi},
\end{equation}
we may write $\PL_{\mathrm{1st}}(\lambda)$ as a block operator
\begin{equation} \label{equ:P1block}
\PL_{\mathrm{1st}}(\lambda)=
\begin{pmatrix} \PL_{-N}(\lambda) + c I_m & 0 & - I_m \\
-\Phi_1 & \PL_{-N}(\lambda)- \Phi_2 & - \Phi_3 \\
- \Phi_1 & - \Phi_2 -c I_m & \PL_{+N}(\lambda) - \Phi_3
\end{pmatrix}.
\end{equation}
Finally, it is convenient to introduce the normalized operator polynomial
\begin{align*}
\tilde{\PL}(\lambda) = M^{-1} \PL(\lambda), \quad \lambda \in \C,
\end{align*}
which has exactly the same spectrum as $\PL(\lambda)$.
The key to the relation of spectra is the following factorization
\begin{equation} \label{equ:factorization}
\begin{pmatrix} 0 & 0 & I_m \\ 0 & I_m & -I_m \\
I_m & 0 & 0
\end{pmatrix}   \PL_{\mathrm{1st}}(\lambda)=
\begin{pmatrix}
\tilde{\PL}(\lambda) & -\Phi_2-cI_m &  \PL_{+N}(\lambda)- \Phi_3 \\
0 & \PL_{-N}(\lambda)+c I_m & -\PL_{+N}(\lambda) \\
0 & 0 & - I_m
\end{pmatrix}
\begin{pmatrix}
I_m & 0 & 0 \\
- \PL_{+N}(\lambda) & I_m & 0 \\
-\PL_{-N}(\lambda)-cI_m & 0 & I_m
\end{pmatrix}.
\end{equation}
This follows from  \eqref{equ:Zlin} and \eqref{equ:P1block} by 
a straightforward but somewhat lengthy calculation.
The factorization \eqref{equ:factorization}
is motivated by the equivalence notion for matrix polynomials
(see e.g. \cite[Chapter S1.6]{GohbergLancasterRodman2009}).

Let us recall a well-known result on Fredholm properties for first
order operators from Palmer \cite{Palmer:1984}:

\begin{proposition} \label{propA1}
Consider a first order system
\begin{equation} \label{equ:syst1}
(\partial_{\xi}- Q(\xi))V= R \in L^2(\R,\C^N),
\end{equation}
where the matrix-valued function $Q: \R \rightarrow \C^{N,N}$ is continuous
and has limits
\begin{equation} \label{equ:limQ}
Q_{\pm}= \lim_{\xi \rightarrow \pm \infty}Q(\xi).
\end{equation}
Further assume that $Q_{\pm}$ have no eigenvalues on the imaginary axis. Then the operator
\begin{equation*}
\mathcal{Q}=\partial_{\xi}-Q(\cdot): H^1(\R,\C^N) \rightarrow L^2(\R,\C^N)
\end{equation*}
is Fredholm of index $\dim E_+^s- \dim E_-^s$, where $E_{\pm}^s\subseteq \C^N$
is the stable subspace of $Q_{\pm}$ (i.e. the maximal invariant
subspace associated with eigenvalues of negative real part). 
\end{proposition}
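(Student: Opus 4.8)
Since the statement is quoted verbatim from Palmer \cite{Palmer:1984}, in the actual write-up one simply refers to that paper; but the underlying argument, which I would reconstruct via exponential dichotomies, runs as follows. First I would pass from the hypothesis that $Q_\pm$ has no eigenvalue on the imaginary axis to exponential dichotomies: the constant-coefficient systems $V'=Q_\pm V$ are then hyperbolic, admitting exponential dichotomies on $\R$ with spectral projections of rank $k_\pm:=\dim E_\pm^s$. Because $Q(\xi)\to Q_\pm$ as $\xi\to\pm\infty$, the roughness theorem for exponential dichotomies (Coppel) yields that the variational equation $V'=Q(\xi)V$ has an exponential dichotomy on $[0,\infty)$ whose stable bundle $E_+(\xi)$ has rank $k_+$, and one on $(-\infty,0]$ whose unstable bundle $E_-(\xi)$ has rank $N-k_-$; here $E_+(\xi)$ (resp.\ $E_-(\xi)$) is the fibre of initial values at $\xi$ of solutions decaying exponentially as $\xi\to+\infty$ (resp.\ $\xi\to-\infty$).

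Second, I would identify the kernel. Since $H^1(\R,\C^N)\hookrightarrow C_b(\R,\C^N)$, while conversely every bounded solution of $V'=Q(\xi)V$ decays exponentially at both ends by the two dichotomies, one obtains $\kernel(\mathcal{Q})\cong E_+(0)\cap E_-(0)$, which is finite dimensional.

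Third --- and this is the technical heart --- I would show that $\mathcal{Q}$ has closed range with a finite-dimensional cokernel. Gluing the forward and backward dichotomy evolutions into a Green's function produces a bounded solution operator for $\mathcal{Q}V=R$ up to the obstruction caused by the non-transversality of $E_+(0)$ and $E_-(0)$; equivalently, $\mathcal{Q}V=R$ is solvable in $H^1$ iff $R$ is $L^2$-orthogonal to every $H^1$-solution $W$ of the adjoint equation $W'=-Q(\xi)^{\ast}W$. That equation is again asymptotically hyperbolic, with limits $-Q_\pm^{\ast}$, and from the invariant $\langle W(\xi),V(\xi)\rangle\equiv\mathrm{const}$ one reads off that its stable and unstable bundles at $0$ are $E_+(0)^{\perp}$ and $E_-(0)^{\perp}$. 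Hence $\range(\mathcal{Q})$ is closed and its codimension equals $\dim(E_+(0)^{\perp}\cap E_-(0)^{\perp})=N-\dim(E_+(0)+E_-(0))$; combined with the previous step this shows $\mathcal{Q}$ is Fredholm.

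Finally, the index is pure bookkeeping: using $\dim(U\cap V)+\dim(U+V)=\dim U+\dim V$,
\[
\mathrm{ind}(\mathcal{Q})=\dim\bigl(E_+(0)\cap E_-(0)\bigr)-N+\dim\bigl(E_+(0)+E_-(0)\bigr)=k_+ +(N-k_-)-N=k_+-k_-=\dim E_+^s-\dim E_-^s.
\]
The main obstacle is the third step: upgrading the (elementary, Fourier-transform) invertibility of the constant-coefficient limit operators $\partial_\xi-Q_\pm$ to closed range and the adjoint description of the cokernel for the variable-coefficient $\mathcal{Q}$; this requires the roughness theorem together with the explicit dichotomy Green's function and a compactness argument for the lower-order remainder (or an abstract perturbation argument). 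The kernel identification and the index count are then routine once the dichotomies are in place.
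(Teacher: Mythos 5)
Your argument is correct and is precisely the standard exponential-dichotomy proof underlying the cited reference: the paper itself gives no proof of Proposition~\ref{propA1} but simply quotes it from Palmer \cite{Palmer:1984}, whose argument proceeds exactly through hyperbolicity of $Q_{\pm}$, roughness of dichotomies on the two half-lines, identification of the kernel with $E_+(0)\cap E_-(0)$, the adjoint characterization of the (closed) range, and the dimension-count for the index. Your reconstruction, including the $H^1$/$L^2$ adaptation via exponential decay of kernel elements and the dichotomy Green's function, is accurate and complete at the level of detail expected here.
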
 
A consequence of this result for parametrized systems is the following

\begin{proposition} \label{propA2}
Consider a first order system
\begin{equation} \label{equ:syst2}
\mathcal{Q}(\lambda)V= (\partial_{\xi}- Q(\xi,\lambda))V= R \in L^2(\R,\C^l),
\end{equation}
with a matrix polynomial $Q(\xi,\lambda)=\sum_{j=0}^q Q_j(\xi) \lambda^j$,
$Q_j \in C(\R,\C^{l,l})$. Assume that  the limits
$\lim_{\xi \rightarrow \pm \infty}Q_j(\xi) = Q_j^{\pm}$ exist and let
$Q^{\pm}(\lambda)=\sum_{j=0}^q Q_j^{\pm} \lambda^j$.
Then the dispersion set
\begin{equation} \label{equ:Adisp}
\sigma_{\mathrm{disp}}(\mathcal{Q}) = \{ \lambda \in \C: \det(i \omega I -
Q^{\pm}(\lambda))=0\; \text{for some}\; \omega \in \R
\; \text{and some sign} \; \pm \}
\end{equation}
is contained in the essential spectrum $\sigma_{\mathrm{ess}}(\mathcal{Q})$.
For $\lambda \notin \sigma_{\mathrm{disp}}(\mathcal{Q})$, the operator
$\mathcal{Q}(\lambda):H^1(\R,\C^l)\rightarrow L^2(\R,\C^l)$ is Fredholm
of index $\dim E_+^s(\lambda)-\dim E_-^{s}(\lambda)$ where $E_{\pm}^s(\lambda)$
denotes the stable subspace of $Q^{\pm}(\lambda)$.
\end{proposition}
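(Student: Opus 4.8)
The plan is to reduce the statement to Proposition~\ref{propA1}, applied for each fixed $\lambda$ to the $\lambda$-frozen coefficient $\xi\mapsto Q(\xi,\lambda)$, and to complement this with a standard singular-sequence (Weyl-sequence) construction on the dispersion set, of the kind already used for Proposition~\ref{thm.4.2}. First I would fix $\lambda\in\C\setminus\sigma_{\mathrm{disp}}(\mathcal{Q})$ and set $\tilde{Q}(\xi):=Q(\xi,\lambda)=\sum_{j=0}^{q}Q_j(\xi)\lambda^j$. Since each $Q_j$ is continuous with $\lim_{\xi\to\pm\infty}Q_j(\xi)=Q_j^{\pm}$, the finite sum $\tilde{Q}$ is continuous on $\R$ and satisfies $\lim_{\xi\to\pm\infty}\tilde{Q}(\xi)=Q^{\pm}(\lambda)$. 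By the definition of $\sigma_{\mathrm{disp}}(\mathcal{Q})$ in \eqref{equ:Adisp}, $\lambda\notin\sigma_{\mathrm{disp}}(\mathcal{Q})$ means $\det(i\omega I-Q^{\pm}(\lambda))\neq 0$ for every $\omega\in\R$ and both signs, i.e.\ neither $Q^{+}(\lambda)$ nor $Q^{-}(\lambda)$ has an eigenvalue on the imaginary axis. Proposition~\ref{propA1}, applied with $N=l$ and $Q:=\tilde{Q}$, then shows that $\mathcal{Q}(\lambda)=\partial_\xi-\tilde{Q}(\cdot):H^1(\R,\C^l)\to L^2(\R,\C^l)$ is Fredholm of index $\dim E_+^s(\lambda)-\dim E_-^s(\lambda)$, where $E_{\pm}^s(\lambda)$ is the stable subspace of $Q^{\pm}(\lambda)$. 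This is the second assertion.

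It remains to prove $\sigma_{\mathrm{disp}}(\mathcal{Q})\subseteq\sigma_{\mathrm{ess}}(\mathcal{Q})$, for which I would show that $\mathcal{Q}(\lambda)$ fails to be Fredholm for every $\lambda\in\sigma_{\mathrm{disp}}(\mathcal{Q})$. Fix such a $\lambda$, choose $\omega\in\R$, a sign (say $+$; the case $-$ is symmetric, cutting off at $-\infty$) and $0\neq z\in\C^l$ with $(i\omega I-Q^{+}(\lambda))z=0$, so that $\xi\mapsto e^{i\omega\xi}z$ solves the constant-coefficient system associated with $Q^{+}(\lambda)$. Pick smooth cut-offs $\chi_n\in C_c^\infty(\R)$ supported in $[n,2n]$, equal to $1$ on $[n+1,2n-1]$, with $\|\chi_n\|_\infty$ and $\|\chi_n'\|_\infty$ bounded uniformly in $n$, and set $V_n:=\|\chi_n e^{i\omega\cdot}z\|_{L^2}^{-1}\,\chi_n\,e^{i\omega\cdot}z\in H^1(\R,\C^l)$. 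Using $i\omega z=Q^{+}(\lambda)z$ one computes $(\partial_\xi-Q(\cdot,\lambda))(e^{i\omega\cdot}z)=(Q^{+}(\lambda)-Q(\cdot,\lambda))e^{i\omega\cdot}z$, whence
\[
\mathcal{Q}(\lambda)V_n=\frac{\chi_n'\,e^{i\omega\cdot}z+\chi_n\,(Q^{+}(\lambda)-Q(\cdot,\lambda))e^{i\omega\cdot}z}{\|\chi_n e^{i\omega\cdot}z\|_{L^2}} .
\]
Since $\|\chi_n e^{i\omega\cdot}z\|_{L^2}\ge|z|\sqrt{n-2}$, the quantity $\|\chi_n' e^{i\omega\cdot}z\|_{L^2}$ is bounded uniformly in $n$, and $\sup_{\xi\ge n}|(Q^{+}(\lambda)-Q(\xi,\lambda))z|\to 0$ by the decay of $Q(\cdot,\lambda)-Q^{+}(\lambda)$ at $+\infty$, it follows that $\|\mathcal{Q}(\lambda)V_n\|_{L^2}\to 0$. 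On the other hand $\|V_n\|_{L^2}=1$, the sequence $\{\|V_n\|_{H^1}\}$ is bounded, and $V_n\rightharpoonup 0$ in $H^1$ because the supports of $V_n$ escape to $+\infty$. Thus $\{V_n\}$ is a singular sequence for $\mathcal{Q}(\lambda)$, and the usual argument — decompose $V_n$ along $\ker\mathcal{Q}(\lambda)$ and use that on a closed range $\mathcal{Q}(\lambda)$ is bounded below — shows that $\mathcal{Q}(\lambda)$ cannot be Fredholm.

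Finally, points of $\rho(\mathcal{Q})$ are Fredholm (being boundedly invertible), and an isolated eigenvalue of finite multiplicity of the analytic operator polynomial $\mathcal{Q}$ is Fredholm of index $0$ (this is part of the theory of normal points of operator polynomials; cf.\ \cite{KozlovMazja1999,Markus1988,MennickenMoeller2003}); since $\mathcal{Q}(\lambda)$ is not Fredholm for $\lambda\in\sigma_{\mathrm{disp}}(\mathcal{Q})$, such $\lambda$ lies neither in $\rho(\mathcal{Q})$ nor in $\sigma_{\mathrm{point}}(\mathcal{Q})$, hence is not a normal point, i.e.\ $\lambda\in\sigma_{\mathrm{ess}}(\mathcal{Q})$. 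Together with the first paragraph this proves the proposition. I expect the only mildly delicate point to be the interplay of the lower bound $\|\chi_n e^{i\omega\cdot}z\|_{L^2}\gtrsim\sqrt{n}$ with the decay of $Q(\cdot,\lambda)-Q^{+}(\lambda)$ near $+\infty$ that forces $\|\mathcal{Q}(\lambda)V_n\|_{L^2}\to 0$; everything else is routine and parallels the proofs of Propositions~\ref{thm.4.2} and~\ref{propA1}.
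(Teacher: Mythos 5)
Be aware that the paper does not actually prove Proposition~\ref{propA2}; it cites \cite[Theorem~3.1.13]{KapitulaPromislov2013} and moves on, so any self-contained argument is a different route by default. Yours is the standard one, and its two main steps are correct. The Fredholm-index assertion is, exactly as you say, an immediate application of Proposition~\ref{propA1} to the frozen coefficient $\xi\mapsto Q(\xi,\lambda)$: the condition $\lambda\notin\sigma_{\mathrm{disp}}(\mathcal{Q})$ is precisely the hypothesis that $Q^{\pm}(\lambda)$ has no purely imaginary eigenvalues. The Weyl-sequence construction is the same device the paper sketches for the second-order analogue (Proposition~\ref{thm.4.2}), and your bookkeeping is right: $\|\chi_n e^{i\omega\cdot}z\|_{L^2}\ge |z|\sqrt{n-2}$ grows like $\sqrt{n}$, $\|\chi_n' e^{i\omega\cdot}z\|_{L^2}$ stays bounded, and the second term of the numerator is bounded by $\sup_{\xi\ge n}\|Q(\xi,\lambda)-Q^{+}(\lambda)\|$ times a quantity of order $\sqrt{n}$, so the quotient tends to zero; together with $V_n\rightharpoonup 0$ and $\|V_n\|_{L^2}=1$ this rules out closed range with finite-dimensional kernel.

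The one step I would not accept as written is the final claim that a normal point in the sense of Definition~\ref{def:3.1} is automatically a Fredholm point. Finite multiplicity there is defined through $\dim\kernel(\PL(\lambda_0))$ and the maximal length of root polynomials, and neither forces the range of $\mathcal{Q}(\lambda_0)$ to be closed. Concretely, with $S$ the Volterra operator on $L^2(0,1)$ (compact, injective, dense non-closed range, $\sigma(S)=\{0\}$), the pencil $\mathcal{A}(\lambda)=(S-\lambda)\oplus\lambda$ on $L^2(0,1)\oplus\C$ has $\lambda_0=0$ as an isolated eigenvalue with one-dimensional kernel and maximal partial multiplicity one --- a normal point --- although $\mathcal{A}(0)$ is not semi-Fredholm and even admits a singular sequence. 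So ``not Fredholm'' does not by itself exclude normality under the paper's definition, and the references you cite establish the equivalence only under a standing Fredholmness assumption. Kapitula--Promislow avoid the issue by defining the essential spectrum directly through Fredholm properties; under the paper's Henry-style definition \cite{Henry1981} you should instead note that a normal eigenvalue must be isolated in $\sigma(\mathcal{Q})$, whereas your singular sequences place the entire dispersion curve through $\lambda_0$ inside $\sigma(\mathcal{Q})$, so $\lambda_0$ could be normal only if it were an isolated point of $\sigma_{\mathrm{disp}}(\mathcal{Q})$ --- a degeneracy that does not occur here. The same soft spot sits in the paper's own one-line justification of Proposition~\ref{thm.4.2}, so it is a flaw you share with the authors rather than one you introduced; everything else in your argument is sound.
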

This result may be found in \cite[Theorem 3.1.13]{KapitulaPromislov2013}
(note that the dispersion set is called the Fredholm border there).

If we replace $\partial_{\xi}$ by $i \omega$ and let $\xi \rightarrow \pm \infty$ in \eqref{equ:factorization}
then the left and right factors in \eqref{equ:factorization} are 
$\lambda$-dependent matrices with a constant determinant (see the
equivalence notion of matrix polynomials in
\cite[Chapter S1.6]{GohbergLancasterRodman2009}).
Hence the dispersion set  of the first order operator
$\PL_{\mathrm{1st}}(\lambda)$ is completely
determined by the dispersion set \eqref{equ:4.11a} of the second order
operator $\tilde{\PL}(\lambda)$
and the first order operator $\PL_{-N}(\lambda)+cI_m$.
Since $N+\mu_{\star}I_m$ has nonzero real eigenvalues
$\lambda_j+\mu_{\star},j=1,\ldots,m$ by \eqref{equ:2.4N} we find from
Propositions \ref{propA1} and \ref{propA2}  
\begin{align*}
\sigma(\PL_{-N}+cI_m)= \sigma_{\mathrm{disp}}(\PL_{-N}+cI_m)=\{
c+(\lambda_j+\mu_{\star})i \omega: \omega \in \R, j=1,\ldots,m\}= c + i \R.
\end{align*}
This yields the following result.
\begin{proposition}\label{prop:3.9}
 The dispersion sets satisfy
 \begin{equation} \label{equ:disprelate}
  \sigma_{\mathrm{disp}}(\PL_\mathrm{1st})= 
  \sigma_{\mathrm{disp}}(\PL) \cup (c + i \R).
  \end{equation}
 \end{proposition}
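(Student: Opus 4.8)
The plan is to obtain Proposition~\ref{prop:3.9} directly from the factorization \eqref{equ:factorization}, which we may treat as already established. The first step is to pass \eqref{equ:factorization} to its constant-coefficient limits at $\xi\to\pm\infty$: the identity holds verbatim for \emph{any} admissible coefficient functions, so replacing $\Phi_1,\Phi_2,\Phi_3$ throughout by their limits $\Phi_1^{\pm},\Phi_2^{\pm},\Phi_3^{\pm}$ from \eqref{equ:Zlim} yields a factorization of the asymptotic operator $\PL_{\mathrm{1st}}^{\pm}(\lambda)$ in terms of the asymptotic operators $\tilde{\PL}^{\pm}(\lambda)$ and $\PL_{\pm N}(\lambda)$, all of which are now constant coefficient. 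The second step is to apply the Fourier symbol $\partial_\xi\mapsto i\omega$, $\omega\in\R$: since every operator in the limiting identity is constant coefficient, symbols multiply and the identity becomes an identity between $m\times m$ and $3m\times 3m$ matrices depending on $(\lambda,\omega)$. Taking determinants of that matrix identity, and recalling that by Proposition~\ref{propA2} the dispersion set of a first-order pencil is precisely the $\omega$-locus of zeros of its symbol determinant (over both signs), is then essentially the whole argument.

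For the determinant bookkeeping one uses that the two outer matrices in \eqref{equ:factorization} have determinants equal to nonzero constants, independent of $\lambda$ and $\omega$: the left factor becomes a block antidiagonal permutation matrix after a single block row operation, so its determinant is $\pm1$, and the right factor is block lower triangular with identity diagonal blocks, so its determinant is $1$. This is exactly the equivalence-of-matrix-polynomials situation referred to after \eqref{equ:factorization}, cf.\ \cite[Chapter~S1.6]{GohbergLancasterRodman2009}. The middle factor is block upper triangular with diagonal blocks $\tilde{\PL}^{\pm}(\lambda)$, $\PL_{-N}(\lambda)+cI_m$ and $-I_m$ (all with $\partial_\xi$ replaced by $i\omega$), so its determinant is the product of the three corresponding block determinants. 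Hence, up to a fixed nonzero scalar, the symbol determinant of $\PL_{\mathrm{1st}}^{\pm}(\lambda)$ equals the product of the symbol determinants of $\tilde{\PL}^{\pm}(\lambda)$ and of $\PL_{-N}(\lambda)+cI_m$. Therefore, for each fixed sign and each $\omega\in\R$, the former vanishes if and only if one of the latter two does; taking the union over $\omega\in\R$ and over both signs gives $\sigma_{\mathrm{disp}}(\PL_{\mathrm{1st}})=\sigma_{\mathrm{disp}}(\tilde{\PL})\cup\sigma_{\mathrm{disp}}(\PL_{-N}+cI_m)$.

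It then remains to identify the two pieces. Since $M$ is invertible, $\tilde{\PL}(\lambda)=M^{-1}\PL(\lambda)$ has symbol determinant equal to $\det(M)^{-1}$ times that of $\PL(\lambda)$, so its zero set coincides with \eqref{equ:4.11a}, that is $\sigma_{\mathrm{disp}}(\tilde{\PL})=\sigma_{\mathrm{disp}}(\PL)$. For the other piece, $\PL_{-N}(\lambda)+cI_m=(\lambda+c)I_m-(N+\mu_\star I_m)\partial_\xi$ has a singular Fourier symbol for some $\omega\in\R$ precisely on a vertical line parallel to the imaginary axis; since each $\lambda_j+\mu_\star$ is real and nonzero — which follows because $A-\mu_\star^2 M=M(N-\mu_\star I_m)(N+\mu_\star I_m)$ is nonsingular by Assumption~\ref{ass:4.reg} — this line is exactly $c+i\R$, as in the computation recorded just before the statement. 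Combining, $\sigma_{\mathrm{disp}}(\PL_{\mathrm{1st}})=\sigma_{\mathrm{disp}}(\PL)\cup(c+i\R)$, which is \eqref{equ:disprelate}. The only point needing a little care is the legitimacy of first freezing the coefficients at $v_\pm$ in \eqref{equ:factorization} and only then passing to Fourier symbols and taking determinants; but after freezing, all operators involved are constant-coefficient differential operators, for which composition corresponds to multiplication of symbols and the determinant is multiplicative, so this step is routine. The single genuinely nontrivial ingredient, the factorization \eqref{equ:factorization} itself, is already in hand, so no serious obstacle remains.
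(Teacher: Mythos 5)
Your argument is correct and is essentially the paper's own proof: pass to the constant-coefficient limits of \eqref{equ:factorization} at $\xi\to\pm\infty$, replace $\partial_\xi$ by $i\omega$, use that the two outer factors have constant nonzero determinant while the middle factor is block triangular with diagonal blocks $\tilde{\PL}^{\pm}(\lambda)$, $\PL_{-N}(\lambda)+cI_m$, $-I_m$, and then identify the two resulting pieces of the dispersion set; the only difference is that you spell out the determinant bookkeeping explicitly where the paper appeals to the equivalence notion of \cite[Chapter S1.6]{GohbergLancasterRodman2009}. One caveat you inherit from the paper's own computation: the Fourier symbol of $\PL_{-N}(\lambda)+cI_m$ is $(\lambda+c)I_m-i\omega(N+\mu_{\star}I_m)$, whose degeneracy locus over $\omega\in\R$ is the vertical line $\Re\lambda=-c$ rather than $\Re\lambda=c$, a harmless sign slip for the proposition itself (replace $c+i\R$ by $-c+i\R$ in \eqref{equ:disprelate}) but one that must be carried through to the subsequent choice of the shift parameter $c$.
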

 
This proposition leads to a proper choice of the shift parameter $c$.
Taking $c < -\delta$ , condition \ref{itm:3.e} immediately follows
from Assumption \ref{ass:dispstab}.
The following proposition relates the point spectra of the second order 
operator $\PL$ and the first order operator $\PL_{\mathrm{1st}}$ to each
other.
\begin{proposition}\label{prop:specneu}
 The following assertions hold:
  \begin{enumerate}[label=\textup{(\alph*)},leftmargin=0.85cm]
\item \label{itm:thmA:1}
There exists a $\lambda_{\star}>c$ such that
$\sigma_{\mathrm{disp}}(\PL_{\mathrm{1st}}) \cap [\lambda_{\star}, \infty)=
\emptyset$.
  \item \label{itm:thmA:ii} Let $\rho_+$ be the connected component
of $\{\lambda \in \C:\Re \lambda > c\} \setminus
 \sigma_{\mathrm{disp}}(\PL_{\mathrm{1st}})$ containing $[\lambda_{\star},\infty)$.
  Then the operator $\PL_\mathrm{1st}(\lambda):H^1(\R,\C^{2m})\to L^2(\R,\C^{2m})$ is  Fredholm of index $0$ for all $\lambda \in \rho_+$.
  \item \label{itm:thmA:iii}
  The point spectra of $\PL_{\mathrm{1st}}$  and $\PL(\lambda):H^2(\R,\C^m)\to L^2(\R,\C^m)$ in $\rho_+$ coincide, i.e.
  \begin{equation} \label{equA:point}
  \sigma_{\mathrm{point}}(\PL)\cap \rho_+=
  \sigma_{\mathrm{point}}(\PL_{\mathrm{1st}})\cap \rho_+.
  \end{equation}
  Eigenvalues in these sets have the same geometric and maximum partial multiplicity.
\end{enumerate}
\end{proposition}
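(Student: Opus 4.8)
The plan is to derive part (a) together with invertibility of $\PL_{\mathrm{1st}}$ far to the right from a crude spectral bound, to get part (b) from constancy of the Fredholm index on connected components, and to get part (c) from the factorization~\eqref{equ:factorization}. Throughout I use that Assumption~\ref{ass:4.reg} is in force, so that $N\pm\mu_\star I_m$ and hence $E+\mu_\star I_{3m}$ are nonsingular. For (a): since $\mathcal{Z}_{\mathrm{1st}}=(E+\mu_\star I_{3m})\partial_\xi+Z(\cdot)$ and $E+\mu_\star I_{3m}$ is real diagonalizable, conjugation by a constant invertible matrix $S$ turns the principal part into $\Lambda\partial_\xi$ with $\Lambda$ real diagonal, whence $\sigma((E+\mu_\star I_{3m})\partial_\xi)=i\R$ and, $Z(\cdot)$ being a bounded multiplication operator on $L^2(\R,\C^{3m})$, the resolvent estimate $\|(zI-S\Lambda\partial_\xi S^{-1})^{-1}\|_{\mathcal{B}(L^2)}\le|\Re z|^{-1}$ yields $\sigma(\mathcal{Z}_{\mathrm{1st}})\subseteq\{z:|\Re z|\le K\}$ with $K:=\sup_\xi\|S^{-1}Z(\xi)S\|$. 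Because $\sigma_{\mathrm{disp}}(\PL_{\mathrm{1st}})\subseteq\sigma_{\mathrm{ess}}(\PL_{\mathrm{1st}})\subseteq\sigma(\PL_{\mathrm{1st}})=\sigma(\mathcal{Z}_{\mathrm{1st}})$, any $\lambda_\star>\max(K,c)$ proves (a) and also makes $\PL_{\mathrm{1st}}(\lambda_\star)=\lambda_\star I_{3m}-\mathcal{Z}_{\mathrm{1st}}:H^1(\R,\C^{3m})\to L^2(\R,\C^{3m})$ boundedly invertible; consequently $[\lambda_\star,\infty)$ is a connected subset of $\{\Re\lambda>c\}\setminus\sigma_{\mathrm{disp}}(\PL_{\mathrm{1st}})$ and so lies in $\rho_+$.

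For (b): by Proposition~\ref{propA2} applied to $-(E+\mu_\star I_{3m})^{-1}\PL_{\mathrm{1st}}(\lambda)=\partial_\xi-Q(\xi,\lambda)$ (a constant-matrix multiple of $\PL_{\mathrm{1st}}(\lambda)$), the operator $\PL_{\mathrm{1st}}(\lambda):H^1(\R,\C^{3m})\to L^2(\R,\C^{3m})$ is Fredholm for every $\lambda\notin\sigma_{\mathrm{disp}}(\PL_{\mathrm{1st}})$. Hence on the connected open set $\rho_+$ the map $\lambda\mapsto\PL_{\mathrm{1st}}(\lambda)$ is a holomorphic family of Fredholm operators, so its index is constant on $\rho_+$; evaluating at $\lambda_\star\in\rho_+$, where the operator is invertible, gives index $0$ throughout $\rho_+$.

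For (c), I would work with~\eqref{equ:factorization}, written $\Pi\,\PL_{\mathrm{1st}}(\lambda)=\mathcal{U}(\lambda)\,\mathcal{L}(\lambda)$ with $\Pi$ the constant invertible left factor, $\mathcal{U}(\lambda)$ the block-upper-triangular factor with diagonal blocks $\tilde{\PL}(\lambda)$, $\PL_{-N}(\lambda)+cI_m$, $-I_m$, and $\mathcal{L}(\lambda)$ the unipotent block-lower-triangular factor with $\mathcal{L}(\lambda)^{-1}$ affine in $\lambda$. The elementary observation that does the work is that, because $N\pm\mu_\star I_m$ is invertible, one has $\PL_{\pm N}(\lambda)w\in H^1$ if and only if $w\in H^2$; this makes $\mathcal{L}(\lambda)^{-1}(w_1,0,0)^\top=(w_1,\PL_{+N}(\lambda)w_1,(\PL_{-N}(\lambda)+cI_m)w_1)^\top$ land in $H^1(\R,\C^{3m})$ exactly when $w_1\in H^2(\R,\C^m)$. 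For $\lambda\in\rho_+$ we have $\lambda\notin c+i\R=\sigma_{\mathrm{disp}}(\PL_{-N}+cI_m)\subseteq\sigma_{\mathrm{disp}}(\PL_{\mathrm{1st}})$ by Proposition~\ref{prop:3.9}, so $\PL_{-N}(\lambda)+cI_m:H^1(\R,\C^m)\to L^2(\R,\C^m)$ is an isomorphism, and $-I_m$ trivially is. Reading off the block-upper-triangular structure of $\mathcal{U}(\lambda)$ then shows $V\in\kernel(\PL_{\mathrm{1st}}(\lambda))$ iff $V=\mathcal{L}(\lambda)^{-1}(w_1,0,0)^\top$ with $w_1\in\kernel(\tilde{\PL}(\lambda))=\kernel(\PL(\lambda))$ (both on $H^2(\R,\C^m)\to L^2(\R,\C^m)$); this is a linear isomorphism of kernels, so the two spectra in $\rho_+$ coincide set-theoretically and geometric multiplicities agree. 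Since $\PL_{\mathrm{1st}}(\lambda_\star)$ is invertible, $\PL_{\mathrm{1st}}$ is not identically singular on $\rho_+$, so by the analytic Fredholm theorem $\sigma(\PL_{\mathrm{1st}})\cap\rho_+$ is discrete, each point being an isolated eigenvalue of finite algebraic multiplicity; the same holds for $\PL$ (a holomorphic Fredholm family of index $0$ on $\rho_+$ by the same factorization together with (b)), giving~\eqref{equA:point}. Equality of maximum partial multiplicities follows by transporting root polynomials through~\eqref{equ:factorization}: a root polynomial $w(\lambda)=\sum_{j=0}^{n-1}(\lambda-\lambda_0)^jw_j$ of $\tilde{\PL}$ at $\lambda_0$ gives $V(\lambda):=\mathcal{L}(\lambda)^{-1}(w(\lambda),0,0)^\top$ with $\PL_{\mathrm{1st}}(\lambda)V(\lambda)=\Pi^{-1}(\tilde{\PL}(\lambda)w(\lambda),0,0)^\top=O((\lambda-\lambda_0)^n)$ and $V(\lambda_0)\neq 0$; conversely, invertibility of $\PL_{-N}(\lambda_0)+cI_m$ forces (by induction on the Taylor order) the second and third components of $\mathcal{L}(\lambda)V(\lambda)$ to vanish to order $n$, after which Assumption~\ref{ass:4.reg} keeps the surviving first component in $H^2$ and reduces the root polynomial to one of $\tilde{\PL}$.

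I expect the function-space bookkeeping in the factorization to be the main obstacle: one must check carefully that $\mathcal{L}(\lambda)$ transports $H^1(\R,\C^{3m})$ onto the graph-type subspace on which $\mathcal{U}(\lambda)$ is analyzed, that this is compatible with the natural $H^2$-domain of the second-order operator $\tilde{\PL}$, and---for the partial-multiplicity assertion---carry out the order-counting root-polynomial chase (invoking the normalization in Remark~\ref{rem:3.2}). Everything else reduces to standard Fredholm theory.
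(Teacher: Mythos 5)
Your proof is correct and follows essentially the same route as the paper's: Fredholmness off the dispersion set via Proposition~\ref{propA2} together with constancy of the index on the connected set $\rho_+$ for (a)--(b), and the factorization \eqref{equ:factorization} combined with the regularity bootstrap $V_1\in H^2(\R,\C^m)$, the invertibility of $\PL_{-N}(\lambda)+cI_m$ on $\rho_+$, and the transport of root polynomials for (c). The only (harmless) deviation is in part (a), where you localize $\sigma(\mathcal{Z}_{\mathrm{1st}})$ in a vertical strip by a bounded-perturbation argument for the skew-adjoint principal part, whereas the paper checks directly that the limit matrices $(E+\mu_\star I_{3m})^{-1}(\lambda I_{3m}-Z_{\pm})$ are hyperbolic for large real $\lambda$; your variant has the minor advantage of giving invertibility, not merely Fredholm index $0$, at $\lambda_\star$.
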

Let us first note that this proposition implies  condition \ref{itm:3.f}. 
For the choice $c < -\delta$ the set  $\rho_+$ contains 
$\{\Re \lambda >-\delta\}$ by Assumption \ref{ass:dispstab} and
Proposition \ref{prop:3.9}. 
Condition \ref{itm:3.f}  is then a consequence of Assumption
\ref{ass:evstab} and assertion \ref{itm:thmA:iii} of Proposition
\ref{prop:specneu}.

\begin{proof}
  Using Assumption \ref{ass:4.reg} we can rewrite the operator from  \eqref{equ:3.3} as follows
\begin{align*}
\PL_{\mathrm{1st}}(\lambda) = - (E+ \mu_{\star}I_{3m})(\partial_\xi
-  (E+ \mu_{\star}I_{3m})^{-1}(\lambda I_{3m} - DF(V_{\star}))).
\end{align*}
 The matrix $(E+ \mu_{\star}I_{3m})^{-1}$ is hyperbolic by Assumption \ref{ass:4.reg} and this property persists for the matrix
$(E+ \mu_{\star}I_{3m})^{-1}(\lambda I_{3m} - DF(V_{\pm}))$ for $\lambda \ge \lambda_{\star}$
sufficiently large, independently of the sign $\pm$ and with the same number of stable and unstable eigenvalues. Therefore $\PL_{\mathrm{1st}}(\lambda)$
is Fredholm of index $0$ by Proposition \ref{propA2} for
$\lambda \in [\lambda_{\star},\infty)$. Since the Fredholm
index is continuous in $\rho_+$ and can only change at $\sigma_{\mathrm{disp}}(\PL_{\mathrm{1st}})$ or at  $c+i\R$,
assertion \ref{itm:thmA:ii} also follows.

Consider an eigenvalue $\lambda_0 \in \sigma_{\mathrm{point}}(\PL_{\mathrm{1st}})\cap \rho_+$ with eigenfunction
$V=(V_1,V_2,V_3)^{\top} \in H^1(\R,\C^{3m}), V \ne 0$. The first block equation
reads $(\PL_{-N}(\lambda_0)+ c I_m) V_1 = V_3 \in H^1$ from which we infer
$V_1 \in H^2(\R,\C^m)$. In the following let us write the factorization \eqref{equ:factorization}
in the short form
\begin{align} \label{equ:factorshort}
T_1 \PL_{\mathrm{1st}}(\lambda)= R(\lambda) T_2(\lambda)
\end{align}
and apply it to $V$. Then $W(\lambda_0):=T_2(\lambda_0)V$ satisfies
$R(\lambda_0)W(\lambda_0)=0$, and from the triangular structure of $R$
and the invertibility of $\PL_{-N}(\lambda_0)+cI_m$ we obtain
$W_3=0,W_2=0$ as well as $\tilde{\PL}(\lambda_0)V_1=\tilde{\PL}(\lambda_0)W_1
=0$. If $V_1=0$ then $V_2=0,V_3=0$ follows from $W_2=0,W_3=0$, hence 
$V_1 \ne0$. In a similar manner, if $\tilde{\PL}(\lambda_0)W_1=0$ for
some $W_1 \in H^2(\R,\C^m), W_1 \ne 0$ then $\PL_{\mathrm{1st}}(\lambda_0)V=0$ and $V \ne 0$ for 
$V=T_2(\lambda_0)^{-1}\begin{pmatrix} W_1 & 0 & 0 \end{pmatrix}^{\top}$. By the same argument the null spaces
$\mathcal{N}(\PL_{\mathrm{1st}}(\lambda_0))$ and 
$\mathcal{N}(\tilde{\PL}(\lambda_0)$) have 
equal dimension.

 Finally, consider a root polynomial $V(\lambda)=
\sum_{j=0}^n V_{[j]}(\lambda -\lambda_0)^j$ with $V_{[j]}\in H^1(\R,\C^{3m})$
satisfying
\begin{align*}
V(\lambda_0)=V_{[0]} \ne 0, \quad 
(\PL_{\mathrm{1st}}V)^{(\nu)}(\lambda_0) =0, \nu=0,\ldots,n-1.
\end{align*}
As above we find $V_{[0],1} \in H^2(\R,\C^m)$, $V_{[0],1}\ne 0$ and then by induction
$V_{[j],1} \in H^2(\R,\C^m), j=1,\ldots,n$ from the equations
\begin{align*}
\nu! \ \PL_{\mathrm{1st}}(\lambda_0)  V_{[\nu]}=
- \sum_{\ell =1}^{\nu} {\nu \choose \ell}
\PL_{\mathrm{1st}}^{(\ell)}(\lambda_0) 
V^{(\nu-\ell)}(\lambda_0).
\end{align*}
Note that the right-hand side is in $H^1(\R,\C^{3m})$ since the $\lambda$-derivative
of $\PL_{\mathrm{1st}}$ is $I_{3m}$. Setting $W(\lambda)=T_2(\lambda)V(\lambda)$ then leads
via  \eqref{equ:factorshort} to
\begin{align*}
(R W)^{(\nu)}(\lambda_0)=0, \nu=0,\ldots,n-1.
\end{align*} 
Working backwards through the components of this  equation gives
$W^{(\nu)}_k(\lambda_0)=0,\nu=0,\ldots, n-1$ for $k=3,2$, and therefore,
\begin{align*}
0= (\tilde{\PL} W_1)^{(\nu)}(\lambda_0), \nu=0,\ldots,n-1,
\end{align*}
with $W_1(\lambda_0)=V_1(\lambda_0) \ne 0$.

Conversely, let $W_1(\lambda)= \sum_{j=0}^{n-1} (\lambda -\lambda_0)^j W_{[j],1}$
be a root polynomial of $\tilde{\PL}$ in $H^2(\R,\C^m)$ with $W_{[0],1}\ne 0$. Then
we set $W(\lambda)= \begin{pmatrix} W_1(\lambda) & 0 & 0 
\end{pmatrix}^{\top}$ and find that
\begin{align*}
V(\lambda)= T_2(\lambda)^{-1}W(\lambda) = \begin{pmatrix}
W_1(\lambda) & \PL_{+N}(\lambda)W_1(\lambda) & (-\PL_{-N}(\lambda)+cI_m)
W_1(\lambda) \end{pmatrix}^{\top} \in H^1(\R,\C^{3m})
\end{align*}
satisfies $V(\lambda_0)\ne 0$ and 
\begin{align*}
T_1 (\PL_{\mathrm{1st}}V)^{(\nu)}(\lambda_0)=
(R W)^{(\nu)}(\lambda_0)=0 , \nu=0,\ldots,n-1.
\end{align*}
\end{proof}

\subsection{Stability for the second order system}
\label{subsec:A3}

In the following we consider the Cauchy problem \eqref{equ:2.7sysA}
and recall the function spaces \eqref{equ:2.4F}. We need two auxiliary results.
The first one is regularity of solutions with respect to source terms
taken from the theory of linear
first order systems (see \cite[Cor.2.2.2]{Rauch12}).
\begin{lemma} \label{LemmaA1}
Consider a first order system
\begin{equation} \label{equA:firstorder}
u_t = A_1 u_x + B_1 u + r, \quad u(x,0)=u_0(x), \quad x\in \R, t\ge 0,
\end{equation}
where $A_1 \in \R^{l,l}$ is real diagonalizable and $B_1\in \R^{l,l}$.
If $u_0\in H^k(\R,\R^l)$ for some $k \ge 1$ and $r \in \mathcal{CH}^{k-1}([0,\infty);\R^l)$ then the system \eqref{equA:firstorder}
has a unique solution in $u\in \mathcal{CH}^k([0,\infty);\R^l)$.
\end{lemma}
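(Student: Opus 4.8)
The plan is to reduce the system to diagonal form and then run the standard linear hyperbolic machinery (as in \cite[Cor.~2.2.2]{Rauch12}); I indicate the ingredients. First I would use that $A_1$ is real diagonalizable to write $A_1=S\Lambda S^{-1}$ with $S\in\GL(l,\R)$ and $\Lambda=\diag(\lambda_1,\ldots,\lambda_l)$, $\lambda_i\in\R$, and substitute $w=S^{-1}u$, $\tilde B_1=S^{-1}B_1S$, $\tilde r=S^{-1}r$, $w_0=S^{-1}u_0$. Since $S$ and $S^{-1}$ are bounded isomorphisms of every $H^j(\R,\R^l)$ commuting with $\partial_x$ and $\partial_t$, the assertion for \eqref{equA:firstorder} is equivalent to the same assertion for the diagonal system $w_t=\Lambda w_x+\tilde B_1 w+\tilde r$, $w(\cdot,0)=w_0$, so it suffices to treat the diagonal case.

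For the diagonal system, existence and uniqueness of a mild solution come from the explicit group: the transport part $\Lambda\partial_x$ generates the $C_0$-group $(\Sigma(t)\phi)_i(x)=\phi_i(x+\lambda_i t)$, an isometry on each $H^j(\R,\R^l)$, and $\tilde B_1$ acts as a bounded operator on $H^j(\R,\R^l)$, so by the bounded perturbation theorem $\Lambda\partial_x+\tilde B_1$ generates a $C_0$-group $\mathcal{T}(\cdot)$ on each $H^j(\R,\R^l)$, consistently in $j$. The solution is
\[
  w(t)=\mathcal{T}(t)w_0+\int_0^t \mathcal{T}(t-s)\,\tilde r(s)\,ds,\qquad t\ge 0,
\]
which is well defined for $w_0\in H^k$ and $\tilde r\in\mathcal{CH}^{k-1}$; uniqueness within $\mathcal{CH}^k\subseteq\mathcal{CH}^1$ follows from the $L^2$ energy estimate $\tfrac{d}{dt}\|w\|_{L^2}^2\le C\|w\|_{L^2}^2$ applied to the difference of two solutions, which uses $\langle\varphi,\Lambda\varphi_x\rangle_{L^2}=\tfrac12\int_\R\partial_x(\varphi^\top\Lambda\varphi)\,dx=0$ for $\varphi\in H^1(\R,\R^l)$.

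For the $\mathcal{CH}^k$-regularity the decisive mechanism is that the PDE itself trades time for space derivatives: from $w_t=\Lambda w_x+\tilde B_1 w+\tilde r$ one gets $\partial_t^{p+1}w=\Lambda\partial_x\partial_t^p w+\tilde B_1\partial_t^p w+\partial_t^p\tilde r$, so once $w\in C([0,\infty),H^k)$ is known, an induction gives $w\in C^p([0,\infty),H^{k-p})$ for $p=0,\ldots,k$, using exactly that $\partial_t^p\tilde r\in C([0,\infty),H^{k-1-p})$ for $0\le p\le k-1$, which is the content of $\tilde r\in\mathcal{CH}^{k-1}$. The remaining statement $w\in C([0,\infty),H^k)$ is the core hyperbolic estimate: one differentiates the equation in $x$, runs the $L^2$ energy estimate for $\partial_x^\alpha w$ with $|\alpha|\le k$ (the transport term again contributing a perfect $x$-derivative that integrates to zero), integrates by parts in $t$ inside the Duhamel term to shift onto time derivatives those spatial derivatives of $\tilde r$ that are not directly available, applies Gronwall's inequality, and finally upgrades the a priori bound to genuine norm-continuity by approximating with smooth data. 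Transforming back via $u=Sw$ gives the claim.

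The hard part will not be any individual estimate but the bookkeeping in the $\mathcal{CH}$-scale — tracking which mixed derivatives $\partial_x^i\partial_t^j\tilde r$ are actually needed, justifying each integration by parts in $t$, and securing continuity (not merely boundedness) at the top level $H^k$. Since this is precisely the content of the linear first order theory, the cleanest route is to invoke \cite[Cor.~2.2.2]{Rauch12} directly after the diagonalization step.
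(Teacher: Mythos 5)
The paper does not actually prove this lemma: it is quoted from the linear theory of first order hyperbolic systems with the single reference to \cite[Cor.~2.2.2]{Rauch12}, so your decision to defer to that corollary after diagonalizing is exactly what the authors do, and your reconstruction of the underlying argument (constant diagonalization, the shift group plus bounded perturbation, Duhamel, $L^2$ energy uniqueness, and the induction that trades $\partial_t$ for $\partial_x$ through the equation) is the standard proof and is sound in outline.

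One point deserves to be made explicit, because your sketch silently uses it at the decisive step. The conclusion gains one spatial derivative over the source: $u\in C([0,\infty),H^k)$ while $r$ is only assumed to lie in $C([0,\infty),H^{k-1})$. Your mechanism for this gain is the integration by parts in $t$ inside the Duhamel integral, which in the diagonal case reads componentwise
\begin{equation*}
\int_0^t \partial_x \tilde r_i\bigl(x+\lambda_i(t-s),s\bigr)\,ds
=\frac{1}{\lambda_i}\Bigl(\tilde r_i(x+\lambda_i t,0)-\tilde r_i(x,t)
+\int_0^t \partial_t\tilde r_i\bigl(x+\lambda_i(t-s),s\bigr)\,ds\Bigr),
\end{equation*}
and this manifestly requires $\lambda_i\neq 0$. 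With a zero characteristic speed the lemma as literally stated fails: take $l=1$, $A_1=0$, $B_1=0$, $k=1$, $u_0=0$ and $r(x,t)=\rho(x)$ with $\rho\in L^2\setminus H^1$; then $r\in\mathcal{CH}^0$ but the unique solution $u(t)=t\rho$ never enters $H^1$. So you should either add invertibility of $A_1$ (equivalently, all $\lambda_i\neq 0$) as a hypothesis, or note that at the top spatial order one only obtains $H^{k-1}$. In the paper's applications $A_1=N$ has strictly positive eigenvalues, so the omission is harmless there, but a complete proof must state where nonvanishing of the characteristic speeds is used.
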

The second one concerns commuting weak and strong  derivatives
with respect to space and time.
\begin{lemma} \label{lemA:commutederiv}
For $u \in  C^1([0,\infty);H^1(\R,\R^l)))$ 
let $ \frac{d}{dt}u \in C^{0}([0,\infty);H^1(\R,\R^l))) $ be its time derivative 
and let $\frac{\partial}{\partial x}u(\cdot,t)$ be its
weak space derivative pointwise in $t \in [0,\infty)$.
Then $ \frac{\partial}{\partial x}u \in C^1([0,\infty);L^2(\R,\R^l))$ and
its time derivative agrees with the weak spatial derivative of $\frac{d}{dt}u$
evaluated pointwise in $t \in [0,\infty)$, i.e.
\begin{equation}\label{equA:weakstrong}
\frac{d}{dt}(\frac{\partial}{\partial x} u)  = \frac{\partial}{ \partial x}
(\frac{d}{dt}u).
\end{equation}  
\end{lemma}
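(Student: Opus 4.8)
The plan is to recognize that this lemma is a special case of the elementary principle that a bounded linear operator commutes with differentiation of Banach-space-valued curves. First I would record that principle precisely: if $X,Y$ are Banach spaces, $T\in\mathcal{L}(X,Y)$, $J\subseteq\R$ an interval and $g\in C^1(J;X)$, then $Tg\in C^1(J;Y)$ with $(Tg)'=Tg'$. This follows from the standard difference-quotient estimate, valid for $t_0\in J$ and small $h\neq 0$:
\[
\left\|\frac{(Tg)(t_0+h)-(Tg)(t_0)}{h}-Tg'(t_0)\right\|_Y=\left\|T\!\left(\frac{g(t_0+h)-g(t_0)}{h}-g'(t_0)\right)\right\|_Y\le\|T\|\left\|\frac{g(t_0+h)-g(t_0)}{h}-g'(t_0)\right\|_X\to 0,
\]
so $Tg$ is differentiable with derivative $Tg'$, and continuity of $t\mapsto Tg'(t)$ is immediate from continuity of $g'$ together with boundedness of $T$.

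Second, I would apply this with $X=H^1(\R,\R^l)$, $Y=L^2(\R,\R^l)$, $J=[0,\infty)$, $g=u$, and $T=\partial_x$ the weak spatial derivative operator. The only point to check is that $\partial_x\colon H^1(\R,\R^l)\to L^2(\R,\R^l)$ is a bounded linear operator: linearity is clear, and boundedness (with operator norm at most $1$) follows from $\|\partial_x w\|_{L^2}\le\|w\|_{H^1}$ for all $w\in H^1$, which is part of the definition of the $H^1$-norm. Since $u\in C^1([0,\infty);H^1(\R,\R^l))$ by hypothesis, the general fact yields $\partial_x u=Tu\in C^1([0,\infty);L^2(\R,\R^l))$ and $\frac{d}{dt}(\partial_x u)=T\frac{d}{dt}u=\partial_x(\frac{d}{dt}u)$ as elements of $L^2(\R,\R^l)$, which is exactly \eqref{equA:weakstrong}.

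Third, I would reconcile the two notions of spatial derivative appearing in the statement: for each fixed $t$ the value $(Tu)(t)\in L^2$ is by construction the weak $x$-derivative of $u(\cdot,t)\in H^1$, so it coincides with $\frac{\partial}{\partial x}u(\cdot,t)$ evaluated pointwise in $t$; and since $\frac{d}{dt}u$ takes values in $H^1$, the right-hand side of \eqref{equA:weakstrong} is likewise $T$ applied to $\frac{d}{dt}u(\cdot,t)$. There is no genuine obstacle in this lemma; the only thing requiring a moment of care is the bookkeeping between the pointwise-in-$t$ weak derivative in $x$ and the $L^2$-valued strong derivative in $t$, together with the observation that convergence of the difference quotients in the stronger $H^1$-norm automatically gives convergence in the weaker $L^2$-norm, which is precisely what legitimizes the interchange.
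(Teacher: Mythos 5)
Your proposal is correct and is essentially the paper's own argument: the paper proves the lemma by exactly the difference-quotient estimate $\|\frac{1}{h}(\partial_x u(\cdot,t+h)-\partial_x u(\cdot,t))-\partial_x(\frac{d}{dt}u(\cdot,t))\|_{L^2}\le\|\frac{1}{h}(u(\cdot,t+h)-u(\cdot,t))-\frac{d}{dt}u(\cdot,t)\|_{H^1}$, which is your boundedness-of-$T$ computation specialized to $T=\partial_x$. Packaging it as the general fact that a bounded linear operator commutes with differentiation of Banach-space-valued curves is a harmless (and tidy) abstraction of the same step.
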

\begin{proof}
Let $t,t+h \in [0,\infty)$ with $h \neq 0$ and note that
\begin{align*}
\| \frac{1}{h}(\frac{\partial}{\partial x}u(\cdot,t+h) - \frac{\partial}{\partial x} u(\cdot,t))
- \frac{\partial}{\partial x} (\frac{d}{dt}u(\cdot,t))\|_{L^2} \le 
\| \frac{1}{h}(u(\cdot,t+h)-u(\cdot,t)) - \frac{d}{dt}u(\cdot,t) \|_{H^1},
\end{align*}
 where the right-hand side converges to zero as $h \rightarrow 0$ by assumption.
Therefore, the derivative $\frac{d}{dt}(\frac{\partial}{\partial x} u)$ 
exists in $L^2(\R,\R^l)$ for all $t\in [0,\infty)$ and
coincides with $\frac{\partial}{ \partial x}
(\frac{d}{dt}u) \in C^0([0,\infty);L^2(\R,\R^l))$.
\end{proof}
\begin{remark} In a loose sense we may write \eqref{equA:weakstrong}
as commuting partial dervatives $u_{xt}=u_{tx}$. However, this equality
has to be interpreted with care since time and space derivatives are
taken with respect to different norms.
\end{remark}

We proceed with the proof of Theorem \ref{thm:4.17} by using the stability
statements from \eqref{equ:stabphase},\eqref{equ:stabfunc}.
From \eqref{equ:defvstar} and \eqref{equ:3.1c} we obtain
\begin{align} \label{equA:diffrep}
V_0 - V_{\star}= (u_0-v_{\star},v_0+\mu_{\star}v_{\star,\xi}+N(u_{0,\xi}-v_{\star,\xi}),
v_0+\mu_{\star}v_{\star,\xi}-N(u_{0,\xi}-v_{\star,\xi}) + c(u_0-v_{\star}))^{\top}.
\end{align}
Therefore, we have a constant $C_{\star}=C_{\star}(c,\|N\|)$ with  
\begin{align} \label{equA:estinitdata}
\|V_0 - V_{\star}\|_{H^2} \le C_{\star} (\|u_0-v_{\star}\|_{H^3} +
\|v_0+ \mu_{\star} v_{\star,\xi} \|_{H^2}) \le C_{\star} \rho,
\end{align}
and we take $\rho$ such that $C_{\star} \rho \le \rho_0$.
Let $V\in V_{\star} + \mathcal{CH}^1([0,\infty);\R^{3m})$ be the unique
solution of \eqref{equ:2.7sysA} for $\|V_0 - V_{\star}\|_{H^2}\le \rho_0$.
The first component $V_1$ satisfies
\begin{align} \label{equA:V3eq} 
V_{1,t}= N V_{1,x} - c V_1 + V_3, \quad V_1(\cdot,0)=u_0,
\end{align}
so that $\tilde{V}_1=V_1 -v_{\star}$ solves the Cauchy problem
\begin{align*}
\tilde{V}_{1,t}= N \tilde{V}_{1,x}-c \tilde{V}_1 + V_3-V_{\star,3} - \mu_{\star} v_{\star,x},\quad
\tilde{V}_1(\cdot,0)= u_0 -v_{\star}.
\end{align*}
Then Lemma \ref{LemmaA1} applies with $k=2, A_1=N, B_1=-cI_m$,
$r=V_3-V_{\star,3}-\mu_{\star}v_{\star,x}$ and yields $\tilde{V}_1 = V_1- v_{\star} \in \mathcal{CH}^2([0,\infty);\R^m)$. By Lemma \ref{lemA:commutederiv}
we obtain $\tilde{V}_{1,x} \in C^1([0,\infty);L^2(\R,\R^m))$ as well as
$\tilde{V}_{1,tx}=\tilde{V}_{1,xt} \in C^0([0,\infty);L^2(\R,\R^m))$. Since $v_{\star}$ does
not depend on $t$ we also have $V_{1,tx}=\tilde{V}_{1,tx}=\tilde{V}_{1,xt}=V_{1,xt}$.
For the same reason $\tilde{V}_{1,tt}=V_{1,tt} \in C^0([0,\infty);L^2(\R,\R^m))$, and
$\tilde{V}_{1,xx}=(V_{1}-v_{\star})_{xx} \in C^0([0,\infty);L^2(\R,\R^m))$ implies
$ V_{1,xx} \in C^0([0,\infty);L^2(\R,\R^m))$ since $v_{\star,xx}\in H^2(\R,\R^m)$
by Assumption \ref{ass:4.basic}.
 Thus we can take space and
time derivative of equation \eqref{equA:V3eq} and obtain from the third row
of  \eqref{equ:2.7sysA} 
\begin{equation}\label{equA:prewave}  
\begin{aligned} 
\tilde{f}(V)= & V_{3,t}+NV_{3,x} -c V_2 \\
= & V_{1,tt}-N^2V_{1,xx} - N V_{1,xt} + cV_{1,t} + N V_{1,tx} + cN V_{1,x}-cV_2\\
= & V_{1,tt}-N^2V_{1,xx} -c (V_2 -V_{1,t}-N V_{1,x}).
\end{aligned}
\end{equation}
Next introduce the functions
\begin{equation} \label{equA:Wdef}
W_2=V_2-V_{1,t}-NV_{1,x}, \quad W_3 = V_3 -V_{1,t} +NV_{1,x} -cV_1.
\end{equation}
Using \eqref{equA:prewave}, the last two rows of \eqref{equ:2.7sysA}  and Lemma \ref{lemA:commutederiv} again,
these functions solve the hyperbolic system
\begin{equation} \label{equA:Wsyst}
\begin{aligned} 
W_{2,t} - N W_{2,x} = & V_{2,t}-V_{1,tt} -NV_{1,xt} -NV_{2,x} +NV_{1,tx} + N^2 V_{1,xx}
=-cW_2 \\
W_{3,t}+NW_{3,x} = & V_{3,t}+NV_{3,x} -V_{1,tt}-N V_{1,tx}  +N V_{1,xt}+ N^2 V_{1,xx}
-cV_{1,t}-cNV_{1,x}\\
= & \tilde{f}(V)+cV_2 -(\tilde{f}(V)+c W_2)-c(V_{1,t}+NV_{1,x})=0.
\end{aligned}
\end{equation}
Using from \eqref{equ:2.7sysA} the initial data  and the differential equation
at $t=0$  one finds that 
$W_2(\cdot,0)=0$, $W_3(\cdot,0)=0$. Since \eqref{equA:Wsyst}
with homogeneous initial data has only the trivial solution we conclude
$W_2\equiv0, W_3\equiv 0$. Therefore, by setting $u=V_1$,  equations \eqref{equA:prewave} and \eqref{equA:Wdef} finally lead to
\begin{align*}
u_{tt}-N^2 u_{xx} = \tilde{f}(V)
=  M^{-1}f(V_1,\frac{1}{2}N^{-1}(V_2-V_3+cV_1),\frac{1}{2}(V_2+V_3-cV_1))
=M^{-1}f(u,u_x,u_t).
\end{align*}
Applying \eqref{equ:stabphase} and using \eqref{equA:estinitdata} we obtain that the asymptotic phase $\varphi_{\infty}$  satisfies an estimate
\begin{align*}
|\varphi_{\infty}| \le C \|V_0-V_{\star}\|_{H^2} \le C C_{\star}
(\|u_0-v_{\star}\|_{H^3} +
\|v_0+ \mu_{\star} v_{\star,x} \|_{H^2}).
\end{align*}
 Further, we have for $t\ge 0$ the stability estimate
\begin{align*}
\|V(\cdot,t)-V_{\star}(\cdot-\mu_{\star}t -\varphi_{\infty})\|_{H^1}
\le C C_{\star} (\|u_0-v_{\star}\|_{H^3} +
\|v_0+ \mu_{\star} v_{\star,x} \|_{H^2}) e^{-\eta t},
\end{align*}
where $C$ depends only on $\eta,\rho$.
From this we retrieve the estimate \eqref{equ:3.10a} for the original
variables by taking the $H^1$-norm of the equation
\begin{align} \label{equA:transest}
V(\cdot,t) - V_{\star}(\cdot - \mu_{\star}t - \varphi_{\infty})=
\begin{pmatrix} I_m & 0 & 0 \\ 0 & N & I_m \\ cI_m & -N & I_m \end{pmatrix}
\begin{pmatrix} u(\cdot,t) -v_{\star}(\cdot-\mu_{\star}t -\varphi_{\infty}) \\ 
                u_x(\cdot,t) -v_{\star,x}(\cdot-\mu_{\star}t-\varphi_{\infty})\\ 
u_t(\cdot,t)+ \mu_{\star}v_{\star,x}(\cdot-\mu_{\star}t-\varphi_{\infty})
\end{pmatrix}
\end{align} 
and using that the left factor of the right-hand side is invertible.
\subsection{Lyapunov stability of the freezing method}
\label{subsec:3.4}
Let us first recall from \cite[Thm.2.7]{RottmannMatthes2012a}  the stability
theorem for the freezing method
associated with the first order formulation \eqref{equ:diaghyp}
\begin{subequations} 
  \label{equA:diagfreeze}
  \begin{align}
    & W_t= \Lambda_E W_x + G(W) + \mu W_x,
    \quad\,x\in\R,\,t\ge 0, W(x,t) \in \R^l \label{equA:freezedgl} \\
    & W(\cdot,0) = W_0, \label{equA:freezeini}\\
    & \Psi(W-\hat{W})=0. \label{equA:freezephase}
  \end{align}
\end{subequations}
Here, $\Psi:L^2(\R,\R^l)\rightarrow \R$ is a linear functional
and $\hat{W}:\R \rightarrow \R^l$ is a template function for which we
assume
\begin{enumerate}[label=\textup{(\roman*)},leftmargin=0.65cm]
  \setlength{\itemsep}{0cm}
  \setcounter{enumi}{7}
  \item \label{itm:3.g} $\Psi(W_{\star,\xi}) \neq 0$, $\Psi$ is bounded,
  \item \label{itm:3.h} $\hat{W} \in W_{\star}+ H^1(\R,\R^l)$ and
$\Psi(W_{\star}- \hat{W}) = 0$.
\end{enumerate}
Under the combined assumptions of \ref{itm:3.a}-\ref{itm:3.f} and
\ref{itm:3.g}, \ref{itm:3.h} the result is the following. For
every $0 < \eta < \delta$ there exists $\rho_0 >0$ such that for all
initial data $W_0 \in W_{\star} + H^2(\R,\R^l)$ with $\|W_0 - W_{\star}\|_{H^2}
\le \rho_0$ the system \eqref{equA:diagfreeze} has a unique solution $(W,\mu)$ 
in $\Big(W_{\star}+ \mathcal{CH}^1([0,\infty); \R^{3m}) \Big) \times C([0,\infty),\R)$.
Moreover, there is a constant $C= C(\eta)$ such that the solution
satisfies
\begin{equation} \label{equA:estsolpdae}
\|W(t) - W_{\star}\|_{H^1} + | \mu(t) -\mu_{\star}| \le
C(\eta) \|W_0 - W_{\star} \|_{H^2} e^{- \eta t}, \quad t \ge 0.
\end{equation}
We apply this to the frozen version of  \eqref{equ:2.4N} with the
functional $\Psi$ and the function $\hat{W}$ defined by
\begin{equation} \label{equA:deftemplate}
\begin{aligned} 
\hat{V}= & \begin{pmatrix} \hat{v} & 0 & 0 \end{pmatrix}^{\top},
\hat{W}= T^{-1}\hat{V}, \\
\Psi(W-\hat{W}) = & \langle T(W-\hat{W}), T \hat{W}_{\xi} \rangle_{L^2}.
\end{aligned}
\end{equation}

While conditions \ref{itm:3.a}-\ref{itm:3.f} have already been verified,
the conditions \ref{itm:3.g} and \ref{itm:3.h} easily follow from Assumption
\ref{ass:3.template} and the settings $W_{\star}= T^{-1}V_{\star}$ and
\eqref{equ:defvstar}.  Thus the above result applies. By $(W,\mu)$ we
denote the unique solution of \eqref{equA:diagfreeze} for
$\|W_0-W_{\star}\|_{H^2}\le \rho_0$, and we let $(V=TW,\mu)$ be the unique solution
in $\Big(V_{\star}+ \mathcal{CH}^1([0,\infty); \R^{3m}) \Big) \times C([0,\infty),\R)$ of the transformed equation
\begin{subequations} 
  \label{equA:nondiagfreeze}
  \begin{align}
    & V_t= E V_{\xi} + F(V) + \mu V_{\xi},
    \quad\,\xi \in\R,\,t\ge 0,  \label{equA:freezenondgl} \\
    & V(\cdot,0) = V_0, \label{equA:freezenonini}\\
    & \langle V_1 - \hat{v}, \hat{v}_{\xi} \rangle_{L^2}=0. \label{equA:freezenonphase}
  \end{align}
\end{subequations}
We impose two
conditions on the radius $\rho$ appearing in \eqref{equ:3.17}.
The first one is $C_{\star}\rho \le \rho_0$ as in the argument following
\eqref{equA:estinitdata}. The second one is to ensure for some constant $\underline{C}>0$
\begin{equation} \label{equA:phaseuniform}
|\langle V_{1,\xi}(\cdot,t),\hat{v}_{\xi} \rangle_{L^2}| \ge \underline{C},\quad
\forall t \ge 0
\end{equation}
for all solutions satisfying \eqref{equ:3.17}. 
In fact, from \eqref{equA:estsolpdae}, \eqref{equA:estinitdata} and
Assumption \ref{ass:3.template} we obtain
\begin{align*}
| \langle V_{1,\xi}(\cdot,t),\hat{v}_{\xi}\rangle_{L^2}| \ge & | \langle v_{\star,\xi},\hat{v}_{\xi}\rangle_{L^2}| - \|V_{1}(\cdot,t)-v_{\star}\|_{H^1} \|\hat{v}_{\xi}\|_{L^2}\\
\ge &| \langle v_{\star,\xi},\hat{v}_{\xi}\rangle_{L^2}| - 
 C(\eta) e^{-\eta t} \|T\| \|W_0-W_{\star}\|_{H^2}  \|\hat{v}_{\xi}\|_{L^2}\\
\ge & | \langle v_{\star,\xi},\hat{v}_{\xi}\rangle_{L^2}| - 
\rho  C(\eta) \|T\| \|T^{-1}\| C_{\star} \|\hat{v}_{\xi}\|_{L^2}.
\end{align*}
Our next step is to prove regularity of the solution in the sense that
\begin{align} \label{equA:regularity} 
V_1 \in v_{\star}+ \mathcal{CH}^2([0,\infty); \R^{m}) , \quad
\mu \in  C^1([0,\infty),\R).
\end{align}
For this we define $\gamma \in C^1([0,\infty),\R)$ by $\gamma(t)=\int_0^t \mu(s)ds$ and return to the original variables via 
$U(x,t):=V(x-\gamma(t),t)$ for $x\in \R,t\ge 0$. Then we have that
$U \in V_{\star} + \mathcal{CH}^1([0,\infty);\R^{3m})$ solves the first order
system \eqref{equ:2.7sysA}. Hence the regularity $U_1 \in v_{\star} + \mathcal{CH}^2([0,\infty);\R^{m})$ is obtained via Lemma \ref{LemmaA1}
 by the same arguments as those following \eqref{equA:V3eq}.
In particular, $U_{1,x} \in \mathcal{CH}^1([0,\infty);\R^m)$ and thus
$V_{1,\xi} \in \mathcal{CH}^1([0,\infty);\R^m)$ since $V_{1,\xi}(\cdot,t)=
U_{1,x}(\cdot+\gamma(t),t)$ and $\gamma\in C^1([0,\infty),\R)$. 
For the smoothness of $\mu$ we differentiate the phase
condition \eqref{equA:freezenonphase} with respect to $t$ and use 
\eqref{equA:freezenondgl}
\begin{align*}
0=&\langle V_{1,t},\hat{v}_{\xi} \rangle_{L^2} = \langle
NV_{1,\xi} -cV_1 +V_3,\hat{v}_{\xi} \rangle_{L^2} +
\mu \langle V_{1,\xi},\hat{v}_{\xi} \rangle_{L^2}.
\end{align*}
By \eqref{equA:phaseuniform} this can be solved for $\mu$ and yields
$\mu \in C^1([0,\infty),\R)$ since the other terms are known to be $C^1$-smooth. Thus we have $\gamma\in C^2([0,\infty),\R)$ and then finally
$V_1 \in v_{\star}+ \mathcal{CH}^2([0,\infty); \R^{m})$ from
$V(\xi,t)=U(\xi+\gamma(t),t)$ and 
$U_1 \in v_{\star} + \mathcal{CH}^2([0,\infty);\R^{m})$.

Retrieving the frozen second order equation \eqref{equ:3.2} now uses
 the same arguments as in the nonfrozen case. Therefore we only indicate
the revised equations and leave out computations.
Equation \eqref{equA:prewave} is replaced by
\begin{align} \label{equA:prefrozen}
\tilde{f}(V) = V_{1,tt}-N^2 V_{1,\xi\xi}+\mu^2 V_{1,\xi\xi} - 2 \mu V_{1,t\xi} - \mu_t 
V_{1,\xi} + c(V_{1,t}-V_2+(N-\mu I_m)  V_{1,\xi}).
\end{align}
In view of \eqref{equ:3trans} the analogous functions of \eqref{equA:Wdef}
are defined as follows
\begin{equation} \label{equA:Wnewdef}
W_2=V_2-V_{1,t}-(N-\mu I_m)V_{1,\xi}, \quad W_3 = V_3 -V_{1,t} +(N+ \mu I_m)V_{1,\xi} -cV_1.
\end{equation}
They solve the hyperbolic homogeneous Cauchy problem
\begin{align*}
W_{2,t}-(N+\mu I_m) W_{2,\xi} =& -c W_2, \quad W_2(\cdot,0)=0, \\
W_{3,t} +(N- \mu I_m)W_{3,\xi} = & 0, \quad W_3(\cdot,0)=0,
\end{align*}
hence vanish identically. Inserting this in \eqref{equA:prefrozen}
shows that $v = V_1 \in v_{\star} + \mathcal{CH}^2([0,\infty);\R^m)$
and $\mu$ solve the frozen second order system \eqref{equ:3.2}.

Concerning the estimate \eqref{equ:3.18}, we note the following relation which
replaces \eqref{equA:transest}
\begin{align} \label{equA:transestfreeze}
V(\cdot,t) - V_{\star}=
\begin{pmatrix} I_m & 0 & 0 \\ 0 & N & I_m \\ cI_m & -N & I_m \end{pmatrix}
\begin{pmatrix} v(\cdot,t) -v_{\star} \\ 
                v_{\xi}(\cdot,t) -v_{\star,\xi}\\ 
v_t(\cdot,t)+ \mu_{\star}v_{\star,\xi} - \mu(t) v_{\xi}(\cdot,t)
\end{pmatrix}.
\end{align}
Taking the $H^1$-norm of this equation and using the estimate
\eqref{equA:estsolpdae} with $V,V_{\star},V_0$ instead of $W,W_{\star},W_0$
then gives the exponential estimate in \eqref{equ:3.18}
for $\|v(\cdot,t)-v_{\star}\|_{H^2}$, $|\mu- \mu_{\star}|$ and
  $\|v_t(\cdot,t)+ \mu_{\star}v_{\star,\xi} - \mu(t) v_{\xi}(\cdot,t)\|_{H^1}$.
Using the estimates for the first two terms and the triangle inequality
on the last term then yields an exponential estimate for
$\|v_t(\cdot,t)\|_{H^1}$. This finishes the proof. 


\def\cprime{$'$}

\end{document}